\renewcommand{\P}{\mathbf P}
\newcommand{\Prob}[1]{\P\left\{#1\right\}}
\newcommand{\E}{\mathbf E}
\newcommand{\R}{\mathbb R}
\newcommand{\NN}{\mathbb N}
\newcommand{\bMh}{\widehat{\mathbf M}}
\newcommand{\bQh}{\widehat{\mathbf Q}}
\newcommand{\salg}{\mathfrak{F}}
\newcommand{\leb}{\mathrm{Leb}}
\newcommand{\sA}{\mathcal{A}}
\newcommand{\sG}{\mathcal{G}}
\newcommand{\sX}{\mathcal{X}}
\newcommand{\sP}{\mathcal{P}}
\newcommand{\Borel}{\mathcal{B}}
\newcommand{\one}{\mathbbm{1}}
\newcommand{\dsim}{\overset{d}{=}}
\newcommand{\fod}{\overset{{\rm f.d.}}{=}}
\newcommand{\toas}{\overset{{\rm a.s.}}{\longrightarrow}}
\DeclareMathOperator{\Id}{Id}
\DeclareMathOperator{\diff}{d}
\renewcommand{\epsilon}{\varepsilon}
\newcommand{\eps}{\epsilon}
\theoremstyle{plain} \newtheorem{theorem}{Theorem}[section]
\theoremstyle{plain} \newtheorem{proposition}[theorem]{Proposition}
\theoremstyle{plain} \newtheorem{lemma}[theorem]{Lemma}
\theoremstyle{plain} \newtheorem{corollary}[theorem]{Corollary}
\theoremstyle{definition} 
\theoremstyle{definition} 
\theoremstyle{definition} \newtheorem{conj}[theorem]{Conjecture}
\theoremstyle{remark} \newtheorem{remark}[theorem]{Remark}
\theoremstyle{remark} \newtheorem{example}[theorem]{Example}
\begin{document}

\begin{frontmatter}

\title{Sieving random iterative function systems}
\runtitle{Sieving random iterative function systems}

\begin{aug}

\author{\fnms{Alexander} \snm{Marynych}\thanksref{a1,e1}\ead[label=e1,mark]{marynych@unicyb.kiev.ua}}
\and
\author{\fnms{Ilya} \snm{Molchanov}\thanksref{a2,e2}\ead[label=e2,mark]{ilya.molchanov@stat.unibe.ch}}

\address[a1]{Faculty of Computer Science and Cybernetics\\ 
Taras Shevchenko National University of Kyiv\\ 
Volodymyrska 60\\
01601 Kyiv, Ukraine\\
\printead{e1}}

\address[a2]{Institute of Mathematical Statistics\\
and Actuarial Science\\
University of Bern\\
Alpeneggstrasse 22\\
CH-3012 Bern, Switzerland\\
\printead{e2}}

\runauthor{A. Marynych and I. Molchanov}

\affiliation{Taras Shevchenko National University of Kyiv\thanksmark{m1} and University of Bern\thanksmark{m2}}

\end{aug}

\begin{abstract}
It is known that backward iterations of independent copies of a
  contractive random Lipschitz function converge almost surely under
  mild assumptions. By a sieving (or thinning) procedure based on
  adding to the functions time and space components, it is possible to
  construct a scale invariant stochastic process. We study its
  distribution and paths properties. In particular, we show that it is
  c\`adl\`ag and has finite total variation. We also provide examples
  and analyse various properties of particular sieved iterative
  function systems including perpetuities and infinite Bernoulli
  convolutions, iterations of maximum, and random continued
  fractions.
\end{abstract}

\begin{keyword}[class=MSC]
\kwd[Primary ]{26A18, 60G18}
\kwd[; secondary ]{37C40, 60H25, 60G55}
\end{keyword}

\begin{keyword}
\kwd{infinite Bernoulli convolutions}
\kwd{iteration}
\kwd{perpetuity}
\kwd{random Lipschitz function}
\kwd{scale invariant process}
\kwd{sieving}
\kwd{thinning}
\end{keyword}

\end{frontmatter}

\section{Introduction}
\label{sec:introduction}

Iteration is one of fundamental tools in mathematics going back to
famous fixed point theorems for contractive mappings. In probabilistic
setting, one often works with iterated independent identically
distributed (i.i.d.) Lipschitz functions $(f_i)_{i\in\NN}$ defined on
a complete separable metric space and study the convergence of either
backward $f_1(f_2(f_3(\cdots f_n(\cdot))))$ or forward
$f_n(f_{n-1}(f_{n-2}(\cdots f_1(\cdot))))$ iterations as
$n\to\infty$. An incomplete list of early works on random iterations
include \cite{Chamayou+Letac:1991,deGroot+Rao:1963, Dubins+Freedman:1967,Duflo:1997,Letac:1986} and references therein. A comprehensive study of convergence regimes for
contractive (a precise definition will be given below) iterated random functions goes back to the prominent paper by Diaconis and Freedman \cite{diac:freed99}.

An important special case of iterated random affine mappings (called stochastic perpetuities) was studied in
\cite{Alsmeyer+Iksanov+Roesler:2008,gol:mal00,Grincevicius:1980,kes73}
and in many other works. The recent books \cite{bur:dam:mik16,Iksanov:2016} provide more comprehensive lists of further references. A particular instance of perpetuities, infinite Bernoulli convolutions, have been attracting enormous attention since 1930th,
see for example
\cite{Akiyama_et_al:2018,Erdoes:1939,Erdoes:1940,Peres+Schlag+Solomyak:2000,Solomyak:1995,Varju:2019}. 

We recall the main setting, restricting ourselves to the case of
Lipschitz functions on the real line $\R$. Let $\sG$ be the space of
Lipschitz functions $f:\R\mapsto\R$ endowed with the usual Lipschitz
norm $\|f\|_{\mathrm{Lip}}:=|f(0)|+L_f$, where
\begin{displaymath}
  L_f:=\sup_{x,y\in\R,x\neq y}\frac{|f(y)-f(x)|}{|x-y|}
\end{displaymath}
is the Lipschitz constant of $f\in\sG$.  The composition of functions
$f\circ g$ defined by $(f\circ g)(x):=f(g(x))$ for $x\in\R$ endows
$\sG$ with the semigroup structure and is continuous with respect to
$\|\cdot\|_{\mathrm{Lip}}$. 

Equip $\sG$ with a probability measure $\nu$ on the Borel
$\sigma$-algebra of $\sG$. Since the composition operation is
continuous, the composition of two $\sG$-measurable functions is again
$\sG$-measurable. If $f$ is a random function with distribution $\nu$
such that
\begin{equation}
  \label{eq:cond1}
  K_f:=\E L_{f}=\int_{\sG}L_{f}\diff \nu(f)<\infty,\quad
  \E \log L_{f}=\int_{\sG}\log L_{f}\diff \nu(f)<0,
\end{equation}
and
\begin{equation}
  \label{eq:cond2}
  \E |f(z_0)-z_0|=\int_{\sG}|f(z_0)-z_0|\diff \nu(f)<\infty
\end{equation}
for some $z_0\in\R$, then the sequence of backward iterations 
\begin{equation}
  \label{eq:11}
  Z_n:=f_{1}\circ\cdots\circ f_n(z_0)
\end{equation}
converges almost surely as $n\to\infty$ and the limit $Z_{\infty}$
does not depend on the choice of $z_0$, see Theorem~1 and
Proposition~1 in \cite{diac:freed99}. From this, one deduces that the
sequence of forward iterations $f_n \circ\cdots\circ f_1(z_0)$
converges in distribution to $Z_{\infty}$, see Theorems~1.1 and~5.1 in
\cite{diac:freed99}. Furthermore, the limiting random variable
$Z_{\infty}$ satisfies the stochastic fixed-point equation
\begin{equation}
  \label{eq:fixed-point}
  Z_{\infty}\dsim f(Z_{\infty}),
\end{equation}
where $f$ and $Z_{\infty}$ on the right-hand side are independent.

Many important distributions appear as limits for random iterated
functions. This work aims to extend this construction in order to come
up with stochastic processes (in general, set-indexed) whose
univariate distributions arise from iterations and joint distributions
are related by leaving some iterations out. For instance, assume that each 
of the functions $f_i$ is associated with
a uniformly distributed random variable $U_i$ and is deleted from the
iteration chain in \eqref{eq:11} if $U_i$ exceeds a given number
$x$. The limit of such iterations is a random variable $\zeta(x)$
whose distribution is the same as that of $Z_\infty$. The properties
of $\zeta(x)$ considered a random function of $x$ is the main subject
of this paper.

As a preparation to a general construction of such stochastic
processes presented in Section~\ref{sec:iterated-functions} we shall provide a few
examples.

\begin{example}\label{ex:bernoulli_convolutions}
  Consider an infinite sequence $(Q_n)_{n\in\NN}$ of independent
  copies of a random variable $Q$ taking values $0$ or $1$ equally
  likely. For $\lambda\in(0,1)$, the Bernoulli convolution
  \begin{displaymath}
    Z_\infty:=\sum_{n=1}^\infty \lambda^{n-1} Q_n
  \end{displaymath}
  results from the backward iteration of independent copies of the
  function $f(z)=\lambda z+Q$. Now consider a sequence
  $(U_n)_{n\in\NN}$ of i.i.d.~uniform random variables on $[0, 1]$ which
  is independent of $(Q_n)_{n\in\NN}$. Put
  $T_k(x):=\sum_{j=1}^{k}\one_{\{U_j\leq x\}}$, $k\in\NN$,
  $x\in(0,1]$, where $\one_{\{x_i\in A\}}$ is the indicator of the
  event $\{x_i\in A\}$, and further
  $S_n(x):=\inf\{k\in\NN:T_k(x)=n\}$, $n\in\NN$, $x\in(0,1]$. Let
  \begin{displaymath}
    \zeta(x):=\sum_{n=1}^\infty \lambda^{n-1} Q_{S_n(x)},\quad x\in(0,1].
  \end{displaymath}
  This yields a stochastic process, whose univariate marginals are all
  distributed like $Z_\infty$. We will explore path properties of this
  process, show that for $\lambda\in(0,1/2]$ it is Markov in both
  forward and reverse time and find its generating operator. It is well
  known that if $\lambda=1/2$, then $\zeta(x)$ is uniformly
  distributed on $[0,2]$ for every $x\in(0,1]$. We show that the
  bivariate distributions are singular for some $x$ close enough to $1$,
  determine a bound on their Hausdorff dimension and calculate the local
  dimension on the set of binary rational points.
\end{example}

\begin{example}
  \label{ex:perpetuities}
  Generalising the previous example, consider a sequence
  $(Z_n)_{n\in\NN}$ of backward iterations of affine mappings
  $f_n(x)=M_n x+Q_n$, $n\in\NN$, applied to the initial point $z_0=0$,
  where $(M_n,Q_n)_{n\in\NN}$ are i.i.d.~random vectors in $\R^2$. A
  criterion for a.s.~convergence of $(Z_{n})$ is known, see
  \cite[Th.~2.1]{gol:mal00}. In particular, by
  \cite[Cor.~4.1]{gol:mal00} convergence takes place whenever
  $\E\log|M|\in(-\infty,0)$, $\E\log^+|Q|<\infty$, where $\log^{+}x:=\log(x\vee 1)$, and an additional
  nondegeneracy assumption, see formula \eqref{eq:perp_non_degenerate}
  below, holds.  Let $(U_n)_{n\in\NN}$ be i.i.d.~uniform random variables on $[0, 1]$ 
  which are independent of $(M_n,Q_n)_{n\in\NN}$. Consider a coupled family of processes
  \begin{displaymath}
    \zeta(x):=\sum_{n=1}^{\infty} \left(\prod_{k=1}^{n-1}
      M_k^{\one_{\{U_k\leq x\}}}\right)Q_n\one_{\{U_n\leq x\}}, \quad x\in(0,1].
  \end{displaymath}
  We establish the uniform convergence of partial sums of the above
  series to the limit $\zeta(x)$ and explore its path properties.
\end{example}

\begin{example}
  Consider a continued fraction $W_{n}=\frac{1}{W_{n-1}+a_{n}}$ with
  (possibly i.i.d.~random) coefficients $a_n>0$, $n\in\NN$. If $\sum
  a_n=\infty$ a.s., the continued fraction converges in distribution by the Stern--Stolz theorem, see Theorem 10 in 
  \cite{khinchincontinued}. Given once again a sequence $(U_n)_{n\in\NN}$
  of i.i.d.~uniform random variables on $[0, 1]$ which is independent of
  $(a_n)_{n\in\NN}$, we modify the continued fraction by letting
  \begin{displaymath}
    W_{n}(x)=\begin{cases}
      \frac{1}{W_{n-1}(x)+a_n}, &\text{if } U_n\leq x,\\
      W_{n-1}(x), &\text{if } U_n>x,
    \end{cases}
    \quad x\in(0,1].
  \end{displaymath}
  Note that for every fixed $x\in (0,1]$, $W_n(x)$ is the forward
  iteration of the mappings
  $$
  f_{n,x}(z)=\frac{1}{a_n+z}\one_{\{U_n\leq x\}}+z\one_{\{U_n>x\}},\quad z>0.
  $$
  The a.s.~pointwise limits of the corresponding backward iterations
  $Z_n(x)$ as $n\to\infty$ constitute a stochastic process on
  $(0,1]$. We show that this process has a finite total variation, and
  is Markov if $(a_n)_{n\in\NN}\subset \NN$. 
\end{example}

Note that in all above examples we eliminate some iterations from the
infinite sequence
$$
Z_{\infty}=f_1\circ f_2\circ\cdots\circ f_n\circ\cdots
$$
by replacing the corresponding functions with the identity mapping in
a coupled manner. In Section~\ref{sec:iterated-functions}, we suggest
a sieving scheme for iterated functions, which is generated by an
auxiliary Poisson point process. As a result, we are led to a set-indexed stochastic process whose univariate marginals are all the same and are distributed as the almost sure limit of $Z_n$ in
\eqref{eq:11}. By taking its values on the segments $[x,1]$ with
$x\in(0,1]$, we obtain all constructions mentioned in the above
examples as special cases. 

The distributional properties of the set-indexed process are analysed
in Section~\ref{sec:distr-prop-zetacd}, in particular, it is shown
that a variant of this process on the half-line is scale invariant. By restricting the process to a finite interval, it is possible to
rephrase our sieving scheme as iteration in a functional space. With
this idea, in Section~\ref{sec:iter-finite-interv} we use tools from
the theory of empirical processes to establish the uniform convergence on
some classes of sets. In Section~\ref{sec:cont-prop-zeta} it is shown
that the limiting process $\zeta$ is c\`adl\`ag and has a finite total
variation on any bounded interval separated from zero. We also discuss
integration with respect to $\zeta$ and integrability properties of
$\zeta$. Section~\ref{sec:markov-property} establishes the
Markov property of the process, assuming a kind of a strong
separation condition known in fractal geometry.

The most well-studied family of iterations are perpetuities, also
known as autoregressive processes of the first order, see
Example~\ref{ex:perpetuities} above. The sieving scheme is applied to
them in Section~\ref{sec:perpetuities}, where also an important
example of Bernoulli convolutions is considered, see
Example~\ref{ex:bernoulli_convolutions}. Section~\ref{sec:other-examples}
outlines several other instances of iterations that provide new
examples of self-similar stochastic processes. In the Appendix we collect some 
technical proofs and provide several further examples related to perpetuities.

\section{Sieving scheme for iterated functions}
\label{sec:iterated-functions}

Let $\sX$ be a complete separable metric space with its Borel
$\sigma$-algebra $\Borel(\sX)$ and equipped with a $\sigma$-finite
measure $\mu$. Recall that $\sG$ is the family of Lipschitz functions
on the real line with a probability measure $\nu$ satisfying
  \eqref{eq:cond1} and \eqref{eq:cond2}.

Let $\R_+:=[0,\infty)$ be the positive half-line with the Lebesgue
measure $\leb$. Consider a Poisson process $\sP$ on
$\R_+\times\sX\times\sG$ with intensity measure
$\leb\otimes\mu\otimes\nu$. Note that in a triplet $(t,x,f)\in\sP$
the function $f$ may be considered as a mark of the point $(t,x)$, the
marks of different points are independent and $\nu$ is the probability
distribution of the typical mark denoted by $f$. For
$A\in\Borel(\sX)$, denote by $\sP_A$ the intersection of $\sP$ with
$\R_+\times A\times\sG$.

For a sequence $(f_n)_{n\in\NN}$ of i.i.d.~random Lipschitz functions,
write
\begin{displaymath}
  f^{k\uparrow n}=f_k\circ\cdots\circ f_n,
\end{displaymath}
and $f^{k\uparrow \infty}$ for the almost sure limit of these
iterations as $n\to\infty$ provided it exists.  For $k>n$ we stipulate
that $f^{k\uparrow n}$ is the identity function $\Id$.

For each $A\in\Borel(\sX)$ with $\mu(A)\in(0,\infty)$, enumerate the
points $\{(t_{k,A},x_{k,A},f_{k,A}):\; x_{k,A}\in A,\; k\geq1\}$ of $\sP_A$,
so that the first component is a.s.~increasing, and define the
\emph{sieved backward iterations} of $(f_{k,A})_{k\in\NN}$:
\begin{equation}
  \label{eq:1}
  \zeta_t(A):=f_A^{1\uparrow N_A(t)}(z_0)=f_{1,A}\circ\cdots \circ f_{N_A(t),A}(z_0),\quad t > 0,
\end{equation}
where $z_0\in\R$ is fixed and nonrandom, and
\begin{displaymath}
  N_A(t):=\sup\{k\geq 1:t_{k,A}\leq t,x_{k,A}\in A\}
\end{displaymath}
with the convention $\sup\varnothing=0$. Thus, $\zeta_t(A)$ is a
finite backward composition of marks $f_k$ for $(t_k,x_k)$ from the
rectangle $[0,t]\times A$.  Equivalently, $\zeta_t(A)$ is the
composition of functions $f_i\one_{\{x_i\in A\}}+\Id \one_{\{x_i\notin
  A\}}$ for $t_i\leq t$ applied to the starting point $z_0$. This equivalent interpretation makes transparent the ``sieved'' structure of $\zeta_t(A)$ which might seem disguised in the definition \eqref{eq:1}.

In what follows we always assume that conditions \eqref{eq:cond1} and
\eqref{eq:cond2} are in force.  Then $\zeta_t(A)$ in (\ref{eq:1})
converges almost surely as $t\to\infty$. The limiting random element
is denoted by $\zeta(A)$; it is a random set-indexed function
defined on
\begin{displaymath}
  \Borel_{+}(\sX):=\{A\in\Borel(\sX):\;\mu(A)\in(0,\infty)\}.
\end{displaymath}
Furthermore, $\zeta(A)$ does not depend on the choice of $z_0$. 

If $\sX$ is the half-line $\R_+=[0,\infty)$ and $\mu$ is the Lebesgue
measure, then we work with a Poisson process on
$\R_+\times\R_+\times\sG$, and, for $A=[0,x]$ with $x>0$, the random
variable $\zeta_t(A)$ is the result of iterating the functions $f_i$
ordered according to $t_i\leq t$ and such that $x_i\leq x$. In this case, we
write $\zeta(x)$ as a shorthand for $\zeta([0,x])$, $x > 0$, and
regard $(\zeta(x))_{x > 0}$ as a stochastic process on
$(0,\infty)$. Note that by passing from $\zeta(x)$ to $\zeta(y)$, we sieve some iterations out
if $y<x$ and insert additional ones if $y>x$.

\section{Distributional properties}
\label{sec:distr-prop-zetacd}

\subsection{Finite-dimensional distributions and scale invariance}

Recall that $\zeta(A)$ is defined for $A\in\Borel_+(\sX)$, that is,
for Borel $A$ such that $\mu(A)\in(0,\infty)$.  Note the following
simple facts.

\begin{proposition}
  \label{prop:same-distr}
  The distribution of $\zeta(A)$ does not depend on
  $A\in\Borel_+(\sX)$ and $\zeta(A)\dsim Z_{\infty}$. If $\P\{Z_{\infty}=0\}<1$, the set
  function $\zeta$ is not additive, and hence is not a measure on
  $\sX$. If $A_1\cap A_2=\varnothing$ for $A_1,A_2\in\Borel_+(\sX)$,
  then $\zeta(A_1)$ and $\zeta(A_2)$ are independent.
\end{proposition}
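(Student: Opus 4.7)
The plan is to address the three assertions separately, each reducing to a standard Poisson-process fact plus a short extra argument.

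For the equidistribution $\zeta(A)\dsim Z_{\infty}$, I would start from the observation that $\sP_A$ is itself a Poisson point process on $\R_{+}\times A\times\sG$ with intensity $\leb\otimes(\mu|_A)\otimes\nu$. Its projection onto the time axis is a Poisson process on $\R_{+}$ of finite positive rate $\mu(A)$, so the marking theorem furnishes the enumeration $\{(t_{k,A},x_{k,A},f_{k,A})\}_{k\geq 1}$ with marks $(x_{k,A},f_{k,A})$ i.i.d.\ of law $\mu(A)^{-1}\mu|_A\otimes\nu$, independent of the ordered times. In particular $(f_{k,A})_{k\in\NN}$ is i.i.d.\ with law $\nu$, and $N_A(t)\to\infty$ a.s. Hence $\zeta_t(A)$ is a backward iteration of the form \eqref{eq:11}, whose almost sure limit is $Z_{\infty}$ by the Diaconis--Freedman result quoted in the paper; crucially the law of the limit does not depend on $\mu(A)$.

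For the independence statement, I would invoke the standard fact that the restrictions of a Poisson point process to disjoint Borel sets are independent. When $A_1\cap A_2=\varnothing$, the slabs $\R_{+}\times A_i\times\sG$ are disjoint, so $\sP_{A_1}$ and $\sP_{A_2}$ are independent; since $\zeta(A_i)$ is a measurable functional of $\sP_{A_i}$, the conclusion follows.

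For the non-additivity claim, I would argue by contradiction: pick any disjoint pair $A_1,A_2\in\Borel_+(\sX)$ (which exists unless $\mu$ is concentrated on a single atom, in which case $\Borel_+(\sX)$ admits no disjoint pair and there is nothing to prove) and suppose $\zeta(A_1\cup A_2)=\zeta(A_1)+\zeta(A_2)$ a.s. Combining the first two parts, $Z_{\infty}\dsim Z'+Z''$ with independent $Z',Z''\dsim Z_{\infty}$. Passing to characteristic functions yields $\phi(t)=\phi(t)^2$ for every $t\in\R$, so $\phi$ takes values in the disconnected set $\{0,1\}$; continuity of $\phi$ together with $\phi(0)=1$ forces $\phi\equiv 1$, whence $\P\{Z_{\infty}=0\}=1$, contradicting the hypothesis.

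The only substantive step is the last one, and the subtlety there is in opting for characteristic functions rather than moment-based identities (the latter would require integrability assumptions on $Z_{\infty}$ that are not part of the standing hypotheses \eqref{eq:cond1}--\eqref{eq:cond2}). Everything else is bookkeeping with Poisson marks and a direct appeal to the convergence theorem already cited.
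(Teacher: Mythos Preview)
Your proof is correct. The paper does not actually prove this proposition; it is introduced with the phrase ``Note the following simple facts'' and left without argument. Your treatment of the three assertions is sound: the marking theorem for Poisson processes yields the i.i.d.\ structure of $(f_{k,A})_{k\in\NN}$ and hence the equidistribution with $Z_\infty$; independence of Poisson restrictions to disjoint slabs gives the independence claim; and the characteristic-function identity $\phi=\phi^2$ forces $\phi\equiv 1$ for the non-additivity part, neatly avoiding any integrability hypotheses on $Z_\infty$ beyond the standing assumptions \eqref{eq:cond1}--\eqref{eq:cond2}.
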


\begin{theorem}
  \label{thr:s-sim}
  Let $\phi:\sX\mapsto\sX$ be any measurable bijection such
  that $\mu(\phi^{-1}(A))=c\mu(A)$ for a constant $c>0$ and all
  $A\in\Borel_+(\sX)$. Then $\zeta(\phi(A))$ and $\zeta(A)$ share the
  same finite-dimensional distributions as functions of
  $A\in\Borel_+(\sX)$.
\end{theorem}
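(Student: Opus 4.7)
The plan is to exhibit a measure-preserving transformation of the Poisson process $\sP$ that intertwines the sieving construction with the change of set $A\mapsto\phi(A)$, thereby reducing the claim to the mapping theorem for Poisson processes. The hypothesis $\mu(\phi^{-1}(A))=c\mu(A)$ says exactly that the pushforward measure $\phi_{*}\mu$ equals $c\mu$, and bijectivity of $\phi$ together with this relation yields $(\phi^{-1})_{*}\mu=c^{-1}\mu$. To keep the total intensity invariant, the spatial transformation $\phi^{-1}$ must be paired with a compensating rescaling of the time coordinate.

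First I would introduce the map $\Phi:\R_{+}\times\sX\times\sG\to\R_{+}\times\sX\times\sG$ defined by $\Phi(t,x,f):=(t/c,\phi^{-1}(x),f)$ and verify that $\Phi_{*}(\leb\otimes\mu\otimes\nu)=\leb\otimes\mu\otimes\nu$: the pushforward of $\leb$ under $t\mapsto t/c$ equals $c\,\leb$, the pushforward of $\mu$ under $\phi^{-1}$ equals $c^{-1}\mu$, and $\nu$ is untouched. By the mapping theorem for Poisson processes, $\sP':=\Phi(\sP)$ is again a Poisson process with the same intensity, hence $\sP'\dsim\sP$.

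Next I would check that $\Phi$ intertwines the sieving construction pathwise. Namely, $\Phi$ induces an order-preserving bijection between the points of $\sP$ with spatial coordinate in $\phi(A)$ and the points of $\sP'$ with spatial coordinate in $A$, and preserves the Lipschitz marks. Writing $\zeta^{\Phi}$ for the set-indexed process built from $\sP'$ by the same formula as $\zeta$ is built from $\sP$, one obtains $\zeta^{\Phi}_{t}(A)=\zeta_{ct}(\phi(A))$ for every $t>0$ and, passing to the limit, $\zeta^{\Phi}(A)=\zeta(\phi(A))$ for every $A\in\Borel_{+}(\sX)$, simultaneously on one probability space. Since $\sP'\dsim\sP$ and both $\zeta$ and $\zeta^{\Phi}$ are the same measurable functional of their respective Poisson processes, the finite-dimensional distributions of $\zeta^{\Phi}$ and $\zeta$ agree; combined with the pathwise identity above, this yields equality in law of $(\zeta(\phi(A_{1})),\ldots,\zeta(\phi(A_{k})))$ and $(\zeta(A_{1}),\ldots,\zeta(A_{k}))$ for all $A_{1},\ldots,A_{k}\in\Borel_{+}(\sX)$.

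The only delicate point is the measure-theoretic bookkeeping that fixes the time-rescaling factor $c^{-1}$ inside $\Phi$; once this is settled, the limit $t\to\infty$ absorbs the rescaling since the a.s.\ limit $\zeta(A)$ depends on $\sP_{A}$ only through the ordering of its points, so no further difficulty arises.
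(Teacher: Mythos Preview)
Your proof is correct and follows essentially the same approach as the paper's: both use the mapping theorem for Poisson processes, pairing the spatial transformation with a compensating time rescaling $t\mapsto c^{-1}t$ that preserves the ordering of arrival times and hence the iteration limit. Your version is simply more explicit about the bookkeeping (the map $\Phi$ and the pathwise identity $\zeta^{\Phi}(A)=\zeta(\phi(A))$) than the paper's terse argument.
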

\begin{proof}
  By the transformation theorem for Poisson processes, the process
  with intensity measure $\mu(\phi^{-1}(A))$ can be obtained as
  $(\phi(x_i))_{i\in\NN}$, where $(x_i)_{i\in\NN}$ is the Poisson
  process with intensity $\mu$. Thus, $\zeta(\phi(A))$,
  $A\in\Borel_+(\sX)$, coincides with the limiting set-indexed process
  obtained by using Poisson points from the process of intensity $c
  \leb\otimes \mu\otimes\nu$. This process is obtained from the
  original one by transform $t_i\mapsto c^{-1}t_i$, which does not
  change the order of the $t_i$s and so the limit in \eqref{eq:1}.
\end{proof}

It is possible to describe two-dimensional distributions of the
set function $\zeta$ as follows. Let $A_1,A_2$ be two sets from
$\Borel_+(\sX)$. Consider the triplet $(t_*,x_*,f_*)$ such that $t_*$
is the smallest among all triplets $(t_i,x_i,f_i)$ with $x_i\in
A_1\cup A_2$. Then
\begin{multline}
  \label{eq:two-dim-fixed-point}
  (\zeta(A_1),\zeta(A_2))
  \dsim (f_*(\zeta(A_1)),f_*(\zeta(A_2)))\one_{\{x_*\in A_1\cap A_2\}}\\
  + (\zeta(A_1),f_*(\zeta(A_2)))\one_{\{x_*\in A_2\setminus A_1\}} + 
  (f_*(\zeta(A_1)),\zeta(A_2))\one_{\{x_*\in A_1\setminus A_2\}}.
\end{multline}
A similar equation can be written for the joint distribution of
$(\zeta(A_1),\zeta(A_2),\dots,\zeta(A_m))$ for any
$A_1,\dots,A_m\in\Borel_+(\sX)$.

In the special case of $\sX=\R_+$ with $\mu$ being the Lebesgue
measure, Theorem~\ref{thr:s-sim} yields that the finite-dimensional
distributions of $(\zeta(x))_{x>0}$ do not change after scaling of its
argument by any positive constant, meaning that $(\zeta(x))_{x>0}$ is
scale invariant.  After the exponential change of time, the process
$\tilde{\zeta}(s):=\zeta(e^s)$, $s\in\R$, is strictly stationary on
$\R$.

\subsection{Power moments}
\label{sec:power-moments}
 
Using known results for perpetuities, it is easy to deduce the
following statement.

\begin{proposition}
  \label{prop:moments}
  Assume that, for some $p>0$, we have $\E L_f^p<1$ and $\E
  |f(z_0)-z_0|^p <\infty$. Then $\E |\zeta(A)|^p<\infty$ for all
  $A\in\Borel_{+}(\sX)$.
\end{proposition}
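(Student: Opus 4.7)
By Proposition~\ref{prop:same-distr}, $\zeta(A)\dsim Z_\infty$, so the task reduces to showing $\E|Z_\infty|^p<\infty$ for the almost sure limit $Z_\infty$ of the backward iterations $Z_n=f_1\circ\cdots\circ f_n(z_0)$. The plan is to control $Z_\infty-z_0$ by a telescoping sum whose $p$-th moment can be estimated using the Lipschitz bound and the independence of the $f_i$.

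More precisely, writing $Z_0=z_0$ and $Z_k=f_1\circ\cdots\circ f_k(z_0)$, I would estimate
\[
  |Z_k-Z_{k-1}|
  =\bigl|f_1\circ\cdots\circ f_{k-1}(f_k(z_0))-f_1\circ\cdots\circ f_{k-1}(z_0)\bigr|
  \leq L_{f_1}\cdots L_{f_{k-1}}\,|f_k(z_0)-z_0|,
\]
so that by independence $\E|Z_k-Z_{k-1}|^p\leq (\E L_f^p)^{k-1}\,\E|f(z_0)-z_0|^p$. The telescoping $Z_\infty-z_0=\sum_{k\geq 1}(Z_k-Z_{k-1})$ (justified by a.s.\ convergence) then yields the result by treating two cases separately.

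For $p\geq 1$, I would apply Minkowski's inequality to get
\[
  \bigl(\E|Z_\infty-z_0|^p\bigr)^{1/p}
  \leq \sum_{k=1}^\infty (\E L_f^p)^{(k-1)/p}\bigl(\E|f(z_0)-z_0|^p\bigr)^{1/p}
  =\frac{(\E|f(z_0)-z_0|^p)^{1/p}}{1-(\E L_f^p)^{1/p}}<\infty,
\]
since $\E L_f^p<1$. For $0<p<1$ I would instead use the elementary subadditivity $|a+b|^p\leq |a|^p+|b|^p$ to obtain
\[
  \E|Z_\infty-z_0|^p
  \leq \sum_{k=1}^\infty \E|Z_k-Z_{k-1}|^p
  \leq \frac{\E|f(z_0)-z_0|^p}{1-\E L_f^p}<\infty.
\]
Finally, the bound $|Z_\infty|^p\leq c_p(|z_0|^p+|Z_\infty-z_0|^p)$ (with $c_p=1$ for $p\leq 1$ and $c_p=2^{p-1}$ for $p\geq 1$) gives $\E|Z_\infty|^p<\infty$.

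There is no real obstacle here: the assumption $\E L_f^p<1$ already implies the contractivity assumption $\E\log L_f<0$ of \eqref{eq:cond1} (by Jensen), so $Z_\infty$ exists, and the telescoping argument above is the standard route. The only small care needed is the case split between $p\geq 1$ and $p<1$, with Minkowski replaced by subadditivity of $x\mapsto x^p$ in the latter regime.
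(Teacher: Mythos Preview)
Your argument is correct and follows essentially the same route as the paper: both reduce to $Z_\infty$ via Proposition~\ref{prop:same-distr} and bound $|Z_\infty-z_0|$ by the perpetuity $\sum_{k\ge1}|f_k(z_0)-z_0|\prod_{j<k}L_{f_j}$ (your telescoping bound on $|Z_k-Z_{k-1}|$ is exactly the iterated triangle inequality the paper uses). The only difference is that the paper then invokes \cite[Th.~1.4]{Alsmeyer+Iksanov+Roesler:2008} to conclude finiteness of the $p$th moment of this perpetuity, whereas you compute it directly via Minkowski for $p\ge1$ and subadditivity of $t\mapsto t^p$ for $p\in(0,1]$; your version is thus more self-contained but otherwise identical in spirit.
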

\begin{proof}
  For every $A\in\Borel_{+}(\sX)$, the random variable $\zeta(A)$ has
  the same distribution as $Z_{\infty}$. By the triangle inequality
  \begin{equation}
    \label{eq:values_of_f_is_in_cone}
    |f_i(z)-z_0|\leq |f_i(z_0)-z_0|+L_{f_i}|z-z_0|,\quad z\in\R,
  \end{equation}
  and, therefore,
  \begin{displaymath}
    |f^{1\uparrow n}(z_0)-z_0|
    \leq \sum_{k=1}^{n}|f_k(z_0)-z_0|\prod_{j=1}^{k-1}L_{f_j}\quad \text{a.s.},\quad n\in\NN.
  \end{displaymath}
  Letting $n\to\infty$ yields 
  \begin{displaymath}
    |Z_{\infty}-z_0|^p\leq \left(\sum_{k=1}^{\infty}|f_k(z_0)-z_0|
      \prod_{j=1}^{k-1}L_{f_j}\right)^p \quad \text{a.s.}
  \end{displaymath}
  The term in the parentheses on the right-hand side is a
  perpetuity. The criterion for existence of power moments of
  perpetuities is given in 
  \cite[Th.~1.4]{Alsmeyer+Iksanov+Roesler:2008}. In particular, under our
  assumptions the right-hand side of the last display is finite. The
  proof is complete.
\end{proof}

\begin{remark}
The inequality $\E |Z_{\infty}|^p<\infty$ is stated under weaker assumptions in Theorem 2.3(d) in \cite{Alsmsyer+Fuh:2001}. However, in the claimed generality this result does not hold, see the corrigendum \cite{Alsmsyer+Fuh:2002} for a correct form which is weaker than Proposition \ref{prop:moments}.
\end{remark}

If the conditions of Proposition~\ref{prop:moments} hold for $p=2$,
then $\zeta(A)$ is square integrable for all $A\in\Borel_+(\sX)$, and 
\eqref{eq:two-dim-fixed-point} leads to an iterative equation for the
second moments of $\zeta$ as
\begin{multline}
  \label{eq:3}
  \mu(A_1\cup A_2)\E(\zeta(A_1)\zeta(A_2))
  =\mu(A_1\cap A_2)\E(f(\zeta(A_1))f(\zeta(A_2)))\\
	+\mu(A_1\setminus A_2)\E(f(\zeta(A_1))(\zeta(A_2)))
  +\mu(A_2\setminus A_1)\E(\zeta(A_1)f(\zeta(A_2))),
\end{multline}
where $f$ is a random element in $\sG$ with distribution $\nu$
independent of $\zeta(A_1)$ and $\zeta(A_2)$. For processes on the
half-line, \eqref{eq:3} becomes
\begin{displaymath}
  y\E(\zeta(x)\zeta(y))
  =x \E (f(\zeta(x))f(\zeta(y)))+(y-x)\E (\zeta(x)f(\zeta(y))),
  \quad 0<x\leq y,
\end{displaymath}
where $f$ is independent of $\zeta(x)$ and $\zeta(y)$.

\section{Iterations in a finite measure space}
\label{sec:iter-finite-interv}

\subsection{Iterations in a functional space}
\label{sec:iter-funct-space}

Assume that $\mu$ is not identically zero and finite on $\sX$, that is,
$\mu(\sX)\in(0,\infty)$. Then, the construction of the limiting
process can be done as follows. Let $(f_i)_{i\in\NN}$ be a
sequence of i.i.d. copies of $f$ from $\sG$ distributed according to
$\nu$, and let $(U_i)_{i\in\NN}$ be i.i.d. copies of a random element
$U\in\sX$ with distribution
\begin{equation}
  \label{eq:distribution_of_U}
  \P\{U\in A \}=\frac{\mu(A)}{\mu(\sX)},\quad A\in\Borel(\sX).
\end{equation}
Assume further that $(f_i)_{i\in\NN}$ and $(U_i)_{i\in\NN}$ are
independent. 

Let $A\in\Borel_+(\sX)$. Then $f_i$ contributes to the iterations
constituting $\zeta(A)$ if $U_i\in A$, and otherwise $f_i$ is replaced
by the identity map. In other words, we have the following identity
\begin{displaymath}
  \zeta(A)=f^{1\uparrow\infty}_A(z_0), \quad A\in\Borel_{+}(\sX),
\end{displaymath}
where the limit $f^{1\uparrow\infty}_A(z_0)$ of $f^{1\uparrow
  n}_A(z_0)$, as $n\to\infty$, is understood in the a.s.~sense,
$z_0\in\R$, and $f^{1\uparrow n}_A$ are backward iterations of
i.i.d.~copies of the function
\begin{equation}
  \label{eq:f_i_A_definition}
  f_{A}(\cdot):=f(\cdot)\one_{\{U\in A\}}+\Id(\cdot)\one_{\{U\notin A\}}.
\end{equation}
The set function
$(\zeta(A))_{A\in\Borel_{+}(\sX)}$ is the solution of the following
iterative distributional equation
\begin{equation}
  \label{eq:2}
  (\zeta(A))_{A\in\Borel_{+}(\sX)}\fod 
  (f(\zeta(A))\one_{\{U\in A\}}+\zeta(A)\one_{\{U\notin A\}})_{A\in\Borel_{+}(\sX)},
\end{equation}
where $f$, $U$ and $\zeta$ on the right-hand side are
independent. Note that the Lipschitz constant of $f_{A}$ is 
\begin{displaymath}
  L_{f_{A}}=L_{f}\one_{\{U\in A\}}+\one_{\{U\notin A\}},
\end{displaymath}
hence,
\begin{displaymath}
  \log L_{f_{A}}=(\log L_{f})\one_{\{U\in A\}}. 
\end{displaymath}

\begin{example}
  Assume that $\sX=[0,1]$ with the Lebesgue measure. Then $U$
  has the standard uniform distribution on $[0,1]$, and
  \begin{displaymath}
   \zeta(x)=f^{1\uparrow \infty}_x(z_0),\quad x\in (0,1],
  \end{displaymath}
  where the a.s.~limit does not depend on $z_0\in\R$, and
  $f^{1\uparrow \infty}_x$ are iterations composed of i.i.d.~copies of the
  function
  \begin{displaymath}
    f_{x}(\cdot):=f(\cdot)\one_{\{U\leq x\}}
    +\Id(\cdot)\one_{\{U> x\}}.
  \end{displaymath}
\end{example}

\subsection{Uniform convergence of sieved iterations}
\label{sec:uniform-convergence}

We now aim to prove the uniform convergence of iterations as functions
of Borel set $A$ by reducing the problem to the uniform convergence of
empirical processes. Let $\sA$ be a subclass of Borel sets in $\sX$. A
finite set $I$ of cardinality $n$ is shattered by $\sA$ if each of its
$2^n$ subsets can be obtained as $I\cap A$ for some $A\in\sA$. The
Vapnik--\v{C}ervonenkis  dimension of $\sA$ is the supremum of
cardinalities $n$ of all finite sets $I$ in $\sX$ shattered by
$\sA$. The family $\sA$ is called a Vapnik--\v{C}ervonenkis class if its
Vapnik--\v{C}ervonenkis dimension is finite. We refer to the classical book \cite{Vapnik:1998}, see in particular Section 4.9 therein, for the details of the Vapnik--\v{C}ervonenkis theory.

\begin{theorem}
  \label{thr:uniform}
  Let $\sA$ be a collection of Borel subsets of $\sX$ with
  $\mu(\sX)<\infty$ such that $\sA$ is a Vapnik--\v{C}ervonenkis
  class and $\inf_{A\in\sA}\mu(A)>0$.  Then
  \begin{equation}
    \label{eq:12}
    \sup_{A\in\sA}\left|\zeta(A)-f^{1\uparrow n}_{A}(z_0)\right|\toas  0 
    \quad \text{as }\; n\to\infty.
  \end{equation}
\end{theorem}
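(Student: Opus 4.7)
The plan is to control $|\zeta(A)-f_A^{1\uparrow n}(z_0)|$ by a telescoping sum of successive iteration differences, and then to use a Vapnik--\v{C}ervonenkis uniform law of large numbers to force the resulting prefix Lipschitz products to decay exponentially, uniformly over $A\in\sA$.

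First I would set $G_n(A):=f_A^{1\uparrow n}(z_0)$ and $\Pi_k(A):=\prod_{j=1}^k L_{f_{j,A}}=\prod_{j=1}^k L_{f_j}^{\one_{\{U_j\in A\}}}$, and use the identity $G_{k+1}(A)=f_A^{1\uparrow k}(f_{k+1,A}(z_0))$ together with the Lipschitz bound on $f_A^{1\uparrow k}$ to get
\begin{displaymath}
|G_{k+1}(A)-G_k(A)|\le \Pi_k(A)\,|f_{k+1}(z_0)-z_0|\one_{\{U_{k+1}\in A\}}.
\end{displaymath}
Summing over $k\ge n$ and using the a.s.~pointwise convergence $G_n(A)\to\zeta(A)$ established in Section~\ref{sec:iter-funct-space}, one arrives at
\begin{displaymath}
\sup_{A\in\sA}|\zeta(A)-G_n(A)|\le \sum_{k=n}^{\infty}\Pi_k^{\ast}\,|f_{k+1}(z_0)-z_0|,\qquad \Pi_k^{\ast}:=\sup_{A\in\sA}\Pi_k(A).
\end{displaymath}

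The heart of the argument is to prove exponential decay of $\Pi_k^{\ast}$. Since $\log\Pi_k(A)=\sum_{j=1}^{k}(\log L_{f_j})\one_{\{U_j\in A\}}$, I would invoke a uniform strong law of large numbers for the function class $\{(u,f)\mapsto(\log L_f)\one_A(u):A\in\sA\}$ on $\sX\times\sG$; its metric entropy is controlled by that of the Vapnik--\v{C}ervonenkis class $\sA$, and $(\log L_f)^+\le L_f$ together with \eqref{eq:cond1} ensures integrability of the envelope whenever $\E\log L_f>-\infty$. Hence
\begin{displaymath}
\frac{1}{k}\log\Pi_k(A)\longrightarrow p(A)\,\E\log L_f,\qquad p(A):=\mu(A)/\mu(\sX),
\end{displaymath}
uniformly in $A\in\sA$ almost surely. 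With $c_0:=\mu(\sX)^{-1}\inf_{A\in\sA}\mu(A)>0$ and any $\alpha\in(0,-c_0\E\log L_f)$, we obtain $\Pi_k^{\ast}\le e^{-\alpha k}$ for all $k$ sufficiently large, a.s. In the degenerate case $\E\log L_f=-\infty$, I would first replace $\log L_f$ by $(\log L_f)\vee(-M)$ with $M$ chosen so that the truncated mean is arbitrarily negative, use the pointwise inequality $\log L_{f_j}\le(\log L_{f_j})\vee(-M)$ to preserve the upper bound on $\Pi_k^{\ast}$, and then argue as before.

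Finally, \eqref{eq:cond2} together with Markov's inequality yields $\P\{|f_{k+1}(z_0)-z_0|>e^{\alpha k/2}\}\le e^{-\alpha k/2}\E|f(z_0)-z_0|$, which is summable in $k$, so Borel--Cantelli gives $|f_{k+1}(z_0)-z_0|\le e^{\alpha k/2}$ for all $k$ large, almost surely. Combined with the previous step this produces $\sup_{A\in\sA}|\zeta(A)-G_n(A)|\le\sum_{k\ge n}e^{-\alpha k/2}\to 0$ almost surely, as required. The principal difficulty is the VC uniform SLLN step: one must carefully justify that the VC property of $\sA$ yields a uniform SLLN for the signed, possibly unbounded integrand $(\log L_f)\one_A$, and handle $\E\log L_f=-\infty$ cleanly by truncation. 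The remaining ingredients are routine Lipschitz telescoping and Borel--Cantelli.
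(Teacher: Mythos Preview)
Your proposal is correct and follows essentially the same route as the paper: the telescoping bound $\sup_{A\in\sA}|\zeta(A)-G_n(A)|\le\sum_{k\ge n}\Pi_k^{\ast}\,Q_{k+1}$ is exactly the paper's estimate \eqref{eq:key_tail_estimate2}, the exponential decay of $\Pi_k^{\ast}$ via a VC uniform SLLN for $(\log L_f)\one_A$ is the content of the paper's Lemma~\ref{lem:glivenko-cantelli-bis} (proved there by truncation and a reverse-submartingale argument, including the $\E\log L_f=-\infty$ case you handle by the same truncation), and the closing Borel--Cantelli step matches the paper's Cauchy radical test using $\E\log^+|f(z_0)-z_0|<\infty$. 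The only substantive difference is that the paper spells out the proof of the uniform SLLN lemma rather than invoking it as a black box.
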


\begin{lemma}
  \label{lem:glivenko-cantelli-bis}
  Assume that a family $\sA$ satisfies conditions of
  Theorem~\ref{thr:uniform}. Further, let $(\xi_k)_{k\in\NN}$ be a
  sequence of i.i.d.~copies of an integrable random variable $\xi$
  such that $\E\xi<0$, and let $(U_k)_{k\in\NN}$ be a sequence of
  i.i.d.~copies of the random element $U$ with distribution
  \eqref{eq:distribution_of_U}, which is also independent of
  $(\xi_k)_{k\in\NN}$. Then
  \begin{equation}
    \label{eq:6}
    \sup_{A\in\sA}\frac{\sum_{k=1}^{n}\xi_k\one_{\{U_k\in A\}}}{n}
    \toas \frac{\inf_{A\in\sA}\mu(A)}{\mu(\sX)}\E\xi<0
    \quad \text{as }\;n\to\infty.
  \end{equation}
\end{lemma}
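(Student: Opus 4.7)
The plan is to reduce \eqref{eq:6} to the uniform strong law
\begin{equation}
  \label{eq:plan-uniform-glc}
  \sup_{A\in\sA}\left|\frac{1}{n}\sum_{k=1}^{n}\xi_k\one_{\{U_k\in A\}}-\frac{\mu(A)}{\mu(\sX)}\E\xi\right|\toas 0,
\end{equation}
after which \eqref{eq:6} follows from the inequality $|\sup_A a_n(A)-\sup_A b(A)|\le\sup_A|a_n(A)-b(A)|$ applied to $a_n(A)=\tfrac{1}{n}\sum_{k=1}^{n}\xi_k\one_{\{U_k\in A\}}$ and $b(A)=\mu(A)\E\xi/\mu(\sX)$. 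Since $\E\xi<0$, the deterministic identity $\sup_{A\in\sA}(\E\xi)\mu(A)/\mu(\sX)=(\E\xi)\inf_{A\in\sA}\mu(A)/\mu(\sX)$, combined with the hypothesis $\inf_{A\in\sA}\mu(A)>0$, yields both the value of the limit in \eqref{eq:6} and the asserted strict negativity.

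To establish \eqref{eq:plan-uniform-glc} I would combine an $L^{1}$-approximation of $\xi$ with the classical Vapnik-\v{C}ervonenkis Glivenko-Cantelli theorem. Fix $\epsilon>0$ and, using integrability of $\xi$, pick a simple function $\hat\xi=\sum_{j=1}^{J}c_j\one_{B_j}$ on $\R$ associated with a finite Borel partition $(B_j)_{j=1}^{J}$ of $\R$ such that $\E|\xi-\hat\xi(\xi)|<\epsilon$. The decomposition
\begin{displaymath}
  \xi_k\one_{\{U_k\in A\}}=\sum_{j=1}^{J}c_j\one_{\{\xi_k\in B_j,\,U_k\in A\}}+(\xi_k-\hat\xi(\xi_k))\one_{\{U_k\in A\}}
\end{displaymath}
reduces the bounded part to a finite linear combination of empirical frequencies indexed by the product class $\{B_j\times A:A\in\sA\}\subset\R\times\sX$. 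Each of these product classes has Vapnik-\v{C}ervonenkis dimension not exceeding that of $\sA$ itself, because any set shattered by $\{B_j\times A:A\in\sA\}$ must have all first coordinates in $B_j$, after which shattering reduces to shattering of the second coordinates by $\sA$. The classical Glivenko-Cantelli theorem of \cite{Vapnik:1998} applied to the i.i.d.\ pairs $(\xi_k,U_k)$ therefore yields, for every $j$, uniform a.s.\ convergence of the corresponding empirical frequency to $\P\{\xi\in B_j\}\cdot\mu(A)/\mu(\sX)$; summing in $j$ shows that the bounded part converges uniformly in $A\in\sA$ to $\E\hat\xi(\xi)\cdot\mu(A)/\mu(\sX)$.

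For the residual part, the crude deterministic bound
\begin{displaymath}
  \sup_{A\in\sA}\left|\frac{1}{n}\sum_{k=1}^{n}(\xi_k-\hat\xi(\xi_k))\one_{\{U_k\in A\}}\right|\le\frac{1}{n}\sum_{k=1}^{n}|\xi_k-\hat\xi(\xi_k)|
\end{displaymath}
combined with the ordinary strong law applied to the i.i.d.\ sequence $|\xi_k-\hat\xi(\xi_k)|$ gives an a.s.\ upper limit of $\E|\xi-\hat\xi(\xi)|<\epsilon$, while the corresponding expectation term satisfies the uniform deterministic bound $|\E\xi-\E\hat\xi(\xi)|\cdot\mu(A)/\mu(\sX)\le\epsilon$. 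Assembling the three pieces shows that the $\limsup$ of the supremum in \eqref{eq:plan-uniform-glc} is at most $2\epsilon$ almost surely, and letting $\epsilon\downarrow 0$ along a rational sequence completes the proof.

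The main obstacle I anticipate is the usual measurability pitfall of Glivenko-Cantelli type statements: the supremum $\sup_{A\in\sA}|\cdot|$ over an uncountable class $\sA$ is not measurable a priori. The standard remedy is either to work with outer probabilities, or to assume a mild admissibility property of $\sA$ (for instance, separability in the pseudo-metric $d(A,A')=\mu(A\triangle A')$), so that the supremum coincides almost surely with a supremum over a countable subclass; under either convention the argument above proceeds without further modification.
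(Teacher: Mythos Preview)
Your argument is correct and follows the same high-level strategy as the paper: reduce to the uniform strong law $\sup_{A\in\sA}|S_n(A)-\E S_n(A)|\to 0$ a.s., handle a bounded approximation via the Vapnik--\v{C}ervonenkis property, and control the remainder by a crude bound plus the ordinary strong law. The technical implementations differ, however. The paper truncates $\xi$ at a level $c$, invokes an abstract empirical-process result to obtain $\E\|S_n-\E S_n\|\to 0$ for the bounded part, and then upgrades this to almost sure convergence via a reverse submartingale argument; the unbounded tail is handled exactly as you do. Your route instead approximates $\xi$ by a simple function and reduces the bounded part to a finite linear combination of indicator frequencies over the product classes $\{B_j\times A:A\in\sA\}$, each of which inherits the VC property of $\sA$, so the classical Glivenko--Cantelli theorem applies directly. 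This is arguably more elementary, since it avoids both the general empirical-process machinery and the reverse martingale step, at the price of the (harmless) bookkeeping with the simple-function approximation. Your closing remark on measurability is apt and applies equally to both proofs; the paper tacitly relies on the framework of its cited reference to handle this.
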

\begin{proof}
  Define a random measure (or abstract empirical process) on
  $\Borel(\sX)$ by
  \begin{displaymath}
    S_n(A):=\sum_{k=1}^{n}\phi_{nk}(A), \quad A\in\Borel(\sX),
  \end{displaymath}
  where 
  \begin{displaymath}
    \phi_{nk}(A):= \frac{1}{n}\xi_k\one_{\{U_k\in A\}},\quad
    k=1,\dots,n,\; A\in\Borel(\sX).
  \end{displaymath}
  Note that $\E S_n(A)=\mu(A)\E\xi/\mu(\sX)$ does not depend on
  $n$. For a function $\psi:\sA\mapsto\R$, denote
  $\|\psi\|:=\sup_{A\in\sA}|\psi(A)|$.

  Assume first that $|\xi_i|\leq c$ a.s. Then
  \begin{displaymath}
    \E \left(\sum_{k=1}^n \|\phi_{nk}\|\right)\leq c, \quad n\geq1.
  \end{displaymath}
  For $\delta>0$, 
  \begin{displaymath}
    \E \left(\sum_{k=1}^n \one_{\{\|\phi_{nk}\|>\delta\}}\|\phi_{nk}\|\right)
    \leq \frac{c}{n} \E \left(\sum_{k=1}^n \one_{\{|\xi_k|>n\delta\}}\right)
    =c\Prob{|\xi_k|>n\delta}\to 0 \quad \text{as }\; n\to\infty.
  \end{displaymath}
  By results from the theory of empirical processes (see
  \cite[Ch.~6]{duem17}),
  \begin{equation}
    \label{eq:14}
    \E \|S_n-\E S_n\|\to 0 \quad \text{as }\; n\to\infty.
  \end{equation}
  The sequence $S_n(A)-\E S_n(A)$ is a reverse martingale for each
  $A$, and so $\| S_n-\E S_n\|$ is a reverse submartingale which is
  bounded, since $|S_n(A)-\E S_n(A)|\leq 2c$ for all $A$ and $n$. Thus, 
  \begin{displaymath}
    \|S_n-\E S_n\|\toas 0 \quad\text{as }\; n\to\infty. 
  \end{displaymath}
  For a not necessarily bounded $\xi_k$, decompose the random measure
  as
  \begin{displaymath}
    S_n(A)=S'_n(A)+S''_n(A)=\sum_{k=1}^n
    \phi_{nk}(A)\one_{\{|\xi_k|\leq c\}} +\sum_{k=1}^n
    \phi_{nk}(A)\one_{\{|\xi_k|>c\}}.
  \end{displaymath}
  Then $\|S'_n-\E S'_n\|\to 0$ a.s. as $n\to \infty$ by the argument
  above applicable to the bounded $\xi_k$s. Furthermore,
  \begin{displaymath}
    \|S''_n-\E S''_n\|\leq \frac{1}{n}\sum_{k=1}^n 
    |\xi_k| \one_{\{|\xi_k|>c\}}+ \frac{1}{n}\sum_{k=1}^n
    \E(|\xi_k| \one_{\{|\xi_k|>c\}}). 
  \end{displaymath}
  The a.s. upper limit of the right-hand side can be made arbitrarily
  small by the choice of $c$.  Therefore,
  \begin{displaymath}
    \sup_{A\in\sA}\frac{\sum_{k=1}^{n}\xi_k
      \one_{\{U_k\in A\}}}{n}\toas 
    \sup_{A\in\sA}  \frac{\mu(A)}{\mu(\sX)}\E\xi
    =\frac{\inf_{A\in\sA}\mu(A)}{\mu(\sX)}\E\xi\quad
    \text{as }\;n\to\infty,
  \end{displaymath}
  because $\E \xi<0$. 
\end{proof}

\begin{remark}
  It is possible to impose weaker conditions that guarantee the
  uniform convergence of $S_n(A)$ over $A\in\sA$. Let
  $N(\eps,\sA,\hat\rho_n)$ be the cardinality of the smallest
  $\eps$-net in $\sA$ with respect to the random pseudometric
  \begin{displaymath}
    \hat\rho_n(A,A'):=\sum_{k=1}^n |\phi_{nk}(A)-\phi_{nk}(A')|
    =\frac{1}{n} \sum_{k=1}^n |\xi_k|\one_{\{U_k\in A\triangle A'\}}.
  \end{displaymath}
  Then the Vapnik--\v{C}ervonenkis class assumption may be replaced by
  the assumption that $N(\eps,\sA,\hat\rho_n)$ converges to zero in
  probability as $n\to\infty$. A comprehensive treaty of uniform
  convergence for empirical processes can be
  found 
  in \cite[Ch.~3]{Vapnik:1998}. In particular, a necessary and
  sufficient conditions are given in Theorem 3.5 on p.~101 therein.
\end{remark}

\begin{proof}[Proof of Theorem~\ref{thr:uniform}]
  Let $(L_{f_{i,A}})_{i\in\NN}$ be the Lipschitz constants of the
  i.i.d.~copies $(f_{i,A})_{i\in\NN}$ of the function $f_A$ given by
  \eqref{eq:f_i_A_definition}.  By the Lipschitz property,
  \begin{equation}
    \label{eq:8}
    \left|\zeta(A)-f^{1\uparrow n}_{A}(z_0)\right|
    \leq L_{f_{1,A}}\cdots L_{f_{n,A}}|f^{(n+1)\uparrow\infty}_A(z_0)-z_0|
  \end{equation}
  for any $A\in\Borel_+(\sX)$. Moreover, for each $i\in\NN$ and an
  arbitrary $z\in\R$, we have, similarly to
  \eqref{eq:values_of_f_is_in_cone},
  \begin{align*}
    |f_{i,A}(z)-z_0|&=\left|z\one_{\{U_i\notin A\}}+f_i(z)\one_{\{U_i\in A\}}-z_0\right|\\
    &=\left|(z-z_0)\one_{\{U_i\notin A\}}-z_0\one_{\{U_i\in A\}}
      +f_i(z_0)\one_{\{U_i\in A\}}+(f_i(z)-f_i(z_0))\one_{\{U_i\in A\}}\right|\\
    &\leq |f_i(z_0)-z_0|+\left(L_{f_i}\one_{\{U_i\in A\}}
      +\one_{\{U_i\notin A\}}\right)|z-z_0|=:Q_i+L_{f_{i,A}}|z-z_0|.
  \end{align*}
  Iterating the above inequality for $|f_{i,A}(z)-z_0|$ yields
  \begin{equation}
    \label{eq:key_tail_estimate}
    |f^{(n+1)\uparrow\infty}_A(z_0) -z_0|
    \leq\sum_{i=n+1}^{\infty}Q_i\prod_{k=n+1}^{i-1}L_{f_{k,A}}.
  \end{equation}
  Plugging this upper bound into \eqref{eq:8}, we obtain
  \begin{equation}
    \label{eq:key_tail_estimate2}
    \left|\zeta(A)-f^{1\uparrow n}_{A}(z_0)\right|
    \leq \sum_{i=n+1}^{\infty}Q_i\prod_{k=1}^{i-1}L_{f_{k,A}} 
    \leq \sum_{i=n+1}^{\infty}Q_i\left(\sup_{A\in\sA}\prod_{k=1}^{i-1}L_{f_{k,A}}\right).
  \end{equation}
  To show that the right-hand side of the above inequality converges
  to zero a.s., it suffices to check that
  \begin{equation}
    \label{eq:10}
    \sum_{i=1}^{\infty}Q_i\left(\sup_{A\in\sA}
      \prod_{k=1}^{i-1}L_{f_{k,A}}\right)<\infty\quad\text{a.s.}
  \end{equation}
  This follows from Cauchy's radical test. Indeed,
  \begin{multline*}
    \limsup_{i\to\infty}\frac{1}{i}
    \Big(\log Q_i+ \sup_{A\in\sA}\sum_{k=1}^{i-1} \log L_{f_{k,A}}\Big)\\
    \leq \limsup_{i\to\infty}\frac{1}{i}\log^{+} Q_i
    + \limsup_{i\to\infty}\frac{1}{i}\sup_{A\in\sA}\sum_{k=1}^{i-1} 
      (\log L_{f_k})\one_{\{U_k\in A\}}
    =\frac{\inf_{A\in\sA}\mu(A)}{\mu(\sX)}\E\log L_{f}<0,
  \end{multline*}
  where the second $\limsup$ was calculated in
  Lemma~\ref{lem:glivenko-cantelli-bis}. The first $\limsup$ is equal to zero
  by the Borel--Cantelli lemma and the fact that $\E \log^{+} Q_1= \E \log^{+}
  |f_1(z_0)-z_0|<\infty$ by the assumption \eqref{eq:cond2}. Thus,
  \begin{equation}
    \limsup_{i\to\infty}\left(Q_i\left(\sup_{A\in\sA}
        \prod_{k=1}^{i-1}L_{f_{k,A}}\right)\right)^{1/i}
    \leq \exp\left\{\frac{\inf_{A\in\sA}\mu(A)}{\mu(\sX)}\E\log L_{f}\right\}<1,
  \end{equation}
  and this completes the proof.
\end{proof}

Since the Vapnik--\v{C}ervonenkis dimension of a monotone family of
sets is $2$, we obtain the following result.

\begin{corollary}
  \label{cor:monotone}
  Let $\sA=\{A_t,t\geq 0\}$ be a nondecreasing (respectively,
  nonincreasing) subfamily of $\Borel_+(\sX)$ of finite measure such
  that $\cup_t A_t$ (respectively, $\cap_t A_t$) is of finite positive
  measure. Then \eqref{eq:12} holds.
\end{corollary}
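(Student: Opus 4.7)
I would derive Corollary~\ref{cor:monotone} as a direct application of Theorem~\ref{thr:uniform} to the monotone family $\sA=\{A_t:t\geq 0\}$. The three hypotheses to verify are: (i) $\sA$ is a Vapnik--\v{C}ervonenkis class; (ii) one may assume $\mu(\sX)<\infty$; and (iii) $\inf_{A\in\sA}\mu(A)>0$.

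For (i), I would show that any family of sets linearly ordered by inclusion has VC dimension at most $1$ (so certainly at most $2$). Given two distinct points $x,y\in\sX$, let $x$ be the first point to be absorbed by the nondecreasing family (respectively, the first point to leave the nonincreasing one). Then every $A_t$ that contains $y$ must also contain $x$, so the trace $A_t\cap\{x,y\}=\{y\}$ never arises, and $\{x,y\}$ is not shattered.

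For (ii) and (iii), I would exploit monotonicity together with the hypotheses on the union or intersection. Put $\sX':=\cup_t A_t$ in the nondecreasing case and $\sX':=A_0$ in the nonincreasing case. In either case the hypothesis gives $\mu(\sX')\in(0,\infty)$, and restricting the Poisson process $\sP$ to $\R_+\times\sX'\times\sG$ does not alter the joint law of $(\zeta(A_t))_{t\geq 0}$, since Poisson marks with spatial coordinate outside $\sX'$ correspond to identity maps in every iteration indexed by $\sA$. The lower bound on measures is then immediate: in the nonincreasing case $\mu(A_t)\geq\mu(\cap_s A_s)>0$ by hypothesis; in the nondecreasing case $\mu(A_t)\geq\mu(A_0)>0$, because $A_0$ is the smallest element of $\sA$ and lies in $\Borel_+(\sX)$.

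With all three hypotheses verified, Theorem~\ref{thr:uniform} yields \eqref{eq:12}. The only genuine step beyond bookkeeping is the VC computation in (i), which is routine thanks to the linear order on $\sA$; for this reason I do not foresee a substantive obstacle. The remaining two points are reductions that use the monotone structure to convert the set-theoretic hypotheses into the quantitative bounds needed by Theorem~\ref{thr:uniform}.
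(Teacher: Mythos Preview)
Your proposal is correct and follows the same route as the paper, which simply records that a monotone family has finite Vapnik--\v{C}ervonenkis dimension (the paper states it as $2$, using the convention that the VC index is the smallest cardinality \emph{not} shattered; your bound of $1$ uses the other convention) and then invokes Theorem~\ref{thr:uniform}. Your explicit verification of (ii) and (iii) via restriction to $\sX'$ and monotonicity merely spells out what the paper leaves implicit.
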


\subsection{Uniform convergence for sieved iterations on the
  half-line}
\label{sec:unif-conv-proc}

Let $\sX=\R_+$ be the half-line with $\mu$ being the Lebesgue measure.
Let us consider stochastic process
$(\zeta(x))_{x>0}=(\zeta([0,x]))_{x>0}$. By
Corollary~\ref{cor:monotone}, the iterations
\begin{equation}
  \label{eq:19}
  \zeta_n(x):=f^{1\uparrow n}_{[0,x]}(z_0), \quad n\geq1,
\end{equation}
converge a.s.~to $\zeta$ uniformly over $x\in[a,b]$ for each $0<a\leq
b<\infty$.  The following result establishes their uniform convergence
in $L^p$.

Denote $\Phi(x):=\E L_f^x$, and let
\begin{equation}
  \label{eq:i_def}
  \mathcal{I}:=\{x>0: \Phi(x)<1\}.
\end{equation}
The set $\mathcal{I}$ is not empty under assumption \eqref{eq:cond1},
because it contains all sufficiently small positive numbers. This
follows from the following three relations: $\Phi(0)=1$,
$\Phi'(0)=\E\log L_f<0$ and $\Phi(1)<\infty$.

\begin{proposition}
  \label{prop:L1-uniform}
  For each $a>0$ and $p\in \mathcal{I}\cap(0,1]$,  
  \begin{equation}
    \label{eq:12a}
    \E \sup_{x\in[a,1]}\left|\zeta(x)-\zeta_n(x)\right|^p\to 0
    \quad \text{as }\; n\to\infty.
  \end{equation}
\end{proposition}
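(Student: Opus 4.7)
The plan is to adapt the tail estimate already obtained inside the proof of Theorem~\ref{thr:uniform}. Specifically, inequality \eqref{eq:key_tail_estimate2} applied with $A=[0,x]$ gives
\begin{equation*}
  |\zeta(x) - \zeta_n(x)| \leq \sum_{i=n+1}^\infty Q_i \prod_{k=1}^{i-1} L_{f_{k,[0,x]}},
\end{equation*}
where $Q_i := |f_i(z_0) - z_0|$. Since $p \in (0,1]$, the subadditivity $(\sum a_i)^p \leq \sum a_i^p$ for nonnegative reals yields
\begin{equation*}
  \sup_{x \in [a,1]} |\zeta(x) - \zeta_n(x)|^p \leq \sum_{i=n+1}^\infty Q_i^p \sup_{x \in [a,1]} \prod_{k=1}^{i-1} L_{f_{k,[0,x]}}^p.
\end{equation*}
The overall strategy is to prove that the analogous series from $i=1$ has summable expectation, after which \eqref{eq:12a} follows from dominated convergence applied to the tails.

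The first and main step is to bound $\E \sup_{x \in [a,1]} \prod_{k=1}^{i-1} L_{f_{k,[0,x]}}^p$ geometrically in $i$. For each realization of $(U_k, f_k)_{k<i}$, the product is piecewise constant and right-continuous in $x$, with possible jumps only at the $U_k$'s, so its supremum over $[a,1]$ is attained on the finite random set $\{a\} \cup \{U_j : a < U_j \leq 1,\ j < i\}$. Replacing the maximum by the sum and exploiting that $L_{f_k,[0,y]}^p$ equals $L_{f_k}^p$ when $U_k \leq y$ and $1$ otherwise, so that $\E L_{f,[0,y]}^p = 1 - y(1-\Phi(p))$, a direct calculation by conditioning on $U_j$ and one integration against its uniform density yields
\begin{equation*}
  \E \sup_{x \in [a,1]} \prod_{k=1}^{i-1} L_{f_{k,[0,x]}}^p \leq \frac{\bigl(1 - a(1-\Phi(p))\bigr)^{i-1}}{1 - \Phi(p)}.
\end{equation*}

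The second step combines this with the independence of $Q_i$ from $(f_k, U_k)_{k<i}$ and with $\E Q_1^p < \infty$, which follows from \eqref{eq:cond2} and Jensen's inequality for $p \in (0,1]$. Since $p \in \mathcal{I}$ implies $\Phi(p) < 1$ and $a > 0$ forces $1 - a(1-\Phi(p)) < 1$, the resulting majorants form a convergent geometric series. Applying dominated convergence to the nonincreasing tails $\sum_{i > n} Q_i^p \sup_x \prod_{k<i} L_{f_{k,[0,x]}}^p$ then completes the proof.

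The main technical obstacle is the geometric decay in the first step. Pointwise bounds such as $L_{f_k,[0,x]}^p \leq L_{f_k}^p \vee 1$ inside the product would give an expected factor possibly exceeding $1$ and hence exponential growth. It is crucial to use that the supremum is taken over a single $x$ shared by all factors, which confines the maximizer to a random set of cardinality at most $i$ and restores the correct geometric rate.
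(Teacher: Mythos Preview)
Your proof is correct and follows the same overall strategy as the paper: start from the tail estimate \eqref{eq:key_tail_estimate2}, apply subadditivity of $t\mapsto t^p$, and show that $\E\sup_{x\in[a,1]}\prod_{k=1}^{i-1}L_{f_{k,[0,x]}}^p$ is bounded by $(1-\Phi(p))^{-1}(1-a(1-\Phi(p)))^{i-1}$, which is summable in $i$.

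The only difference is in how that geometric bound is obtained. The paper reorders the factors according to the order statistics $x_{(i-1:1)}<\cdots<x_{(i-1:i-1)}$, splits the product into the part with $x_{(i-1,k)}\leq a$ (independent of $x$) and the part with $a<x_{(i-1,k)}\leq x$, bounds the supremum of the latter by the supremum of \emph{all} partial products of i.i.d.\ copies of $L_f$, and then by the sum $\sum_{j\geq 0}\Phi(p)^j$. You instead observe that the supremum over $[a,1]$ is attained on the finite set $\{a\}\cup\{U_j: a<U_j\leq 1,\ j<i\}$, bound the maximum by the sum, and integrate each term directly (the term at $a$ contributes $(1-a(1-\Phi(p)))^{i-1}$; each $U_j$-term, after conditioning on $U_j=y$ and noting the $j$th factor becomes $L_{f_j}^p$ with mean $\Phi(p)$, contributes $\Phi(p)\int_a^1(1-y(1-\Phi(p)))^{i-2}\diff y$, and the $i-1$ such terms telescope). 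Both routes land on the identical bound; yours avoids the order-statistics relabelling and the independence-of-factors argument, at the cost of a slightly more explicit integral computation.
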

\begin{proof}
  Repeat the arguments from the proof of Theorem~\ref{thr:uniform},
  see \eqref{eq:key_tail_estimate2}, and use the subadditivity of the
  function $t\mapsto t^p$ to arrive at
  \begin{equation}
    \label{eq:18}
    \E \sup_{x\in[a,1]} \left|\zeta(x)-\zeta_n(x)\right|^p
    \leq \sum_{i=n+1}^{\infty} (\E Q_i)^p\; \E \left(\sup_{x\in[a,1]}
      \prod_{k=1}^{i-1}L_{f_{k,[0,x]}}\right)^p.
  \end{equation}
  Fix $i\geq 2$.  In order to calculate the last expectation, recall
  that $(t_k,x_k,f_k)_{k=1,\dots,i-1}$ is the enumeration of the
  first $i-1$ atoms of $\sP_{[0,1]}$ ordered such that
  $t_1<t_2<\cdots<t_{i-1}$. Let $x_{(i-1:1)}<\cdots <x_{(i-1:i-1)}$ be
  the ordered points $x_1,\dots,x_{i-1}$, and let $f_{(i-1:k),[0,x]}$ be the
  corresponding functions. Note that
  \begin{equation}
    \label{eq:20}
    \prod_{k=1}^{i-1}L_{f_{k,[0,x]}}=\prod_{k=1}^{i-1}L_{f_{(i-1,k),[0,x]}}
    =\prod_{k=1}^{i-1} L_{f_{(i-1,k)}}^{\one_{\{a<x_{(i-1,k)}\leq x\}}}
    \prod_{k=1}^{i-1} L_{f_{(i-1,k)}}^{\one_{\{x_{(i-1,k)}\leq a\}}},
  \end{equation}
  The two factors on the right-hand side of \eqref{eq:20} are
  independent by the Poisson property, and the second factor does not
  depend on $x$. Thus,
  \begin{displaymath}
    \E \left(\sup_{x\in[a,1]}\prod_{k=1}^{i-1}L_{f_{k,[0,x]}}\right)^p
    \leq \E \left (\sup_{x\in [a,1]}\prod_{k=1}^{i-1}
      L_{f_{(i-1,k)}}^{\one_{\{a<x_{(i-1,k)}\leq x\}}}\right)^p\E \left(
      \prod_{k=1}^{i-1} L_{f_{(i-1,k)}}^{\one_{\{x_{(i-1,k)}\leq a\}}} \right)^p.
  \end{displaymath}
  Further, since $(f_k)$ and $(x_k)$ are independent,
  \begin{align*}
    \E \left (\sup_{x\in [a,1]}\prod_{k=1}^{i-1}
      L_{f_{(i-1,k)}}^{\one_{\{a<x_{(i-1,k)}\leq x\}}}\right)^p 
    &= \E \left (\sup_{x\in [a,1]}\prod_{k=1}^{i-1}
      L_{f_{k}}^{\one_{\{a<x_{(i-1,k)}\leq x\}}}\right)^p 
    \leq \E \left(\sup_{i\geq 1}\prod_{k=1}^{i-1}L_{f_k}\right)^p\\
    &\leq \sum_{i=1}^{\infty}\E \left(\prod_{k=1}^{i-1}L_{f_k}\right)^p
    =\sum_{i=1}^{\infty} (\Phi(p))^{i-1}=\frac{1}{1-\Phi(p)}<\infty.
  \end{align*}
  Therefore,
  \begin{multline*}
    \E \left( \sup_{x\in[a,1]} \prod_{k=1}^{i-1}L_{f_{k,[0,x]}}\right)^p \leq \frac{1}{1-\Phi(p)} \E \left(
      \prod_{k=1}^{i-1} L_{f_{(i-1,k)}}^{\one_{\{x_{(i-1,k)}\leq a\}}} \right)^p\\
    = \frac{1}{1-\Phi(p)}\sum_{k=0}^{j-1}\binom{i-1}{k}\Phi(p)^k
    a^k(1-a)^{i-1-k} =\frac{(1-(1-\Phi(p))a)^{i-1}}{1-\Phi(p)},
  \end{multline*}
  where the first equality holds by conditioning on the number of
  points $x_{1},x_2,\dots,x_{i-1}$ which fall in the interval
  $[0,a]$. Summarising, we see that the series on the right-hand side
  of \eqref{eq:18} converges and \eqref{eq:12a} follows.
\end{proof}

\section{Path regularity properties}
\label{sec:cont-prop-zeta}

\subsection{Set-indexed functions}
\label{sec:set-index-funct}

As we have already mentioned, the set function $\zeta$ is not a
measure, yet it is possible to show that it is a.s. continuous from
below and from above.

\begin{proposition}
  \label{thm:continuity-bis}
  Assume that $(A_m)_{m\in\NN}$ is a nondecreasing sequence of sets from
  $\Borel_+(\sX)$ such that $\mu(A_{\infty})<\infty$, where
  $A_{\infty}:=\cup_{m=1}^{\infty}A_m$. Then
  \begin{displaymath}
    \zeta(A_m)\toas \zeta(A_{\infty})\quad\text{as }\; m\to\infty.
  \end{displaymath}
  The same holds for a nonincreasing sequence $(A_m)_{m\in\NN}$ from
  $\Borel_+(\sX)$ such that $0<\mu(A_\infty)$ with
  $A_\infty:=\cap_{m=1}^{\infty}A_m$.
\end{proposition}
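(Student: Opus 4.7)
The plan is to reduce to the finite ambient measure setting of Section~\ref{sec:iter-funct-space} and then apply Corollary~\ref{cor:monotone}, combining the resulting uniform tail bound with the trivial pointwise-in-$n$ stabilisation of finite iterations.

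Consider the nondecreasing case first. Since $\zeta(A)$ depends only on atoms of $\sP$ whose $x$-coordinate lies in $A$, and $A_m\subseteq A_\infty$ for every $m$, we may work throughout with the restricted Poisson process $\sP_{A_\infty}$; because $\mu(A_\infty)\in(0,\infty)$, this puts us in the finite-measure setting of Section~\ref{sec:iter-funct-space} with $A_\infty$ playing the role of $\sX$. Enumerating the atoms of $\sP_{A_\infty}$ as $(t_k,x_k,f_k)_{k\in\NN}$ in the increasing order of the $t_k$'s, we have $\zeta(A)=\lim_{n\to\infty}f^{1\uparrow n}_{A}(z_0)$ a.s.\ for every $A\subseteq A_\infty$, with $f_{k,A}$ defined as in \eqref{eq:f_i_A_definition}. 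Now the family $\sA:=\{A_m:m\in\NN\}\cup\{A_\infty\}$ is monotone (hence a Vapnik--\v{C}ervonenkis class of dimension at most $2$), contained in $A_\infty$ of finite measure, and satisfies $\inf_{A\in\sA}\mu(A)\geq\mu(A_1)>0$. Corollary~\ref{cor:monotone} therefore gives
\begin{equation}\label{eq:contplan-unif}
\sup_{A\in\sA}\bigl|\zeta(A)-f^{1\uparrow n}_{A}(z_0)\bigr|\toas 0 \quad\text{as } n\to\infty.
\end{equation}

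On the other hand, for each fixed $n\in\NN$ the finite iteration $f^{1\uparrow n}_{A_m}(z_0)$ is a.s.\ eventually equal to $f^{1\uparrow n}_{A_\infty}(z_0)$ as $m\to\infty$. Indeed, for every $k\leq n$ we have $\one_{\{x_k\in A_m\}}\to\one_{\{x_k\in A_\infty\}}$ as $m\to\infty$ (since $A_m\nearrow A_\infty$); these indicators take only two values, so they stabilise for all $m$ large enough, whence the marks $f_{k,A_m}$ coincide with $f_{k,A_\infty}$ simultaneously for $k=1,\dots,n$ once $m$ is sufficiently large, forcing equality of the compositions.

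Combining via the triangle inequality,
\begin{displaymath}
\bigl|\zeta(A_m)-\zeta(A_\infty)\bigr|\leq 2\sup_{A\in\sA}\bigl|\zeta(A)-f^{1\uparrow n}_{A}(z_0)\bigr|+\bigl|f^{1\uparrow n}_{A_m}(z_0)-f^{1\uparrow n}_{A_\infty}(z_0)\bigr|,
\end{displaymath}
and given $\eps>0$ we first pick $n$ large via \eqref{eq:contplan-unif} so that the supremum is below $\eps$ a.s., and then send $m\to\infty$ to annihilate the last term; since $\eps$ is arbitrary, $\zeta(A_m)\toas\zeta(A_\infty)$. The nonincreasing case proceeds identically after restricting to $\sP_{A_1}$ (permissible since $\mu(A_1)<\infty$): now $\sA$ is contained in $A_1$ with $\inf_m\mu(A_m)\geq\mu(A_\infty)>0$, Corollary~\ref{cor:monotone} again yields \eqref{eq:contplan-unif}, and the stabilisation of the indicators $\one_{\{x_k\in A_m\}}$ toward $\one_{\{x_k\in A_\infty\}}$ works the same way. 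The only genuinely technical point is justifying the reduction to a finite ambient measure space; once that is in place, the conclusion is a routine combination of a uniform tail bound with the continuity of finitely many iterations.
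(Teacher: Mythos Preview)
Your proof is correct and follows essentially the same route as the paper: a three-term triangle inequality, stabilisation of the finite iterations for fixed truncation level, and the uniform tail control from Corollary~\ref{cor:monotone}. The only cosmetic difference is that the paper phrases the truncation in terms of the time parameter $\zeta_t(A)$ rather than the iteration count $f^{1\uparrow n}_A(z_0)$, and leaves the reduction to the finite-measure ambient space implicit, whereas you spell it out.
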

\begin{proof}
  Assume that $(A_m)_{m\in\NN}$ is nondecreasing. For fixed $t>0$ we
  can write
  \begin{align*}
    |\zeta(A_{\infty})-\zeta(A_m)|
    & \leq |\zeta(A_{\infty})-\zeta_t(A_{\infty})|
    + |\zeta_t(A_{\infty})-\zeta_t(A_m)|+|\zeta_t(A_m)-\zeta(A_m)|\\
    &\leq |\zeta(A_{\infty})-\zeta_t(A_{\infty})|
    + |\zeta_t(A_{\infty})-\zeta_t(A_m)|
    +\sup_{ m\geq 1}|\zeta_t(A_m)-\zeta(A_m)|,
  \end{align*}
  where $\zeta_t$ was defined by \eqref{eq:1}. Letting $m\to\infty$
  yields
  \begin{displaymath}
    \limsup_{m\to\infty}|\zeta(A_{\infty})-\zeta(A_m)|
    \leq |\zeta(A_{\infty})-\zeta_t(A_{\infty})|
    + \sup_{ m\geq 1}|\zeta_t(A_m)-\zeta(A_m)|,
  \end{displaymath}
  since $N_{A_m}(t)\to N_{A_\infty}(t)$ a.s., and so
  $N_{A_n}(t)=N_{A_\infty}(t)$ a.s. for all sufficiently large
  $m$. Letting $t$ go to infinity and applying
  Corollary~\ref{cor:monotone} yield the desired statement.  The proof
  for nonincreasing sequences is similar.
\end{proof}

\begin{remark}
Proposition~\ref{thm:continuity-bis} also holds in the sense of
$L^p$-convergence  for $p\in\mathcal{I}$, see \eqref{eq:i_def} for the definition of $\mathcal{I}$.
\end{remark}

The recursive equation~\eqref{eq:two-dim-fixed-point} makes it
possible to obtain bounds on the increments of $\zeta$. Let $A\subset
B$ with $A,B\in\Borel_+(\sX)$, and let $U$ be distributed in $B$
according to the normalised $\mu$, see formula
\eqref{eq:distribution_of_U} with $\mathcal{X}=B$. Then
\begin{align*}
  |\zeta(B)-\zeta(A)|
  &\dsim|\left(f(\zeta(B))-f(\zeta(A))\right)\one_{\{U\in A\}}+
  (f(\zeta(A))-\zeta(A))\one_{\{U\in B\setminus A\}}|\\
  &\leq L_f|\zeta(B)-\zeta(A)|
  +|f(\zeta(A))-\zeta(A)|\one_{\{U\in B\setminus A\}},
\end{align*}
with independent $\zeta,f$ and $U$ on the right-hand side.  If $K_f=\E L_f<1$
and \eqref{eq:cond2} holds, then $\zeta(A)$ is integrable by
Proposition~\ref{prop:moments}. Since $\zeta(A)$ and $f(\zeta(A))$
share the same distribution, it holds
\begin{displaymath}
  \E|\zeta(B)-\zeta(A)|
  \leq \frac{\mu(B)-\mu(A)}{(1-K_f)\mu(B)} \E|f(\zeta(A))-\zeta(A)|.
\end{displaymath}
Note that the latter expectation does not depend on $A$, since the
distribution of $\zeta(A)$ does not depend on $A$. A similar estimate
can also be written for $p$-th moment, $p>0$, assuming $\Phi(p)<1$ and
$\E|f(z_0)-z_0|^p<\infty$.

\subsection{Stochastic processes on the half-line}
\label{sec:stoch-proc-half}

In the special case of $\mu$ being the Lebesgue measure on
$\sX=\R_+$, we obtain the following result.

\begin{proposition}
  \label{prop:continuity-process}
  The process $(\zeta(x))_{x>0}$ is a.s.\ continuous at every fixed
  (nonrandom) $x>0$; it has c\`adl\`ag paths and it is not pathwise
  continuous and is not pathwise monotone.
\end{proposition}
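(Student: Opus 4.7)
The plan is to handle the four assertions in turn, with the common tool being the uniform convergence from Corollary~\ref{cor:monotone} applied to the monotone family $\sA = \{[0,y] : y \in [a,b]\}$ for $0 < a \le b < \infty$: after the evident identification of the Poisson and sequence formulations (identity insertions do not affect compositions), it gives
\begin{displaymath}
  \sup_{y \in [a,b]} |\zeta_T(y) - \zeta(y)| \toas 0 \quad \text{as } T \to \infty.
\end{displaymath}

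For almost sure continuity at a fixed $x > 0$, I would use that $\sP$ has intensity $\leb \otimes \leb \otimes \nu$, so almost surely no atom has spatial coordinate equal to $x$. For each $T > 0$, the finitely many atoms with $t_i \le T$ project to a finite subset of $\R_+$ avoiding $x$, hence there is a random $\delta_T > 0$ such that $\zeta_T(y) = \zeta_T(x)$ for all $y \in [x-\delta_T,\,x+\delta_T]$. A three-$\eps$ estimate combining this identity with the uniform convergence on a fixed neighborhood of $x$ then yields $\zeta(y) \to \zeta(x)$ as $y \to x$ a.s. For the c\`adl\`ag property, I would observe that for each realization and each $T$ the map $y \mapsto \zeta_T(y)$ is piecewise constant on $(0, \infty)$ with finitely many jumps on every compact subinterval (located at $x_i$ for atoms with $t_i \le T$), and right-continuous at each one since the atom is included exactly when $y$ reaches $x_i$. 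Because the space of c\`adl\`ag functions on $[a,b]$ is closed under uniform convergence, $\zeta$ is c\`adl\`ag on $(0, \infty)$.

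For the last two assertions, I would analyse the jumps of $\zeta$ at the Poisson $x$-coordinates. Decomposing $\zeta_T(x_i) = g_L \circ f_i \circ g_R(z_0)$ and $\zeta_T(x_i^-) = g_L \circ g_R(z_0)$, where $g_L$ and $g_R$ are compositions of atoms with $x_j < x_i$ and $t_j < t_i$ or $t_j > t_i$ respectively, and passing to the limit $T \to \infty$, the jump of $\zeta$ at $x_i$ equals $g_L(f_i(w)) - g_L(w)$ for the tail value $w = g_R(z_0)$. Under the implicit non-degeneracy $\nu \ne \delta_{\Id}$ these jumps are nonzero with positive probability; since almost surely every compact subinterval of $(0, \infty)$ contains infinitely many Poisson atoms, a Borel--Cantelli / $0$-$1$ law argument supplies infinitely many nonzero jumps, ruling out pathwise continuity. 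Non-monotonicity follows by contradiction: if $\zeta$ were a.s.~monotone, Proposition~\ref{prop:same-distr} together with any bounded strictly monotone test function would force $\zeta(y) = \zeta(x)$ a.s.~for all rational $y > x$, and the c\`adl\`ag property established above would then upgrade this to $\zeta$ being a.s.~constant, contradicting the non-trivial jumps just produced.

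The principal obstacle will be to verify that the jumps $g_L(f_i(w)) - g_L(w)$ are genuinely non-degenerate for sufficiently many atoms. Conditioning on the Poisson configuration away from a neighborhood of $x_i$ and exploiting the independence of the mark $f_i$ together with the non-degeneracy of $\nu$ should suffice. An appealing alternative is to use the scale invariance from Theorem~\ref{thr:s-sim} to reduce the question to a single interval and then invoke the bivariate distributional identity~\eqref{eq:two-dim-fixed-point}, which makes the joint law of $(\zeta(x_i^-), \zeta(x_i))$ transparent.
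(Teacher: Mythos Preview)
Your treatment of the first two assertions is essentially the paper's: the c\`adl\`ag property is derived exactly as you propose (the prelimit $\zeta_T$ is c\`adl\`ag and the uniform convergence of Corollary~\ref{cor:monotone} transfers this), and for continuity at a fixed $x$ the paper uses Proposition~\ref{thm:continuity-bis} together with $\zeta([0,x)) = \zeta([0,x])$ a.s., which is the same idea you sketch in a slightly different packaging.

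For the failure of pathwise continuity, however, the paper sidesteps the obstacle you flag by a clever choice of which atom to look at. Rather than a generic $x_i$, it takes a \emph{lower record} of the planar projection: an atom $(t,x)$ such that the rectangle $[0,t]\times[0,x]$ contains no other atom. At such an $x$ the left prefix $g_L$ in your decomposition is the identity, so the jump is simply $f(\zeta(x-)) - \zeta(x-)$ with $f$ the mark at $(t,x)$, independent of $\zeta(x-)$, and non-degeneracy of $\nu$ immediately gives a nonzero jump with positive probability. This avoids entirely the need to control $g_L(f_i(w)) - g_L(w)$ for a nontrivial $g_L$, and dispenses with the Borel--Cantelli/conditioning programme you outline (which, as stated, is incomplete: the jumps at different atoms are not independent, so the second Borel--Cantelli lemma does not apply directly).

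For non-monotonicity you take a genuinely different route from the paper. The paper argues that the jump $f(\zeta(x-)) - \zeta(x-)$ has mean zero by the fixed-point equation~\eqref{eq:fixed-point}, hence cannot have constant sign; this tacitly uses integrability of $\zeta$. Your argument---identically distributed marginals plus monotonicity forces a.s.\ constancy, contradicting the jumps---is arguably cleaner and does not require any moment assumption beyond what is already in force. Both are valid; yours is slightly more general.
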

\begin{proof}
  Fix arbitrary $x>0$. Let $(x_n)_{n\geq 1}$ and $(x'_n)_{n\geq 1}$ be
  sequences such that $x_n\downarrow x$ and $x'_n\uparrow x$ as
  $n\to\infty$. By Proposition~\ref{thm:continuity-bis}, we obtain
  \begin{displaymath}
    \zeta([0,x_n))\toas \zeta([0,x])\quad\text{and}\quad
    \zeta([0,x'_n))\toas \zeta([0,x))\quad \text{as }\;n\to\infty.
  \end{displaymath}
  Since $\zeta(x-):=\zeta([0,x))=\zeta(x)$~a.s. we deduce the first
  property. 

  The process $\zeta_n(x)$ from \eqref{eq:19} is c\`adl\`ag in $x$ by
  construction, being a composition of c\`{a}dl\`{a}g functions
  $f_{i,[0,x]}$.  The uniform convergence established in
  Corollary~\ref{cor:monotone} yields that $(\zeta(x))_{x>0}$
  has c\`adl\`ag paths a.s.~on $[a,b]$ for any $0<a<b$, being the
  uniform limit of c\`adl\`ag functions.

  To show that it is not pathwise continuous, consider the projection
  $\{(t_i,x_i):i\geq 1\}$ of the Poisson process $\sP$ and take an
  atom\footnote{Such atoms are usually called (lower) records of the
    point process.}  $(t,x)$ such that the rectangle
  $[0,t]\times[0,x]$ does not contain other atoms of
  $\{(t_i,x_i):i\geq 1\}$. Then $\zeta$ is not left-continuous at
  $x$. More precisely, for such (random) $x$ we have
  $\zeta(x)=f(\zeta(x-))$, where $f$ is the mark of the atom at
  $(t,x)$. Since the jump $f(\zeta)-\zeta$ has expectation zero, see
  \eqref{eq:fixed-point}, the process is not pathwise monotone.
\end{proof}

Our next result concerns the total variation of $\zeta$.

\begin{theorem}
  \label{thm:total_variation}
  The total variation of the process $(\zeta(x))_{x>0}$ is
  a.s.~finite on every interval $[a,b]$ with $a>0$.
\end{theorem}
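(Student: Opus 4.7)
The plan is to show $\E V[\zeta;a,b]^p<\infty$ for a suitable $p\in(0,1]$, which forces $V[\zeta;a,b]<\infty$ a.s. Since $\Phi(0)=1$, $\Phi'(0)=\E\log L_f<0$ and $\Phi$ is continuous at zero, one may choose $p\in(0,1]\cap\mathcal{I}$, i.e.\ with $\Phi(p)=\E L_f^p<1$. Applying Corollary~\ref{cor:monotone} to the nondecreasing family $\{[0,x]:x\in[a,b]\}$, the approximation $\zeta_T(x):=\zeta_T([0,x])$ from \eqref{eq:1} converges uniformly on $[a,b]$ to $\zeta$ as $T\to\infty$. For any partition $y_0<\cdots<y_n$ of $[a,b]$,
\begin{displaymath}
\sum_k|\zeta(y_{k+1})-\zeta(y_k)|=\lim_{T\to\infty}\sum_k|\zeta_T(y_{k+1})-\zeta_T(y_k)|\le\liminf_{T\to\infty}V[\zeta_T;a,b],
\end{displaymath}
so taking the supremum over partitions yields $V[\zeta;a,b]\le\liminf_T V[\zeta_T;a,b]$, and by Fatou's lemma $\E V[\zeta;a,b]^p\le\liminf_T\E V[\zeta_T;a,b]^p$. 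It thus suffices to bound $\E V[\zeta_T;a,b]^p$ uniformly in $T$.

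For each fixed $T$, only finitely many atoms of $\sP$ lie in $[0,T]\times[0,b]\times\sG$ a.s., so $\zeta_T$ is a genuine step function on $[a,b]$ and $V[\zeta_T;a,b]=\sum_i|\Delta_i\zeta_T|$, summed over atoms $(t_i,x_i,f_i)\in\sP$ with $t_i\le T$ and $x_i\in[a,b]$. Let $g_i^T$ denote the backward composition in $t$-order of the marks $f_j$ with $t_j<t_i$, $x_j\le x_i$, $t_j\le T$, and let $h_i^T$ denote the backward-iteration limit over those $f_j$ with $t_j>t_i$, $x_j\le x_i$, $t_j\le T$, applied to $z_0$. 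The Lipschitz property gives
\begin{displaymath}
|\Delta_i\zeta_T|=|g_i^T(f_i(h_i^T))-g_i^T(h_i^T)|\le L_{g_i^T}\,|f_i(h_i^T)-h_i^T|,
\end{displaymath}
and by the Poisson property $L_{g_i^T}$, $f_i$ and $h_i^T$ are mutually independent, as they depend on disjoint regions of $\sP$ and disjoint marks.

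Next, using $p$-subadditivity $(\sum a_k)^p\le\sum a_k^p$ for $p\in(0,1]$ together with the Mecke formula for $\sP$ restricted to $[0,T]\times[a,b]\times\sG$,
\begin{displaymath}
\E V[\zeta_T;a,b]^p \le \E\sum_i|\Delta_i\zeta_T|^p = \int_0^T\!\!\int_a^b\!\!\int_{\sG}\E\bigl[L_{g^T}^p\,|f(h^T)-h^T|^p\bigr]\,\nu(\diff f)\,\diff x\,\diff t,
\end{displaymath}
where the integrand is computed by appending an atom at $(t,x,f)$ to an independent copy of $\sP$. Conditionally, the number of ``pre''-atoms is Poisson$(tx)$ with i.i.d.\ $L_f$-marks, so $\E L_{g^T}^p=\exp(tx(\Phi(p)-1))$. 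The remaining factor $\E|f(h^T)-h^T|^p$ is bounded uniformly in $T,t,x$ by a constant $C$: by the triangle inequality $|f(h)-h|\le(L_f+1)|h-z_0|+|f(z_0)-z_0|$ combined with a perpetuity-type estimate $\sup_{T,t,x}\E|h^T-z_0|^p\le\E|f(z_0)-z_0|^p/(1-\Phi(p))$, obtained as in the proof of Proposition~\ref{prop:moments} after passing each step through $(\sum a_k)^p\le\sum a_k^p$, noting that $\E|f(z_0)-z_0|^p<\infty$ by \eqref{eq:cond2} and Jensen's inequality (since $p\le 1$). Hence
\begin{displaymath}
\E V[\zeta_T;a,b]^p \le C\int_0^\infty\!\!\int_a^b e^{-tx(1-\Phi(p))}\,\diff x\,\diff t = \frac{C\log(b/a)}{1-\Phi(p)}<\infty,
\end{displaymath}
uniformly in $T$, from which the theorem follows.

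The main obstacle is the need to work with $p<1$: assumption \eqref{eq:cond1} permits $K_f=\E L_f\ge 1$, in which case the naive first-moment bound $\E L_{g^T}\le e^{tx(K_f-1)}$ is not integrable in $t$ over $[0,\infty)$. Replacing $K_f$ by $\Phi(p)<1$ restores integrability, but this forces every first-moment estimate in the perpetuity argument, in particular the uniform bound on $\E|h^T-z_0|^p$, to be redone through the subadditivity of $t\mapsto t^p$ rather than the triangle inequality; keeping track of independence via the Palm interpretation of the Poisson process is the other piece that makes the Mecke step go through cleanly.
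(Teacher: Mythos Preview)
Your argument is correct and proceeds along a genuinely different route from the paper's. The paper gives a purely almost-sure bound: for a partition $(y_j)$ it introduces $\tau_j=\min\{k:x_k\in(y_j,y_{j+1}]\}$, uses the Lipschitz property and \eqref{eq:key_tail_estimate} to bound each increment by $2\sum_{i\ge\tau_j}Q_i\sup_{x\in[a,1]}\prod_{k<i}L_{f_{k,x}}$, then applies the combinatorial observation $\sum_j\one_{\{\tau_j\le i\}}\le i$ to get a partition-free random majorant $2\sum_i i\,Q_i\sup_x\prod_{k<i}L_{f_{k,x}}$, whose a.s.\ convergence is checked by Cauchy's radical test together with Lemma~\ref{lem:glivenko-cantelli-bis}. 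No moment is computed.

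Your approach instead controls the $p$-th moment of the total variation of the step-function approximation $\zeta_T$ via the Mecke formula, and passes to the limit by Fatou. This is more quantitative: it produces directly the bound $\E V[\zeta;a,b]^p\le C\log(b/a)/(1-\Phi(p))$, which the paper obtains only later (Proposition~\ref{prop:exp-tv}), and only after Theorem~\ref{thm:total_variation} and the jump representation of $\zeta$ are already in place. In that sense your proof merges the two results into one. Conversely, the paper's pathwise bound gives an explicit random majorant for $V[\zeta;a,b]$ itself rather than a power of it, which is handy if one later wants to upgrade integrability under stronger moment assumptions on $L_f$ and $|f(z_0)-z_0|$; and it avoids appealing to Palm calculus, relying only on the empirical-process machinery already developed for Theorem~\ref{thr:uniform}. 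Two minor wording points: your $h_i^T$ is a finite composition, not a ``limit''; and $x_j\le x_i$ should read $x_j<x_i$, though this is immaterial since ties have probability zero.
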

\begin{proof}
  Without loss of generality we may consider an interval $[a,1]$ for a
  fixed $a\in (0,1)$. Fix an arbitrary partition $a=y_0<y_1<y_2<\cdots
  <y_{m}=1$, and let
  \begin{displaymath}
    \tau_j=\tau(y_j,y_{j+1})
    :=\inf\{k\geq 1:x_k\in (y_j,y_{j+1}]\},\quad j=0,1,\dots,m-1,
  \end{displaymath}
  be the index of the first point in $\sP$ such that the second
  coordinate of this point falls in $(y_j,y_{j+1}]$. Note that, for
  every fixed $i\in\NN$,
  \begin{align}
    \label{eq:bound_total_variation_proof1}
    \sum_{j=0}^{m-1}\one_{\{\tau_j\leq i\}}
    =\sum_{j=0}^{m-1}\sum_{k=1}^{i}\one_{\{\tau_j=k\}}
    &\leq \sum_{j=0}^{m-1}\sum_{k=1}^{i}\one_{\{x_k\in
      (y_j,y_{j+1}]\}}\notag \\
    &=\sum_{k=1}^{i}\sum_{j=0}^{m-1}\one_{\{x_k\in (y_j,y_{j+1}]\}}
    =\sum_{k=1}^{i}\one_{\{x_k\in (a,1]\}}\leq i.
  \end{align}
  Let us now consider the increments $\zeta(y_{j+1})-\zeta(y_j)$ for
  $j=0,\dots,m-1$. Write $(f_{i,y})_{i\in\NN}$ for the i.i.d.~copies of the
    function $f_y:=f_{[0,y]}$ from \eqref{eq:f_i_A_definition}. We have
  \begin{align*}
    |\zeta(y_{j+1})-\zeta(y_j)|&\leq |\zeta(y_{j+1})
    -f^{1\uparrow (\tau_j-1)}_{y_j}(z_0)|
    +|\zeta(y_j)-f^{1\uparrow (\tau_j-1)}_{y_j}(z_0)|\\
    &=|\zeta(y_{j+1})-f^{1\uparrow (\tau_j-1)}_{y_{j+1}}(z_0)|
    +|\zeta(y_j)-f^{1\uparrow (\tau_j-1)}_{y_j}(z_0)|,
  \end{align*}
  where the second equality holds because $f_{k,y_{j+1}}=f_{k,y_j}$
  for $k<\tau_j$ by the definition of $\tau_j$. By the
  Lipschitz property, 
\begin{multline*}
    |\zeta(y_{j+1})-\zeta(y_j)| \leq L_{f_{1,y_{j+1}}}\cdots L_{f_{\tau_j-1,y_{j+1}}}
    |f^{\tau_j\uparrow\infty}_{y_{j+1}}(z_0)-z_0|\\
    +L_{f_{1,y_{j}}}\cdots L_{f_{\tau_j-1,y_{j}}}
    |f^{\tau_j\uparrow\infty}_{y_{j}}(z_0) -z_0|,
\end{multline*}
  and, with the help of \eqref{eq:key_tail_estimate},
  \begin{equation}
    \label{eq:7}
    |\zeta(y_{j+1})-\zeta(y_j)|
    \leq \sum_{i=\tau_j}^{\infty}Q_i\prod_{k=1}^{i-1}L_{f_{k,y_{j+1}}}
    +\sum_{i=\tau_j}^{\infty}Q_i\prod_{k=1}^{i-1}L_{f_{k,y_{j}}}
    \leq 2\sum_{i=\tau_j}^{\infty}Q_i\left(\sup_{x\in [a,1]}
      \prod_{k=1}^{i-1}L_{f_{k,x}}\right),
  \end{equation}
  where $Q_i:=|f_i(z_0)-z_0|$ as in the proof of
  Theorem~\ref{thr:uniform}.  Summing over $j=0,\dots,m-1$ and
  subsequently taking the supremum over the set $P$ of all partitions,
  we deduce
  \begin{align}
    \sup_P \sum_{j=0}^{m-1}|\zeta(y_{j+1})-\zeta(y_j)|
    &\leq \sup_P \sum_{i=1}^{\infty}Q_i\left(\sup_{x\in
        [a,1]}\prod_{k=1}^{i-1}L_{f_{k,x}}\right)\sum_{j=0}^{m-1}\one_{\{\tau_j\leq
      i\}}\notag \\
    &\leq \sum_{i=1}^{\infty}iQ_i
    \left(\sup_{x\in [a,1]}\prod_{k=1}^{i-1}L_{f_{k,x}}\right),
  \end{align}
  where \eqref{eq:bound_total_variation_proof1} yields the last
  bound. It remains to note that the series on the right-hand side
  converges a.s. by the Cauchy radical test using the same reasoning
  as in the proof of Theorem~\ref{thr:uniform} in conjunction with a
  trivial observation $i^{1/i}\to1$ as $i\to\infty$. The proof is
  complete.
\end{proof}

Inequality \eqref{eq:7} provides an upper a.s. bound on the increments
of the process $\zeta$ in terms of the tail of a convergent series.

Denote by $V_p(\zeta;[a,b])$ the $p$-variation of $\zeta$ over the
interval $[a,b]$.  The next result demonstrates that the $p$-variation
of $\zeta$ is integrable. Recall that the set $\mathcal{I}$ was
defined in \eqref{eq:i_def}.

\begin{proposition}
  \label{prop:exp-tv}
  For every $p\in\mathcal{I}\cap (0,1]$ and $0<a\leq b$ we have 
  \begin{equation}
    \label{eq:22}
    \E V_p(\zeta;[a,b])
    \leq (\log (b/a))\frac{2\E|Z_{\infty}|^{p}}{1-\Phi(p)}.
  \end{equation}
\end{proposition}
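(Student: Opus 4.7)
The approach is to derive a bound on the increment $\E|\zeta(v)-\zeta(u)|^p$ for $0<u<v$ from the two-dimensional identity \eqref{eq:two-dim-fixed-point}, sum it over a partition of $[a,b]$, and pass to the supremum by monotone convergence along a refining sequence.

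First, applying \eqref{eq:two-dim-fixed-point} with $A_1=[0,u]\subset A_2=[0,v]$ makes the third indicator vanish and yields
\[
\zeta(v)-\zeta(u) \dsim (f(\zeta(v))-f(\zeta(u)))\one_{\{x_*\leq u\}} + (f(\zeta(v))-\zeta(u))\one_{\{u<x_*\leq v\}},
\]
with $f$ independent of $(\zeta(u),\zeta(v))$ and $\P\{x_*\leq u\}=u/v$. Combining the Lipschitz estimate $|f(\zeta(v))-f(\zeta(u))|\leq L_f|\zeta(v)-\zeta(u)|$ with the subadditivity $(a+b)^p\leq a^p+b^p$ (valid for $p\in(0,1]$), the $p$-triangle inequality $|x-y|^p\leq|x|^p+|y|^p$, and the identities $\zeta(u)\dsim Z_\infty$ and $f(\zeta(v))\dsim Z_\infty$ (the latter via \eqref{eq:fixed-point} and independence of $f$ from $\zeta(v)$), taking expectations gives
\[
\E|\zeta(v)-\zeta(u)|^p \leq \Phi(p)\frac{u}{v}\E|\zeta(v)-\zeta(u)|^p + \frac{v-u}{v}\cdot 2\E|Z_\infty|^p,
\]
which rearranges to $\E|\zeta(v)-\zeta(u)|^p \leq 2(v-u)\E|Z_\infty|^p/(v-\Phi(p)u)$.

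Second, summing this bound over any partition $a=y_0<y_1<\cdots<y_m=b$ and using $y_{j+1}-\Phi(p)y_j\geq y_{j+1}(1-\Phi(p))$ together with $1-y_j/y_{j+1}\leq\log(y_{j+1}/y_j)$ followed by telescoping produces the partition-independent bound
\[
\E\sum_{j=0}^{m-1}|\zeta(y_{j+1})-\zeta(y_j)|^p \leq \frac{2\E|Z_\infty|^p}{1-\Phi(p)}\log(b/a).
\]

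Third, fix a nested dyadic refining sequence $(P_n)$ of partitions of $[a,b]$ with mesh going to zero. For $p\in(0,1]$ the sums $S_n:=\sum_{P_n}|\zeta(y_{j+1})-\zeta(y_j)|^p$ are almost surely non-decreasing in $n$; set $L:=\lim_n S_n\leq V_p(\zeta;[a,b])$. For any fixed finite partition $Q$ of $[a,b]$, the refinement $Q\cup P_n$ exceeds $S_n$ by the sum (over $q\in Q\setminus P_n$, with $y_{\pm}^n$ the adjacent $P_n$-points to $q$) of excess terms of the form $|\zeta(q)-\zeta(y_{-}^n)|^p+|\zeta(y_{+}^n)-\zeta(q)|^p-|\zeta(y_{+}^n)-\zeta(y_{-}^n)|^p$, and each such excess vanishes as $n\to\infty$ by the c\`adl\`ag property of $\zeta$ (Proposition~\ref{prop:continuity-process}). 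Hence $\sum_Q|\zeta(y_{j+1}^Q)-\zeta(y_j^Q)|^p\leq L$ for every $Q$, giving $V_p(\zeta;[a,b])\leq L$, and therefore $L=V_p(\zeta;[a,b])$ a.s. Monotone convergence combined with the uniform bound of the previous step then yields the desired inequality. The main technical hurdle is this identification $L=V_p(\zeta;[a,b])$, which relies on the c\`adl\`ag property to absorb non-dyadic partition points in the limit.
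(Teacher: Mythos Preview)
Your proof is correct and takes a genuinely different route from the paper's.

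The paper exploits the fact that $\zeta$ is a pure jump process with jumps exactly at the spatial coordinates $(x_k)_{k\geq 1}$ of the Poisson atoms, so that $V_p(\zeta;[a,1])=\sum_{x_k\in[a,1]}|\zeta(x_k)-\zeta(x_k-)|^p$. It then bounds each jump via the representation $\zeta(x_k)-\zeta(x_k-)\dsim f^{1\uparrow(\theta_k+1)}(Z_\infty)-f^{1\uparrow\theta_k}(Z_\infty)$ with $\theta_k=\sum_{j<k}\one_{\{x_j<x_k\}}$, uses the Lipschitz estimate to get $\E|\zeta(x_k)-\zeta(x_k-)|^p\one_{\{x_k>a\}}\leq 2\E|Z_\infty|^p\,\E\big[(\Phi(p))^{\theta_k}\one_{\{x_k>a\}}\big]$, computes the latter expectation exactly by conditioning on the uniform variable $x_k$, and sums the resulting geometric-type series to $(-\log a)\cdot 2\E|Z_\infty|^p/(1-\Phi(p))$.

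Your argument instead bounds the $p$-th moment of a generic increment through the two-dimensional fixed-point identity, sums over arbitrary partitions (the telescoping via $1-y_j/y_{j+1}\leq\log(y_{j+1}/y_j)$ is a nice touch), and then upgrades the uniform-in-partition bound on $\E\sum$ to a bound on $\E\sup\sum$ by a monotone-convergence/c\`adl\`ag argument. This is more self-contained: it does not rely on the pure-jump structure of $\zeta$ (which the paper uses implicitly, relying on Theorem~\ref{thm:total_variation} and the fact that the approximants $\zeta_n$ are step functions), and it does not require the explicit jump-size representation. The paper's approach, on the other hand, gives more information, since it identifies the jump law at each $x_k$ and actually yields an equality for $\E\sum_{x_k\in[a,1]}|\Delta\zeta(x_k)|^p$ rather than only an upper bound.
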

\begin{proof}
  First of all note that $\E|Z_{\infty}|^{p}$ is finite by
  Proposition~\ref{prop:moments}. Further, it suffices to prove the
  statement for the interval $[a/b,1]$ with $0<a<b$. The scale
  invariance property then yields the desired result for the interval
  $[a,b]$. Thus, without loss of generality, assume that $b=1$ and
  $a\in (0,1)$.

  The process $(\zeta(x))_{x\in (0,1]}$ has a countable dense set of
  jumps occurring at points $(x_k)_{k\geq 1}$, where the enumeration
  $(t_k,x_k,f_k)_{k\geq 1}$ of the atoms of $\sP$ is such that
  $0<t_1<t_2<t_3<\cdots$ and $x_k\in [0,1]$ a.s.

  The size of a jump at $x_k\in[0,1]$, $k\in\NN$, depends on the
  number of iterations applied before $f_k$. More precisely,
  \begin{displaymath}
    \zeta(x_k)-\zeta(x_k-)\dsim f^{1\uparrow (\theta_k+1)}(Z_\infty)
    -f^{1\uparrow\theta_k}(Z_\infty), \quad k\in\NN,
  \end{displaymath}
  where $\theta_k:=\sum_{j=1}^{k-1}\one_{\{x_j<x_k\}}$, and $Z_\infty$
  is independent of $(f_j)_{j\leq k}$ and distributed like $\zeta(x)$
  for any $x$. Taking the expectation and using independence we obtain
  \begin{align*}
    \E |\zeta(x_k)-\zeta(x_k-)|^{p}\one_{\{x_k>a\}}
    &\leq \E \left((\Phi(p))^{\theta_k}\one_{\{x_k>a\}}\right)
    \E|f(Z_\infty)-Z_\infty|^{p}\\
    &\leq \E \left((\Phi(p))^{\theta_k}\one_{\{x_k>a\}}\right)
    2\E|Z_\infty|^{p},
  \end{align*}
  where \eqref{eq:fixed-point} has been utilised on the last
  step. Since $x_k$ has the uniform distribution on $[0,1]$, 
  \begin{multline}
    \label{eq:average_variation_calculation1}
    \E\left((\Phi(p))^{\theta_k}\one_{\{x_k>a\}}\right) 
    = \sum_{j=0}^{k-1}\binom{k-1}{j}(\Phi(p))^j \int_a^1 y^j
    (1-y)^{k-1-j}\diff y\\
    =\int_a^1 (1-y+y(\Phi(p)))^{k-1}\diff y
    =\frac{(1-(1-(\Phi(p)))a)^k-(\Phi(p))^k}{k(1-(\Phi(p)))}.
  \end{multline}
  Finally, note that 
  \begin{displaymath}
    \E \sum_{x_k\in [a,1]}|\zeta(x_k)-\zeta(x_k-)|^{p}
    =(-\log a)\frac{2\E|Z_{\infty}|^{p}}{1-\Phi(p)}. \qedhere
  \end{displaymath}
\end{proof}

By the subadditivity of the function $t\mapsto t^p$, Proposition~\ref{prop:exp-tv} implies that the total variation of $\zeta$ over $[a,b]$ is $p$-integrable with a bound on its $p$th moment given by the right-hand side of \eqref{eq:22}.

For each $\eps>0$, the set $J:=\{x>0:\;|\zeta(x)-\zeta(x-)|\geq\eps\}$
of jumps of size at least $\eps$ is a scale invariant point process on
$(0,\infty)$. Indeed, since the $p$-variation of $\zeta$ is
finite on any interval $[a,b]$ with $0<a<b<\infty$ and sufficiently
small $p>0$, the number of points in $J\cap[a,b]$ is a.s.~finite. For $c>0$,
$cJ$ is the functional of $(\zeta(c^{-1}x))_{x>0}$, which coincides in
distribution with $(\zeta(x))_{x>0}$, hence $J$ is scale invariant.

\subsection{Integration with respect to \texorpdfstring{$\zeta$}{zeta}}
\label{sec:integr-with-resp}

Since the process $\zeta$ has finite total variation, it is possible
to integrate continuous functions with respect to $\zeta$ on 
intervals bounded from $0$ in the sense of Riemann--Stieltjes
integration. For arbitrary $0<a<b<\infty$ and continuous $h:[a,b]\to
\R$ we have
\begin{equation}
  \label{eq:integration_with_respect_to_zeta}
  \int_{(a,b]}h(x)\diff \zeta(x)
  =\sum_{x_k\in (a,b]}h(x_k)(\zeta(x_k)-\zeta(x_k-)),
\end{equation}
and the series on the right-hand side converges absolutely a.s.

The following proposition shows that one can also integrate over
intervals $(0,b]$ provided $h$ satisfies an additional integrability
assumption. 

\begin{proposition}
  \label{prop:integrator}
  Let $h:[0,1]\mapsto\R$ be a continuous function such that
  $\int_0^1|h(t)|^{p} t^{-1}\diff t<\infty$ for some
  $p\in\mathcal{I}\cap (0,1]$. Then the limit $\int_{(0,1]}h(x)\diff
  \zeta(x)$ of \eqref{eq:integration_with_respect_to_zeta} as
  $a\downarrow 0$ exists a.s. and in $L^{p}$.
\end{proposition}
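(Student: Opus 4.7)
The plan is to reduce the question to an $L^p$-Cauchy estimate on the series of jumps in $(a,a']\subset(0,1]$ and bound this with the same Beta-integration trick already used in Proposition~\ref{prop:exp-tv}. For $0<a<a'\leq 1$, the total variation of $\zeta$ being finite on $[a,1]$ (Theorem~\ref{thm:total_variation}) means that the Riemann--Stieltjes integral $\int_{(a,1]}h(x)\diff \zeta(x)$ is already well-defined as the absolutely convergent jump series in \eqref{eq:integration_with_respect_to_zeta}, and
\begin{displaymath}
  \int_{(a,1]}h(x)\diff\zeta(x) - \int_{(a',1]}h(x)\diff\zeta(x)
  = \sum_{x_k\in(a,a']}h(x_k)(\zeta(x_k)-\zeta(x_k-)).
\end{displaymath}

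Applying the subadditivity $|u+v|^p\leq |u|^p+|v|^p$ (valid for $p\in(0,1]$) gives
\begin{displaymath}
  \E\left|\sum_{x_k\in(a,a']}h(x_k)(\zeta(x_k)-\zeta(x_k-))\right|^p
  \leq \sum_{k=1}^{\infty}\E\left[|h(x_k)|^p|\zeta(x_k)-\zeta(x_k-)|^p\one_{\{a<x_k\leq a'\}}\right].
\end{displaymath}
Conditioning on $x_k$ and $\theta_k=\sum_{j<k}\one_{\{x_j<x_k\}}$ and invoking the distributional identity $\zeta(x_k)-\zeta(x_k-)\dsim f^{1\uparrow(\theta_k+1)}(Z_\infty)-f^{1\uparrow\theta_k}(Z_\infty)$ used in the proof of Proposition~\ref{prop:exp-tv}, together with the Lipschitz bound, yields
\begin{displaymath}
  \E\bigl[|\zeta(x_k)-\zeta(x_k-)|^p\,\big|\,x_k,\theta_k\bigr]\leq (\Phi(p))^{\theta_k}\cdot 2\E|Z_\infty|^p.
\end{displaymath}
Since $x_k$ is uniform on $[0,1]$ and $\theta_k\mid x_k=y\sim\mathrm{Bin}(k-1,y)$, summation over $k$ as in \eqref{eq:average_variation_calculation1} telescopes to
\begin{displaymath}
  \sum_{k=1}^{\infty}\E\bigl[(\Phi(p))^{\theta_k}\one_{\{a<x_k\leq a'\}}\,\big|\,x_k\bigr]=\int_a^{a'}\frac{\diff y}{y(1-\Phi(p))}\quad(\text{integrand form}),
\end{displaymath}
so that, inserting the factor $|h(y)|^p$ inside the integral,
\begin{displaymath}
  \E\left|\sum_{x_k\in(a,a']}h(x_k)(\zeta(x_k)-\zeta(x_k-))\right|^p
  \leq \frac{2\E|Z_\infty|^p}{1-\Phi(p)}\int_a^{a'}\frac{|h(y)|^p}{y}\diff y.
\end{displaymath}
Finiteness of $\int_0^1|h(y)|^p y^{-1}\diff y$ makes the right-hand side tend to $0$ as $a,a'\downarrow 0$; hence $\{\int_{(a,1]}h(x)\diff\zeta(x)\}$ is Cauchy in $L^p$, and by completeness the $L^p$-limit exists.

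For the almost sure convergence, the very same bound applied with $a=0$, $a'=1$ shows
\begin{displaymath}
  \E\sum_{x_k\in(0,1]}|h(x_k)|^p|\zeta(x_k)-\zeta(x_k-)|^p
  \leq \frac{2\E|Z_\infty|^p}{1-\Phi(p)}\int_0^1\frac{|h(y)|^p}{y}\diff y<\infty,
\end{displaymath}
so this nonnegative series converges a.s. Subadditivity then gives
\begin{displaymath}
  \left|\sum_{x_k\in(a,a']}h(x_k)(\zeta(x_k)-\zeta(x_k-))\right|^p
  \leq \sum_{x_k\in(a,a']}|h(x_k)|^p|\zeta(x_k)-\zeta(x_k-)|^p\toas 0
\end{displaymath}
as $a,a'\downarrow 0$, so the partial sums are a.s.~Cauchy in $\R$ and converge. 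The a.s. and $L^p$ limits must agree. The main technical obstacle is simply organising the conditioning argument so that the Beta integral identity of Proposition~\ref{prop:exp-tv} applies cleanly with $|h(y)|^p$ as weight; everything else is bookkeeping.
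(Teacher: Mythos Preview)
Your argument is correct and follows essentially the same route as the paper: both reduce to bounding $\sum_k \E\bigl[|h(x_k)|^p|\zeta(x_k)-\zeta(x_k-)|^p\bigr]$ via the jump estimate $\E\bigl[|\zeta(x_k)-\zeta(x_k-)|^p\mid x_k,\theta_k\bigr]\leq 2(\Phi(p))^{\theta_k}\E|Z_\infty|^p$ and the binomial/Beta computation of Proposition~\ref{prop:exp-tv}, yielding the integral $\frac{2\E|Z_\infty|^p}{1-\Phi(p)}\int |h(y)|^p y^{-1}\diff y$. The only difference is packaging: the paper first proves the single master bound $\E\bigl(\sum_{x_k\in(0,1]}|h(x_k)||\zeta(x_k)-\zeta(x_k-)|\bigr)^p<\infty$ (giving absolute convergence of the jump series a.s., then dominated convergence for both modes), whereas you run the Cauchy criterion in $L^p$ and then separately for the a.s.\ part; both are equivalent rearrangements of the same estimate.
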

\begin{proof}
  It suffices to show that 
  \begin{equation}
    \label{eq:integration_proof_eq1}
    \E \left( \sum_{x_k\in(0,\,1]} |h(x_k)| 
      |\zeta(x_k)-\zeta(x_k-)|\right)^{p}<\infty.
  \end{equation}
  This immediately implies
  \begin{displaymath}
    \sum_{x_k\in(0,\,1]} |h(x_k)| |\zeta(x_k)-\zeta(x_k-)|
    <\infty \quad \text{a.s.}
  \end{displaymath}
  and, thus, by the dominated convergence
  \begin{displaymath}
    \sum_{x_k\in (a,\,1]} h(x_k) (\zeta(x_k)-\zeta(x_k-))
    \toas \sum_{x_k\in (0,\,1]} h(x_k)(\zeta(x_k)-\zeta(x_k-))
    \quad \text{as }\; a\downarrow 0.
  \end{displaymath}
  Furthermore, \eqref{eq:integration_proof_eq1} implies that
  \begin{displaymath}
    \left\|\int_{(a,\,1]}h(x)\diff \zeta(x)-\int_{(0,\,1]}h(x)
      \diff \zeta(x)\right\|_{p}
    \leq \E \left( \sum_{x_k\in(0,\,a]} |h(x_k)|
      |\zeta(x_k)-\zeta(x_k-)|\right)^{p}\to 0
  \end{displaymath}
  as $a\downarrow 0$.  To prove \eqref{eq:integration_proof_eq1}, note
  that subadditivity of $t\mapsto t^{p}$ yields
  \begin{displaymath}
    \E \left( \sum_{x_k\in(0,\,1]} |h(x_k)| |\zeta(x_k)-\zeta(x_k-)|\right)^{p}
    \leq \sum_{k=1}^\infty \E \left(|\zeta(x_k)-\zeta(x_k-)|^{p}
      \cdot |h(x_k)|^{p}\right).
  \end{displaymath}
  Let us prove that the series on the right-hand side converges. Using
  the same calculations as in
  \eqref{eq:average_variation_calculation1}, we derive
  \begin{align*}
    \E \left(|\zeta(x_k)-\zeta(x_k-)|^{p}\cdot |h(x_k)|^{p}\right)
    &\leq 2\E |Z_{\infty}|^{p}
    \E \left((\Phi(p))^{\theta_k}|h(x_k)|^{p}\right)\\
    &=2\E|Z_{\infty}|^{p}\int_0^1 (1-y+y\Phi(p))^{k-1}|h(y)|^{p}\diff y.
  \end{align*}
  The right-hand side is summable to $\frac{2\E
    |Z_{\infty}|^{p}}{1-\Phi(p)}\int_0^1 |h(y)|^{p}y^{-1}\diff y$,
  which is finite by assumptions. Note that $\E|Z_{\infty}|^{p}$ is
  finite by Proposition~\ref{prop:moments}, since $\Phi(p)<1$ and $\E
  |f(z_0)-z_0|^{p}<\infty$ in view of \eqref{eq:cond2}.
\end{proof}

Let us now turn to integration of $\zeta$ with respect to a continuous
deterministic function $h$. The easiest way to define such integrals
is via integration by parts, that is, for $0<a<b$ and continuous
$h:[a,b]\mapsto \R$, put
\begin{align}
  \label{eq:integration_of_zeta_def}
  \int_{(a,\,b]}\zeta(x)\diff h(x)&:=\zeta(b)h(b)-\zeta(a)h(a)
  -\int_{(a,b]}h(x)\diff \zeta(x) \notag \\
  &=\zeta(b)h(b)-\zeta(a)h(a)-\sum_{x_k\in
    (a,\,b]}h(x_k)(\zeta(x_k)-\zeta(x_k-)).
\end{align}
Under the additional integrability assumption, the definition can be
extended to $a=0$.

\begin{theorem}
  \label{thr:integral}
  Under assumptions of Proposition~\ref{prop:integrator} and assuming
  also that $h$ is continuous, the integral
  $\int_{(a,\,b]}\zeta(x)\diff h(x)$ converges in $L^{p}$ as
  $a\downarrow 0$ to a limit which is denoted by
  $\int_{(0,1]}h(x)\diff \zeta(x)$.
\end{theorem}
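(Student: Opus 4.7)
The plan is to use the integration-by-parts definition \eqref{eq:integration_of_zeta_def} and show that each of the three terms on the right-hand side converges in $L^p$ as $a\downarrow 0$. The term $\zeta(b)h(b)$ is independent of $a$, and the term $\int_{(a,b]}h(x)\diff\zeta(x)$ converges in $L^p$ to $\int_{(0,b]}h(x)\diff\zeta(x)$ by Proposition~\ref{prop:integrator} (the proof of which goes through verbatim for the interval $(0,b]$ in place of $(0,1]$, since the integrability hypothesis $\int_0^1 |h(t)|^p t^{-1}\diff t<\infty$ of course implies $\int_0^b|h(t)|^p t^{-1}\diff t<\infty$). So the only real task is to handle the boundary term $\zeta(a)h(a)$.

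First I would observe that the combination of continuity of $h$ at $0$ with the hypothesis $\int_0^1 |h(t)|^p t^{-1}\diff t<\infty$ forces $h(0)=0$. Indeed, if $|h(0)|>0$ then continuity gives some $\delta>0$ on which $|h(t)|\ge |h(0)|/2$, whence $\int_0^\delta|h(t)|^p t^{-1}\diff t=\infty$, a contradiction. Consequently $h(a)\to 0$ as $a\downarrow 0$.

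Next, since $\zeta(a)\dsim Z_\infty$ by Proposition~\ref{prop:same-distr}, and $\E|Z_\infty|^p<\infty$ by Proposition~\ref{prop:moments} (applicable because $p\in\mathcal{I}$ means $\Phi(p)<1$, and $\E|f(z_0)-z_0|^p<\infty$ follows from \eqref{eq:cond2} via Jensen for $p\leq 1$), we obtain
\begin{displaymath}
  \E|\zeta(a)h(a)|^p = |h(a)|^p\,\E|Z_\infty|^p \longrightarrow 0 \quad \text{as } a\downarrow 0.
\end{displaymath}
Finally, because $p\in(0,1]$, the map $t\mapsto t^p$ is subadditive, so $\E|X-Y|^p$ is subadditive in its arguments and $L^p$-convergence of a sum of sequences follows from $L^p$-convergence of each summand. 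Applying this to the three-term decomposition on the right-hand side of \eqref{eq:integration_of_zeta_def} completes the argument.

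There is no real obstacle here beyond the bookkeeping point that the integrability assumption, together with continuity, pins down $h(0)=0$; once that is noticed, the boundary term is trivially controlled and Proposition~\ref{prop:integrator} does the rest of the work.
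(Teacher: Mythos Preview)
Your proposal is correct and follows essentially the same approach as the paper's proof: reduce via the integration-by-parts formula \eqref{eq:integration_of_zeta_def} to Proposition~\ref{prop:integrator} plus the vanishing of the boundary term $\zeta(a)h(a)$, the latter handled by $\E|\zeta(a)|^p=\E|Z_\infty|^p<\infty$ and $h(a)\to h(0)=0$. You are in fact slightly more careful than the paper, which simply asserts $h(0)=0$ without comment, whereas you explicitly derive it from the integrability hypothesis $\int_0^1|h(t)|^p t^{-1}\,\diff t<\infty$ combined with continuity of $h$ at $0$.
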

\begin{proof}
  In view of Proposition~\ref{prop:integrator} and definition
  \eqref{eq:integration_of_zeta_def}, it suffices to check that
  \begin{displaymath}
    \lim_{a\downarrow 0}|h(a)|^{p}\E |\zeta(a)|^{p}=0.
  \end{displaymath}
  But this follows immediately from $\lim_{a\downarrow
    0}|h(a)|=|h(0)|=0$ and $\E |\zeta(a)|^{p}=\E
  |\zeta(1)|^{p}<\infty$, where finiteness is secured by
  Proposition~\ref{prop:moments}.
\end{proof}

In particular, if $\Phi(p)<1$ for some $p\in(0,1]$ then $\int_0^1\zeta(x)\diff
x$ is well defined in $L^p$. Hence, $\int_0^1\zeta(x)\diff
x$ is well defined in $L^p$ for all sufficiently small $p>0$.

\section{Markov property}
\label{sec:markov-property}

Assume that knowledge of the value $y=f(x)$ allows one to recover in a
unique way the value of the argument $x\in\R$ and also the realisation
of the random function $f(\cdot)$ with the distribution $\nu$. To get
a better understanding of this assumption, suppose for a moment that
$\nu$ is supported by a finite set of strictly monotone and strictly
contractive functions $h_1,\dots,h_m\in\sG$ which satisfy the {\it
  strong separation condition}. The latter means that the unique
attractor of the iterated function system $\{h_1,h_2,\dots,h_m\}$,
that is the unique nonempty compact set $\mathcal{K}$ such that
$$
\mathcal{K}=\cup_{i=1}^{m}h_i(\mathcal{K}),
$$
satisfies additionally $h_i(\mathcal{K})\cap
h_j(\mathcal{K})=\varnothing$ for $i\neq j$.  Thus, for every point
$y\in\mathcal{K}$ we can find the unique index $i$ such that $y\in
h_i(\mathcal{K})$, hence, uniquely recover a deterministic function
$h_i$, the realisation of $f$. Since $h_1,\ldots,h_m$ are assumed to
be strictly monotone, it is further possible to find the unique $x\in
\mathcal{K}$ such that $y=h_i(x)=f(x)$. Moreover, since the support of
$Z_{\infty}$, the limit of iterations \eqref{eq:11}, is equal to
$\mathcal{K}$, given the event $\{Z_{\infty}=z\}$ one can uniquely
determine the full (deterministic) sequence
$(g_{n}^{z})_{n\in\NN}\subset \sG$, such that
$$
 z=(g^{z})^{1\uparrow\infty}(z_0).
$$
A typical example of a random Lipschitz mapping satisfying the above
recovery property are Bernoulli convolutions with $\lambda < 1/2$, see
Example~\ref{ex:bernoulli_convolutions} and
Section~\ref{sec:bern-conv} below, for which
$$
m=2,\quad h_1(x)=\lambda x,\quad h_2(x)=\lambda x+1,\quad \nu(\{h_1\})= \nu(\{h_2\})=1/2.
$$
An example where $\nu$ is not finitely supported, yet the
corresponding random function $f$ satisfies the recovery property, is
given by random continued fractions with integer entries, see
Example~\ref{ex:cont-frac} below, in which
$$
f(x)=\frac{1}{\xi+x}
$$
with $\xi\in\NN$ a.s.
From the value $y=f(x)$ one can recover the function $f$ by letting
$\xi$ be the integer part of $1/y$ and $x$ the fractional part of
$1/y$. 

The aim of this section is to show that the process $(\zeta(x))_{x>0}$
generated by a random Lipschitz function $f$ which satisfies the recovery
property, is Markov and calculate the corresponding generators. Denote
by $\salg_{[a,b]}$ the $\sigma$-algebra generated by $\zeta(x)$,
$x\in[a,b]$, where $0<a\leq b\leq \infty$. Write $\salg_a$ for
$\salg_{\{a\}}$.  The recovery property implies that $\salg_a$ is
equal to the $\sigma$-algebra generated by the projection of
$\sP_{[0,a]}$ onto the last component $\sG$. 

\begin{theorem}
  \label{thr:Markov}
  Assume that each $z\in{\rm supp}\,Z_{\infty}$ corresponds to a
  unique sequence $(g_{n}^{z})_{n\in\NN}$ from $\sG$ such that
  \begin{equation}
    \label{eq:23}
    z=(g^{z})^{1\uparrow\infty}(z_0),
  \end{equation} 
  and, for all $n\in\NN$, the mapping $z\mapsto g_{n}^{z}$ is
  measurable as a function from $\R$ to $\sG$. Then the process
  $(\zeta(x))_{x>0}$ is Markov both in forward and reverse time, 
    that is, with respect to filtration $(\salg_{(0,x]})_{x>0}$ and
      $(\salg_{[x,\infty)})_{x>0}$, respectively.
\end{theorem}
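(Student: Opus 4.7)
The plan is to exploit the Poisson decomposition together with the recovery hypothesis to realise $\zeta$ on $[x,\infty)$ (respectively on $(0,x]$) as a measurable function of $\zeta(x)$ and an auxiliary source of randomness that is independent of $\salg_{(0,x]}$ (respectively of $\salg_{[x,\infty)}$). Split the driving Poisson process as $\sP=\sP_1\sqcup\sP_2$ with $\sP_1:=\sP\cap(\R_+\times[0,x]\times\sG)$ and $\sP_2:=\sP\cap(\R_+\times(x,\infty)\times\sG)$; by the restriction property these are independent Poisson processes. Enumerate $\sP_1$ in increasing $t$-order as $(t^1_n,x^1_n,f^1_n)_{n\in\NN}$. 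By the marking theorem the three sequences $(t^1_n)_{n\in\NN}$, $(x^1_n)_{n\in\NN}$ and $(f^1_n)_{n\in\NN}$ are mutually independent: the $t^1_n$ are the atoms of a Poisson process of rate $x$ on $\R_+$, the $x^1_n$ are i.i.d.\ uniform on $[0,x]$, and the $f^1_n$ are i.i.d.\ with law $\nu$.

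By the recovery assumption applied to $\zeta(x)$, the sequence $(f^1_n)_{n\in\NN}$ coincides a.s.\ with $(g^{\zeta(x)}_n)_{n\in\NN}$, and is therefore measurable with respect to $\sigma(\zeta(x))$; since conversely $\zeta(x)$ is determined by $(f^1_n)_{n\in\NN}$ through \eqref{eq:23}, we have $\sigma(\zeta(x))=\sigma((f^1_n)_{n\in\NN})$ modulo null sets. The c\`adl\`ag process $(\zeta(z))_{z\in(0,x]}$ from Proposition~\ref{prop:continuity-process} jumps at precisely the points $\{x^1_n:n\in\NN\}$, and applying recovery immediately before and after each jump identifies both the corresponding mark $f^1_n$ and its ordinal position $n$ in the $t$-enumeration. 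Consequently $\salg_{(0,x]}=\sigma((x^1_n,f^1_n)_{n\in\NN})$ up to null sets, which by marking is independent of the time sequence $(t^1_n)_{n\in\NN}$ and of the entire second Poisson process $\sP_2$.

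For the forward Markov property, observe that for every $y\geq x$ the iteration $\zeta(y)$ uses every mark of $\sP_1$ (since $x^1_n\leq x\leq y$), so the $x^1_n$ are irrelevant and $(\zeta(y))_{y\geq x}$ is a measurable function of $(f^1_n)_{n\in\NN}$, $(t^1_n)_{n\in\NN}$ and $\sP_2$ alone. Given $\salg_{(0,x]}$, the marks $(f^1_n)_{n\in\NN}$ are determined and in fact measurable with respect to $\zeta(x)$, whereas the pair $((t^1_n)_{n\in\NN},\sP_2)$ is conditionally distributed as in its unconditional law by the independence established above. Therefore the conditional distribution of $(\zeta(y))_{y\geq x}$ given $\salg_{(0,x]}$ depends on $\salg_{(0,x]}$ only through $\zeta(x)$, which is the Markov property in the filtration $(\salg_{(0,x]})_{x>0}$.

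The reverse direction is dual: $\salg_{[x,\infty)}=\sigma(\zeta(y):y\geq x)$ is measurable with respect to $\sigma((f^1_n)_{n\in\NN},(t^1_n)_{n\in\NN},\sP_2)$, and is therefore independent of $(x^1_n)_{n\in\NN}$. Since $(\zeta(z))_{z\in(0,x]}$ is a measurable function of $(x^1_n,f^1_n)_{n\in\NN}$ and the marks $(f^1_n)_{n\in\NN}$ are recoverable from $\zeta(x)\in\salg_{[x,\infty)}$, conditioning on $\salg_{[x,\infty)}$ freezes the marks according to $\zeta(x)$ while leaving the $x^1_n$ distributed as independent Uniform$[0,x]$ variables whose law does not depend on $\salg_{[x,\infty)}$. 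Hence the conditional law of $(\zeta(z))_{z\in(0,x]}$ given $\salg_{[x,\infty)}$ depends only on $\zeta(x)$. The main obstacle is the rigorous identification $\salg_{(0,x]}=\sigma((x^1_n,f^1_n)_{n\in\NN})$: one has to verify that the position $x^1_n$, and not just the mark $f^1_n$, is measurably read off the path of $(\zeta(z))_{z\in(0,x]}$, which uses the recovery property applied just before and just after each jump together with the a.s.\ distinctness of the jump locations.
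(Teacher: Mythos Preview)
Your argument is correct and follows essentially the same route as the paper: split the Poisson process at level $x$, use the recovery hypothesis to identify the ordered mark sequence $(f^1_n)$ with $(g^{\zeta(x)}_n)$, and show that the remaining randomness needed to build $\zeta$ on the other side of $x$ is independent of the relevant $\sigma$-algebra. The paper's proof makes this explicit via the interleaving indices $\kappa_k$ and the geometric gap structure, whereas you invoke the marking theorem to obtain the mutual independence of $(t^1_n)$, $(x^1_n)$ and $(f^1_n)$ at one stroke, which is arguably a cleaner bookkeeping of the same independence.

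One remark: the ``main obstacle'' you flag at the end, namely the \emph{equality} $\salg_{(0,x]}=\sigma((x^1_n,f^1_n)_{n\in\NN})$, is not actually needed. For the forward direction you only use the inclusion $\salg_{(0,x]}\subseteq\sigma((x^1_n,f^1_n)_{n\in\NN})$, which is immediate since each $\zeta(z)$ with $z\leq x$ is the composition, in increasing $n$, of those $f^1_n$ with $x^1_n\leq z$; together with the fact that $(f^1_n)$ is $\sigma(\zeta(x))$-measurable, this already yields that $((t^1_n),\sP_2)$ is independent of $\salg_{(0,x]}$ and that the conditional law of $(\zeta(y))_{y\geq x}$ factors through $\zeta(x)$. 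Similarly, in the reverse direction you only need $\salg_{[x,\infty)}\subseteq\sigma((f^1_n),(t^1_n),\sP_2)$, not any statement about recovering $(x^1_n)$ from the path. So the path-reading argument for identifying jump locations and their ordinal positions can be dropped without loss.
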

\begin{proof}
  Fix $x,u>0$.  Given $\{\zeta(x)=z\}$, we know the projection of
  $\sP_{[0,x]}$ onto $\sG$, which is the sequence
  $(g_{n}^{z})_{n\in\NN}$. The $\sigma$-algebra $\salg_{(0,x]}$ is
  generated by the family of sequences $(g_{n}^{\zeta(y)})_{n\in\NN}$
  with $y\leq x$. Note that $(g_{n}^{\zeta(y)})_{n\in\NN}$ is a
  subsequence of $(g_{n}^{\zeta(y')})_{n\in\NN}$ if $y\leq y'$.

  Let $\kappa_0:=0$ and put
  \begin{displaymath}
    \kappa_{k+1}:=\min\{i > \kappa_k:\; x_i\leq x\},\quad k\geq 0,
  \end{displaymath}
  where $(t_k,x_k,f_k)_{k\in\NN}$ is the enumeration of atoms of
  $\sP_{[0,x+u]}$ such that $(t_k)_{k\in\NN}$ is a.s.~increasing. We
  have
  \begin{equation}
    \label{eq:21}
    \zeta(x+u)=\lim_{n\to\infty} f^{(1)}\circ f_{\kappa_1}\circ 
    \cdots \circ f^{(n)}\circ f_{\kappa_n}(z_0),
  \end{equation}
  where the limit is in the almost sure sense,
  $f_{\kappa_k}=g_{k}^{\zeta(x)}$, and
  \begin{displaymath}
    f^{(k)}(z):=f^{(\kappa_{k-1}+1)\uparrow (\kappa_k-1)}_{(x,x+u]}(z), \quad
    k\in\NN .
  \end{displaymath}
  Note that $\sP_{(x,x+u]}$ is independent of $\sP_{[0,x]}$ by the
  Poisson property and, in particular, $(\kappa_k-\kappa_{k-1})_{k\in\NN}$ are
  i.i.d. with geometric distribution which are also independent of
  $\salg_{(0,x]}$.  Hence, $\zeta(x+u)$ is determined by $\zeta(x)$
  and $\sP_{(x,x+u]}$, so that the conditional distribution of
  $\zeta(x+u)$ given $\salg_{(0,x]}$ coincides with the conditional
  distribution given $\salg_x$.
  
  Let us now prove the Markov property in the reverse time. Given
  $\{\zeta(x)=z\}$ and $y\leq x$, we have
  \begin{displaymath}
    \zeta(y)=(g^{z}_y)^{1\uparrow\infty}(z_0).
  \end{displaymath}
  where $g_{j,y}^{z}$ is equal in distribution to $g^{z}_j$ with
  probability $y/x$, is the identity function with probability $1-y/x$
  and the choices are mutually independent given
  $\{\zeta(x)=z\}$. That is to say, $\zeta(y)$ is a functional of
  $(g_n^{z})_{n\in\NN}$, and $\sP_{[0,x]}$.  Since $\zeta(x)$
  determines the sequence $(g_n^{z})_{k\in\NN}$, and by the
  independence of Poisson processes, we have the Markov property in
  the reverse time.
\end{proof}

The conditional distribution of $\zeta(x+u)$ given $\{\zeta(x)=z\}$
can be determined as follows. Let $(g_n^{z})_{n\in\NN}$ be a sequence
recovered from $\{\zeta(x)=z\}$. In view of \eqref{eq:21},
$\zeta(x+u)$ for $u>0$ can be derived by inserting between each
consecutive pair of functions in the infinite iteration
$$
z=\zeta(x)=g_1^{z}\circ g_2(z)\circ\cdots\circ g_n^{z}\circ\cdots,
$$
an independent copy of a mapping $f^{(k)}$ composed of a geometric
number of independent copies of $f$. The aforementioned geometric
random variables take values in $\{0,1,2,\ldots\}$, are independent
and all have the same parameter $u/(x+u)$. Similarly, it is possible
to determine the conditional distribution of $\zeta(y)$ given
$\{\zeta(x)=z\}$ with $x\geq y$ by deleting each of the functions
$g_n^z$ independently of others with probability $1-y/x$.

Maintaining assumptions of Theorem~\ref{thr:Markov}, we now aim at
finding the generating operator of the time-homogeneous Markov process
$\tilde\zeta(t):=\zeta(e^t)$, $t\in\R$.  This generating operator in
the forward time is defined as the limit
\begin{displaymath}
  (\mathsf{A}_\uparrow h)(z):=\lim_{\delta\downarrow 0}
  \frac{1}{\delta}\Big[\E(h(\zeta(e^{t+\delta}))|\zeta(e^t)=z)-h(z)\Big],
\end{displaymath}
and in the reverse time as 
\begin{displaymath}
  (\mathsf{A}_\downarrow h)(z):=\lim_{\delta\downarrow 0}
  \frac{1}{\delta}\Big[\E(h(\zeta(e^{t-\delta}))|\zeta(e^t)=z)-h(z)\Big]
\end{displaymath}
for all functions $h$ from their domains of definition. 

We calculate the above generators under additional assumptions:
\begin{equation}\label{eq:generator_assumptions}
  L_f\leq c_f\text{ for some deterministic constant } c_f<1
  \quad\text{and}\quad Z_{\infty}\text{ is compactly supported}.
\end{equation}
The above assumption holds, for example, for Bernoulli convolutions. 

\begin{proposition}
  \label{prop:generators}
  Assume that $f$ possesses the recovery property and
  \eqref{eq:generator_assumptions} holds. Then, for $h\in C^1({\rm
    supp}\,Z_{\infty})$, it holds
  \begin{equation}
    \label{eq:24}
    (\mathsf{A}_\uparrow h)(z)=
    \sum_{k=0}^\infty 
    \Big[\E h\big((g^{z})^{1\uparrow k}\circ f 
    \circ ((g^{z})^{1\uparrow k})^{-1}(z)\big)
    - h(z)\Big],\quad z\in{\rm supp}\,Z_{\infty},
  \end{equation}
  and also
  \begin{equation}
    \label{eq:25}
    (\mathsf{A}_\downarrow h)(z)=
    \sum_{k=0}^\infty\Big[h\big((g^{z})^{1\uparrow(k-1)}\circ
    (g_{k}^z)^{-1} 
    \circ ((g^{z})^{1\uparrow(k-1)})^{-1}(z)\big) - h(z)\Big],
    \quad z\in{\rm supp}\,Z_{\infty},
  \end{equation}
  where $(g_j^z)_{j\in\NN}$ is a sequence of deterministic functions
  which is uniquely determined by $z\in{\rm supp}\,Z_{\infty}$.
\end{proposition}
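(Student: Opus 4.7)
The plan is to derive both generators by conditioning on the position of the first modification (insertion for forward, deletion for reverse) of the deterministic sequence $(g_k^z)_{k\in\NN}$ associated to $\{\zeta(x)=z\}$ via the recovery property. By the scale invariance (Theorem~\ref{thr:s-sim}) we may take $t=0$, so $x=1$ and $x\pm u=e^{\pm\delta}$; set $p:=1-e^{-\delta}$, and note $p/\delta\to1$ as $\delta\downarrow0$.

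For the forward direction, the description given just after Theorem~\ref{thr:Markov} gives, conditionally on $\{\zeta(1)=z\}$,
\[
\zeta(e^\delta)\dsim\lim_n f^{(1)}\circ g_1^z\circ f^{(2)}\circ g_2^z\circ\cdots\circ f^{(n)}\circ g_n^z(z_0),
\]
where $f^{(k)}=f_{k,1}\circ\cdots\circ f_{k,N_k}$ (empty composition $=\Id$) with $f_{k,j}$ i.i.d.\ copies of $f$ and $N_k$ independent with $\P\{N_k=n\}=(1-p)p^n$. I partition by the index of the first nonempty slot: $E_k:=\{N_1=\cdots=N_{k-1}=0,\ N_k\ge1\}$, $\P(E_k)=(1-p)^{k-1}p$, further split into $\{N_k=1\}$ (conditional probability $1-p$) and $\{N_k\ge2\}$ (conditional probability $p$). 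On $E_k\cap\{N_k=1\}$ the unique inserted function is an independent copy $\tilde f\sim\nu$ and
\[
\zeta(e^\delta)=(g^z)^{1\uparrow(k-1)}\bigl(\tilde f(Y_k)\bigr),\qquad Y_k:=g_k^z\circ f^{(k+1)}\circ g_{k+1}^z\circ\cdots(z_0).
\]

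The central technical step is a uniform-in-$k$ tail estimate: under~\eqref{eq:generator_assumptions} every $g_j^z$ and every $f_{j,i}$ is a.s.\ $c_f$-Lipschitz and $\mathrm{supp}\,Z_\infty$ has some bounded diameter $D$, so that the effect of an insertion at slot $k+m$ is contracted by $c_f^m$, giving
\[
\E|Y_k-z_k|\le D\sum_{m\ge 1}c_f^m\,\P\{N_{k+m}\ge1\}=\frac{Dpc_f}{1-c_f},
\]
with $z_k:=((g^z)^{1\uparrow(k-1)})^{-1}(z)$. Combined with $h\in C^1(\mathrm{supp}\,Z_\infty)$, the contribution of $E_k\cap\{N_k=1\}$ to $\E[h(\zeta(e^\delta))\mid\zeta(1)=z]$ equals $(1-p)^k p\,(h(z)+H_k(z))+O(c_f^k p^2)$ uniformly in $k$, with
\[
H_k(z):=\E h\bigl((g^z)^{1\uparrow(k-1)}\circ f\circ((g^z)^{1\uparrow(k-1)})^{-1}(z)\bigr)-h(z),
\]
while the complementary events $E_k\cap\{N_k\ge2\}$ contribute at most $(1-p)^{k-1}p^2\,L_h c_f^{k-1}D$ each by the crude Lipschitz bound $|h(\zeta(e^\delta))-h(z)|\le L_h c_f^{k-1}D$, summing to $O(p^2)$. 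Therefore
\[
\E[h(\zeta(e^\delta))\mid\zeta(1)=z]-h(z)=p(1-p)\sum_{k=1}^\infty(1-p)^{k-1}H_k(z)+O(p^2),
\]
and dividing by $\delta$, then applying dominated convergence with summable majorant $|H_k|\le L_h c_f^{k-1}\E|f(z_k)-z_k|$, gives \eqref{eq:24} after re-indexing $k-1\mapsto k$.

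The reverse generator comes from the same scheme applied to the deletion description: each $g_k^z$ is independently kept with probability $e^{-\delta}$ and deleted with probability $p$. Conditioning on the first deleted index $M=k$ and, with conditional probability $1-O(p)$ (again by the contraction $c_f^{m-1}$ that bounds the effect of a subsequent deletion at $g_{k+m}^z$), no further deletions survive, the post-deletion composition equals
$(g^z)^{1\uparrow(k-1)}\circ(g_k^z)^{-1}\circ((g^z)^{1\uparrow(k-1)})^{-1}(z)$; the identical bookkeeping produces \eqref{eq:25}. The main obstacle is the uniform-in-$k$ control of the infinitely many potential higher-order modifications so that only the first modification contributes at order $\delta$; this is precisely where the pathwise contraction $L_f\le c_f<1$ and the boundedness of $\mathrm{supp}\,Z_\infty$ in \eqref{eq:generator_assumptions} are indispensable, as they upgrade the stochastic condition $\E\log L_f<0$ of \eqref{eq:cond1} to a deterministic per-slot Lipschitz factor $c_f$.
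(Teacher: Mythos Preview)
Your argument is correct and follows essentially the same route as the paper's proof. Both proofs condition on the location of the first inserted function and show, via the deterministic contraction $L_f\le c_f<1$ and the boundedness of $\mathrm{supp}\,Z_\infty$, that all subsequent insertions contribute $O(\delta^2)$; then a geometric sum plus dominated convergence (with summable majorant $\mathrm{const}\cdot c_f^k$) yields the series. The only cosmetic difference is that the paper parametrises by the stopping time $\tau_1$ (index of the first point in the enlarged strip) and bounds the effect of all later insertions in one stroke by $\mathrm{const}\cdot c_f^{\tau_2}$, using $\delta^{-1}\E c_f^{\tau_2}=\delta^{-1}(\E c_f^{\tau_1})^2\to 0$, whereas you split the first nonempty slot further into $\{N_k=1\}$ and $\{N_k\ge 2\}$ and handle subsequent slots through the telescoping bound on $|Y_k-z_k|$; both lead to the same $O(p^2)$ remainder and the same limiting sum.
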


The proof of Proposition \ref{prop:generators} if given in the Appendix.

We close this section by noticing that the Markov property holds
without the recovery property but with respect to a larger filtration
generated by the Poisson process in horizontal strips.

\section{Perpetuities}
\label{sec:perpetuities}

\subsection{Moments and covariances}
\label{sec:moments-covariance}

Let $f(z)=M z+ Q$, where $(M,Q)$ is a random vector in $\R^2$, so that
$L_f=M$. The iterations of i.i.d. copies of the affine random mapping
$z\mapsto M z + Q$ are known as \emph{perpetuities}. In order to avoid
trivialities, we assume throughout this section that
\begin{equation}
  \label{eq:perp_non_degenerate}
  \P\{Mx+Q=x\}<1\quad\text{for all }x\in\R.
\end{equation}
Assume that $\E|M|<1$ and $Q$ is integrable. Then $\zeta(A)$ is
integrable and
\begin{displaymath}
  \E\zeta(A)=\frac{\E Q}{1-\E M}.
\end{displaymath}
If $\E M^2<1$ and $\E Q^2<\infty$, then $\zeta(A)$ is square
integrable for all $A\in\Borel_+(\sX)$, see
\cite[Th.~1.4]{Alsmeyer+Iksanov+Roesler:2008} and
Proposition~\ref{prop:moments}, and the general expression for the
covariance can be found from \eqref{eq:3}. Assuming additionally $\E
Q=0$ and independence of $M$ and $Q$, we obtain
\begin{displaymath}
  \E(\zeta(A_1)\zeta(A_2))
  =\frac{\E Q^2 \mu(A_1\cap A_2)}
  {(1-\E M)\mu(A_2\cup A_1)+(\E M-\E M^2)\mu(A_1\cap A_2)}.
\end{displaymath}
From this we deduce
\begin{displaymath}
  \E\zeta(A)^2 = \frac{\E Q^2}{1-\E M^2},
\end{displaymath}
and 
\begin{displaymath}
  \E(\zeta(A_1)-\zeta(A_2))^2
  =\frac{2\E Q^2(1-\E M)\mu(A_1\triangle A_2)}
  {(1-\E M^2)((\E M-\E M^2)\mu(A_1\cap A_2)
    +(1-\E M)\mu(A_1\cup A_2))}.
\end{displaymath}
Thus, $\zeta$ is continuous in $L^2$ with respect to the convergence
of its argument in measure.  

From now on assume that $\sX=\R_+$, and $\zeta(x)=\zeta([0,x])$. Then
\begin{displaymath}
  \E(\zeta(x)\zeta(y))=\frac{x\E Q^2}
  {(1-\E M)y+(\E M -\E M^2)x},\quad x\leq y.
\end{displaymath}
By exponential change of time, we obtain the stationary process
$\tilde\zeta(s)=\zeta(e^s)$, $s\in\R$, with covariance
\begin{displaymath}
  \E(\tilde\zeta(0)\tilde\zeta(s))=\frac{a}{ce^{|s|}+1},
\end{displaymath}
where $a>0$ and $c>1$. Note that the covariance is not differentiable
at zero, so the process is not $L^2$-differentiable. 

If $M$ and $Q$ are independent, but $Q$ is not centred, then 
\begin{equation}
  \label{eq:16}
  \E(\zeta(x)\zeta(y))=\frac{1}{1-\E M}\cdot \frac{\E Q^2 (1-\E M)+(2\E M-1) (\E Q)^2+(\E Q)^2 (y/x)}
  {(1-\E M)(y/x)+\E M-\E M^2}.
\end{equation}
The covariance between $\zeta(x)$ and $\zeta(y)$ tends to $(\E
Q)^2/(1-\E M)^2$ as $y/x\to\infty$, which, in particular means that {\it correlation} between $\zeta(x)$ and $\zeta(y)$ tends to $0$, as $y/x\to\infty$.

\subsection{The case of a finite interval}
\label{sec:case-finite-interval}

Now consider iterations of $f(z)=Mz+Q$ on the finite interval $(0,1]$
as described in Section~\ref{sec:iter-finite-interv}. Equation
\eqref{eq:2} can be written as
\begin{equation}
  \label{eq:5}
  (\zeta(x))_{x\in(0,1]}\fod ((M\one_{\{U\leq x\}}+\one_{\{U>x\}})\zeta(x) +
  Q\one_{\{U\leq x\}})_{x\in(0,1]}.
\end{equation}
The process $\zeta(x)$ can be also expressed as the a.s.~(pointwise)
convergent functional series
\begin{equation}
  \label{eq:perpetuity-in-strip}
  \zeta(x)=\sum_{n=1}^\infty M'_{1,x}\cdots M'_{n-1,x} Q'_{n,x},\quad x\in(0,1],
\end{equation}
where 
\begin{displaymath}
  M'_{n,x}:=M_n\one_{\{U_n\leq x\}}+\one_{\{U_n>x\}}
  =M_n^{\one_{\{U_n\leq x\}}},\quad
  Q'_{n,x}:=Q_n\one_{\{U_n\leq x\}},\quad n\geq 1,\quad x\in(0,1].
\end{displaymath}

%

If $M=\lambda\in(0,1)$ is fixed,  then 
\begin{equation}\label{eq:perpetuity_with_constant_factor_repres}
  \zeta(x)= \sum_{n=1}^\infty \lambda^{\one_{\{U_1\leq
      x\}}+\cdots+\one_{\{U_{n-1}\leq x\}}} Q_n \one_{\{U_n\leq x\}}
  =:\sum_{n=1}^{\infty}\lambda^{T_{n-1}(x)}Q_n\one_{\{U_n\leq x\}},
\end{equation}
where $T_n(x):=n\widehat{F}_n(x)$, with $\widehat{F}_n(x)$ being the
standard empirical distribution function for the sample
$\{U_1,\dots,U_n\}$. Note that if $Q_1$ is Gaussian, then the sum also
has a Gaussian distribution, that is, the univariate distributions of
$\zeta(x)$ are Gaussian.  Recall that the distribution of $\zeta(x)$
does not depend on $x>0$.

Let us derive an alternative representation for $\zeta(x)$. Put 
$$
S_n(x):=\inf\left\{k\in\NN:\sum_{j=1}^{k}\one_{\{U_j\leq
    x\}}=n\right\}
=\inf\{k\in\NN: T_{k}(x)=n\},\quad n\in\NN,
$$
and note that $\{T_{k-1}(x)=j,U_k\leq x\}=\{S_{j+1}(x)=k\}$ for all
$j\geq 0$ and $k\in\NN$. Thus,
\begin{align*}
\zeta(x)&=\sum_{n=1}^{\infty}\lambda^{T_{n-1}(x)}Q_n\one_{\{U_n\leq x\}}=\sum_{n=1}^{\infty}\sum_{j=0}^{\infty}\lambda^j \one_{\{T_{n-1}(x)=j\}}Q_n\one_{\{U_n\leq x\}}\\
&= \sum_{j=0}^{\infty}\lambda^j \sum_{n=1}^{\infty}Q_n\one\{S_{j+1}(x)=n\}= \sum_{j=0}^{\infty}\lambda^j Q_{S_{j+1}(x)}.
\end{align*}
Summarising we derive the following representation
\begin{equation}\label{eq:representation_neg_binomial}
\zeta(x)=\sum_{j=1}^{\infty}\lambda^{j-1} Q_{S_{j}(x)},\quad x\in(0,1].
\end{equation}
Note that $S_n(x)$ is distributed as a sum of $n$ independent
geometric random variables on $\{1,2,\ldots\}$ with success
probability $x$.

\subsection{Bernoulli convolutions}
\label{sec:bern-conv}

If $M=\lambda\in(0,1)$ and $Q$ takes values $0$ and $1$ with equal
probabilities, then $\zeta(x) =\zeta([0,x])$ is the Bernoulli
convolution for each $x>0$, see
\cite{Erdoes:1939,Erdoes:1940,Solomyak:1995}\footnote{It is
    often alternatively assumed that $Q$ takes values $1$ and
    $-1$.}. By \eqref{eq:16}, 
\begin{equation}
  \label{eq:4}
  \E(\zeta(x)\zeta(y))=\frac{x+y}{4(1-\lambda)^2(y+\lambda x)}, \quad x\leq y. 
\end{equation}

If $\lambda<1/2$, then the distribution of $\zeta(x)$ and the
finite-dimensional distributions of the process $\zeta$ are singular.
If $\lambda=1/2$, then $\zeta(x)$ has the uniform distribution on
$[0,2]$ for all $x$. Let $\mu_{BC,1/2}^{(x)}$ denote the joint
distribution of $(\zeta(x),\zeta(1))$ for $\lambda=1/2$. A sample from
the distribution $\mu_{BC,1/2}^{(0.8)}$ is shown on
Figure~\ref{fig:bernoulli-convolutions}, suggesting that the
$\mu_{BC,1/2}^{(x)}$ is singular for $x\in(0,1)$.

\begin{figure}[!t]
  \label{fig:bernoulli-convolutions}
  \begin{center}
    \includegraphics[scale=0.25]{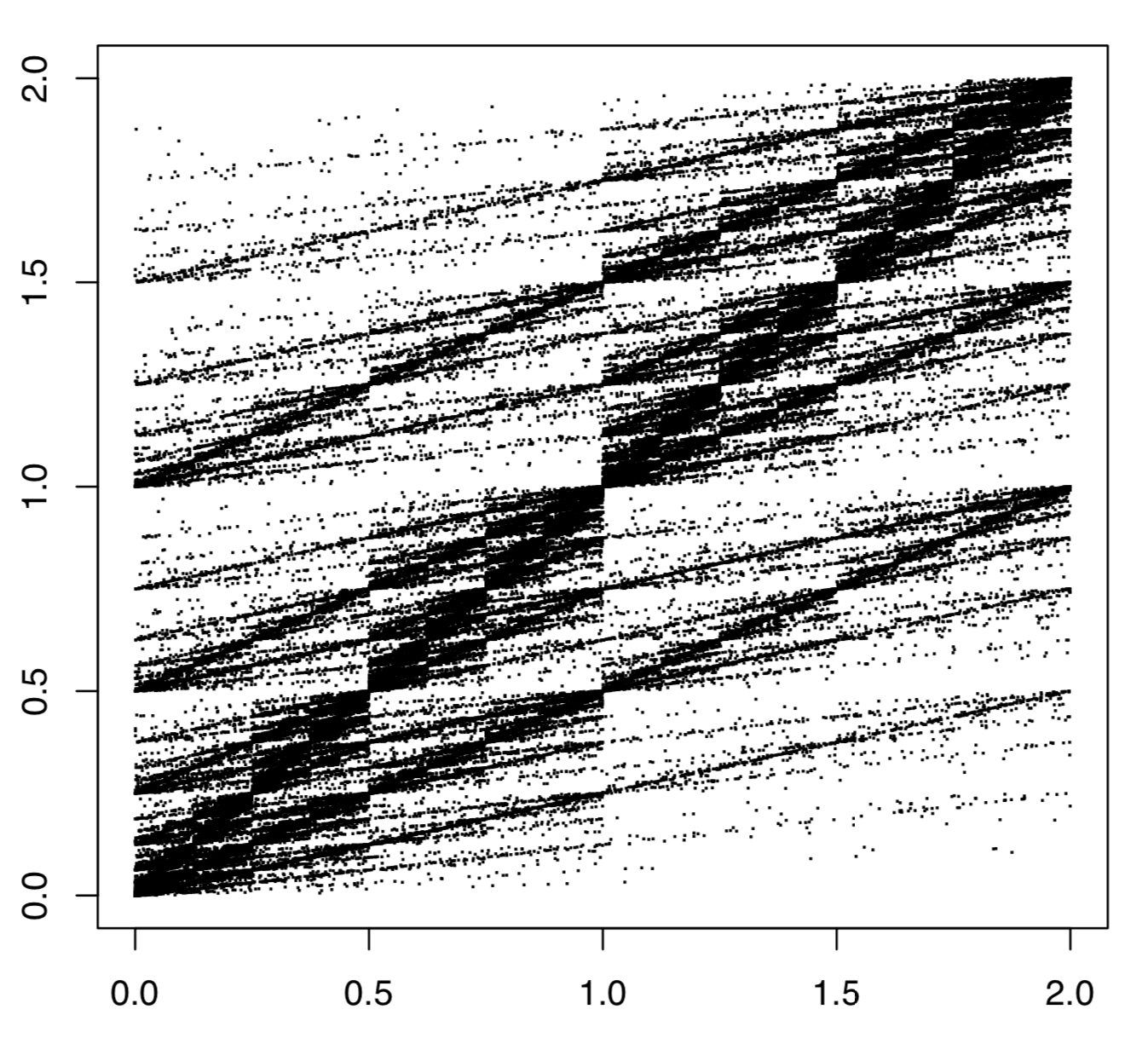}
    \caption{A simulated sample of values for
      $(\zeta(0.8),\zeta(1))$ for Bernoulli convolutions.}
  \end{center}
\end{figure}

The probability measure $\mu_{BC,1/2}^{(x)}$ is the invariant measure for the affine
iterated function system on $\R^2$ generated by $g_i(z):=\bMh_iz+\bQh_i$,
$i=1,\dots,4$, where
\begin{displaymath}
  \bMh_1=\bMh_2:=
  \begin{pmatrix}
    1/2 & 0\\
    0 & 1/2 
  \end{pmatrix},\quad 
  \bMh_3=\bMh_4:=
  \begin{pmatrix}
    1 & 0\\
    0 & 1/2 
  \end{pmatrix},
\end{displaymath}
and 
\begin{displaymath}
  \bQh_1:=(1,1), \; \bQh_2:=(0,0), \; \bQh_3:=(0,1), \bQh_4:=(0,0).
\end{displaymath}
The corresponding probabilities 
\begin{equation}
  \label{eq:29}
  p_1=p_2:=\frac{x}{2},\quad p_3=p_4:=\frac{1-x}{2}
\end{equation}
determine a measure on $\{1,\dots,4\}$ and then the product measure
$m$ on $\{1,\dots,4\}^{\mathbb{Z}}$. Then $\mu_{BC,1/2}^{(x)}$ is the
image of $m$ under the map 
\begin{displaymath}
  \{1,\dots,4\}^{\mathbb{Z}_+}\ni (i_0,i_1,\ldots)\mapsto
  \lim_{n\to\infty}(\bQh_{i_0}+\bMh_{i_0}\bQh_{i_1}
  +\cdots+\bMh_{i_0}\cdots \bMh_{i_{n-1}}\bQh_{i_n}).
\end{displaymath}
The above system of affine maps $g_1,g_2,g_3,g_4$ exhibits exact
overlaps, for example, $g_1\circ g_3\circ g_3\circ g_1=g_3\circ
g_1\circ g_1\circ g_3$.

The top Lyapunov exponent is
\begin{displaymath}
  \lambda_1(x):=\lim_{n\to\infty}\frac{1}{n}
  \log\|\bMh_{i_1}\cdots \bMh_{i_n}\|=-x\log 2,
\end{displaymath}
where the limit holds for $m$-almost all sequences $(i_1,i_2,\ldots)$
by the strong law of large numbers.  Since the top Lyapunov exponent
is negative, the iterated function system is contracting on
average. Noticing that $m$ is ergodic, Theorem 1.2 in \cite{fen19}
applies and yields that $\mu_{BC,1/2}^{(x)}$ is exact dimensional. By definition, 
this means that the limit
\begin{equation}
  \label{eq:17}
  \mathrm{dim}_{\mathrm{loc}}(\mu_{BC,1/2}^{(x)},z)
  :=\lim_{r\downarrow 0} \frac{\log \mu_{BC,1/2}^{(x)}(B_r(z))}{\log r},
\end{equation}
which defines the local dimension of $\mu_{BC,1/2}^{(x)}$ at point $z$, exists and takes the same value for $\mu_{BC,1/2}^{(x)}$-almost all $z$. Moreover, this common value coincides with the Hausdorff dimension $\mathrm{dim}_\mathrm{H}\mu_{BC,1/2}^{(x)}$. Here $B_r(z)$ is the Euclidean ball of radius
$r$ centred at $z$.  The dimension formula of Feng
\cite[Th.~1.3]{fen19} applies in this case and yields that
\begin{equation}
  \label{eq:27}
  \mathrm{dim}_\mathrm{H}\mu_{BC,1/2}^{(x)}=
  \frac{h_1-h_0}{\lambda_1(x)}+\frac{h_2-h_1}{\lambda_2},
\end{equation}
where $\lambda_2=-\log 2$ is the second Lyapunov exponent, and
$h_0,h_1,h_2$ are (conditional) entropies of the system. First, 
\begin{displaymath}
  h_0=-x\log x -(1-x)\log(1-x) +\log 2:= I(x)+\log 2
\end{displaymath}
is the unconditional entropy of the distribution \eqref{eq:29}. While
the exact calculation of $h_2$ constitutes a hard
combinatorial problem, $h_1$ can be determined by noticing that the
first summand in \eqref{eq:27} is equal to the dimension of the
invariant measure for the iterative system on the line composed of the
functions $x/2+1,x/2,x,x$ with probabilities
\eqref{eq:29}\footnote{The authors are grateful to D.-J.~Feng for this
  argument.}. This invariant measure is the uniform distribution on
$[0,2]$, hence, $h_1=h_0-x\log 2$. Finally, since $h_2\geq 0$, we
obtain
\begin{equation}
  \label{eq:28}
  1\leq \mathrm{dim}_\mathrm{H}\mu_{BC,1/2}^{(x)}
  \leq \min(2,2-x+I(x)/\log 2).
\end{equation}
The upper bound alternatively arises from the calculation of the
Lyapunov dimension of the iterative function system, see
\cite{jor:pol:sim07} and \cite{hoc:rap19}.  Note that the right-hand
side of \eqref{eq:28} is smaller than $2$ if and only if $x\in
(x^{\ast},1)$ where $x^{\ast}\approx 0.772908$ is the unique positive
root of the equation $I(x)=x\log 2$. Thus, $\mu_{BC,1/2}^{(x)}$ is
singular for $x>x^{\ast}$. In particular, if $x=0.8$, then the upper
bound equals $\approx1.92$, confirming singularity of the distribution
corresponding to Figure~\ref{fig:bernoulli-convolutions}. We
conjecture that $\mu_{BC,1/2}^{(x)}$ is singular for all $x\in(0,1)$.

\begin{theorem}
  \label{thr:BC-dim}
  The local dimension of the distribution $\mu_{BC,1/2}^{(x)}$ of
  $(\zeta(x),\zeta(1))$ in the Bernoulli convolution scheme with
  $\lambda=1/2$ equals
  \begin{equation}
    \label{eq:26}
    \mathrm{dim}_{\mathrm{loc}}(\mu_{BC,1/2}^{(x)},z)= 2-\frac{\log(1+x)}{\log 2}
  \end{equation}
  for arbitrary $z:=(z_1,z_2)\in [0,2]^2$ with finite binary
  expansions such that the expansion of $z_1$ is a substring of the
  expansion of $z_2$.
\end{theorem}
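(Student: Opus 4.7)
The plan is to determine the leading exponential rate of $\mu_{BC,1/2}^{(x)}(B_{2^{-n}}(z))$ as $n\to\infty$, using the explicit representation \eqref{eq:representation_neg_binomial} of $\zeta(x)$ and $\zeta(1)$ in terms of the i.i.d.\ bits $(Q_k)$ and the selections $(\one_{\{U_k\leq x\}})$. Writing $z_2=\sum_{k=1}^{N}2^{-(k-1)}b_k^*$ and $z_1=\sum_{j=1}^{M}2^{-(j-1)}c_j^*$, the substring hypothesis produces indices $1\leq k_1<\cdots<k_M\leq N$ with $b_{k_j}^*=c_j^*$ for all $j$. Since the Euclidean and sup-norm metrics on $\R^2$ are equivalent and the dyadic ambiguity of $z_1,z_2$ generates only boundedly many approach bit-sequences, $\mu(B_{2^{-n}}(z))$ is comparable, up to a multiplicative constant, to $\P\{E_n\}$, where
\begin{equation*}
  E_n:=\{Q_k=b_k^*,\;k\leq n\}\cap\{Q_{S_j(x)}=c_j^*,\;j\leq n\},
\end{equation*}
with the convention $b_k^*:=0$ for $k>N$ and $c_j^*:=0$ for $j>M$.

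The bit factor is trivial: $\P\{Q_k=b_k^*,\,k\leq n\}=2^{-n}$. For the selection factor I would partition according to the positions of the first $M$ selected indices $S_1(x),\ldots,S_M(x)$. The dominant contribution comes from configurations where these lie in $[1,N]$ and realise one of the substring placements $(k_1,\ldots,k_M)$; for each such placement $\P\{S_j=k_j,\,j\leq M\}=x^M(1-x)^{k_M-M}$, which is $n$-independent. Given this placement, the remaining $n-M$ selections lie in $(k_M,\infty)$ and must produce bit $0$: each selection landing in $(k_M,n]\setminus A$ does so automatically because $Q_l=0$ there (where $A\subseteq(k_M,N]$ is the fixed finite set of positions at which $Q_l=b_l^*=1$; avoiding $A$ costs a constant factor $(1-x)^{|A|}$), while each selection landing in $(n,\infty)$ requires matching a fair random bit and contributes a factor $1/2$. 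Writing $R\sim\mathrm{Bin}(n-k_M-|A|,x)$ for the number of useful selections in $(k_M,n]$, a direct computation yields
\begin{equation*}
  \E\bigl[2^{-(n-M-R)}\bigr]=2^{-(n-M)}(1+x)^{n-k_M-|A|}.
\end{equation*}
Multiplying all factors, one obtains $\P\{E_n\}\asymp((1+x)/4)^n$, hence $\dim_{\mathrm{loc}}(\mu_{BC,1/2}^{(x)},z)=-\log_2((1+x)/4)=2-\log_2(1+x)$.

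The main obstacle is the matching upper bound: one must show that every \emph{delayed} configuration---one in which some $S_j(x)$ with $j\leq M$ falls outside $[1,N]$---contributes with a strictly smaller exponential rate. Each delayed selection in $(N,n]$ forces the associated $c_j^*$ to vanish and introduces extra $(1-x)$ factors from the additional waiting past $N$, while each delayed selection in $(n,\infty)$ introduces an extra $1/2$ from matching a uniform bit. Summing the resulting geometric series over all placements and configurations yields a total bounded by $((1-x)/4)^n$ (or smaller), which is strictly below $((1+x)/4)^n$ for every $x\in(0,1]$. Handling in parallel the two binary expansions of each dyadic $z_1,z_2$ in the sup-norm approximation $\mu(B_{2^{-n}}(z))\asymp\P\{E_n\}$ contributes only finitely many further terms of the same exponential rate, which completes the argument.
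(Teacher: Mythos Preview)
Your approach is essentially the same as the paper's: both reduce the local dimension to the exponential rate of $\P\{E_n\}$ via the bit representation \eqref{eq:representation_neg_binomial}, isolate the dominant contribution by a binomial moment computation giving $((1+x)/4)^n$, and show the remaining configurations are exponentially negligible. The paper organises the decomposition slightly differently---it conditions on the pair $(T_n,S_{T_n})$ and invokes the memoryless property of the geometric increments, whereas you condition on the full placement $(S_1,\ldots,S_M)$---but the underlying mechanism (counting how many of the required selections land in $[1,n]$ versus beyond, and paying a factor $1/2$ for each of the latter) is identical.

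Two places where your sketch is looser than the paper's argument, though neither is fatal. First, your asserted bound $((1-x)/4)^n$ for the delayed configurations is stated without proof; the paper only derives the cruder $((1-x)^{1/m}/4)^k$ via $\P\{S_m>k+1\}\leq m(1-x)^{(k+1)/m}$, which is already enough since any base strictly below $1+x$ suffices. Second, your claim that the alternate dyadic expansions of $z_1,z_2$ ``contribute finitely many further terms of the same exponential rate'' is not what actually happens: the paper treats the four quadrants $z+[\pm 2^{-k},0]\times[\pm 2^{-k},0]$ separately and shows that the two \emph{off-diagonal} quadrants have strictly smaller rate (bounded by a constant times $((1-x)/4)^k$), while only the two diagonal quadrants carry the main term. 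This distinction matters for the lower bound on $\mu(B_{2^{-k}}(z))$, since if an off-diagonal expansion pair failed the substring condition your argument would not directly yield a contribution at the correct rate; fortunately these pairs are simply negligible. With these two points tightened, your write-up would match the paper's proof.
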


The proof is postponed to Appendix.

Since the binary rational points in $[0,2]^2$ have
$\mu_{BC,1/2}^{(x)}$-measure zero, Theorem~\ref{thr:BC-dim} does not
allow us to conclude that the dimension of $\mu$ is given by
\eqref{eq:26}. We leave the stronger variant of this statement as a
conjecture. Note that \eqref{eq:26} complies with the bounds given in
\eqref{eq:28}.

\begin{conj}
We conjecture that $\mathrm{dim}_{\mathrm{H}} \mu_{BC,1/2}^{(x)}=2-\frac{\log(1+x)}{\log 2}$.
\end{conj}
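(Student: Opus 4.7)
My strategy is to apply Feng's dimension formula~\eqref{eq:27}. With the identifications $h_0=I(x)+\log 2$, $h_1=h_0-x\log 2$, $\lambda_1(x)=-x\log 2$, $\lambda_2=-\log 2$, and $(h_1-h_0)/\lambda_1(x)=1$ recorded in the excerpt, the conjecture is equivalent to the single identity
\[
h_1-h_2\ =\ \log\frac{2}{1+x},\qquad\text{i.e.,}\qquad h_2=I(x)-x\log 2+\log(1+x),
\]
reducing the entire problem to the determination of $h_2$.

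I would begin by analysing the exact-overlap structure of the iterated function system $\{g_1,g_2,g_3,g_4\}$. Because the four linear parts $\bMh_i$ are diagonal and therefore commute pairwise, a composition $g_{i_1}\circ\cdots\circ g_{i_n}$ is determined by the letter counts $(n_1,n_2,n_3,n_4)$ together with two translation coordinates. A direct computation reveals that the first coordinate is a Bernoulli convolution of parameter $1/2$ applied to the $\{1,2\}$-subsequence of $(i_1,\ldots,i_n)$ (with bit $\one\{i_{k_l}=1\}$ in the $l$-th position), and the second coordinate is an analogous Bernoulli convolution over the full sequence. Hence, beyond the commutation overlaps $g_1\circ g_3=g_3\circ g_1$ and $g_2\circ g_4=g_4\circ g_2$ noted in the excerpt, uniform-measure coincidences in these translation sums at $\lambda=1/2$ are expected to collapse further equivalence classes in the composition semigroup.

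I would then combine these two sources of overlap to compute $h_2$. A direct Cartier--Foata count on the trace monoid with commutation graph $\{1\sim 3,\,2\sim 4\}$ yields a trace entropy $h_2^{\mathrm{trace}}=I(x)$ and only the upper bound $\mathrm{dim}_{\mathrm{H}}\mu_{BC,1/2}^{(x)}\le 2-x$; this is strictly larger than the conjectured value, since $\log(1+x)>x\log 2$ on $(0,1)$, and the additional $\log((1+x)/2^x)$ needed to reach the target $h_2$ must come from dyadic-rational coincidences in the translation coordinates. I would quantify these coincidences via thermodynamic formalism for self-affine measures (cf.\ \cite{hoc:rap19} and references therein), and secure the matching lower bound through an exponential-separation construction of a non-overlapping subfamily of compositions at the correct growth rate.

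The principal obstacle is the rigorous handling of the dyadic-rational coincidences in the translation coordinates at $\lambda=1/2$. These are uniform-measure phenomena reminiscent of the notoriously delicate Erd\H os--Solomyak--Hochman analysis of Bernoulli convolutions, and the sharp separation estimates required in the self-affine setting appear beyond current techniques for this particular system. The pointwise formula of Theorem~\ref{thr:BC-dim} at substring-rational $z$ provides the target value and serves as a sanity check, but the substring-rational set has $\mu_{BC,1/2}^{(x)}$-measure zero and the pointwise calculation does not transfer to $\mu$-typical $z$. Situated at the intersection of self-affine dimension theory and Bernoulli-convolution overlap phenomena, the conjecture appears to require genuinely new techniques, which is why it is left open in the paper.
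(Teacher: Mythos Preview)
The statement is a \emph{conjecture}, and the paper offers no proof of it; it only brackets the conjectured value between the lower bound of Theorem~\ref{thr:BC-dim-bis} and the Lyapunov upper bound~\eqref{eq:28}, and supports it heuristically via the local-dimension computation of Theorem~\ref{thr:BC-dim} at substring-rational points. There is therefore nothing in the paper to compare your attempt against.

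Your write-up is not a proof either, and to your credit you say so explicitly in the final paragraph. The reduction you give is accurate: plugging the known values of $h_0$, $h_1$, $\lambda_1(x)$, $\lambda_2$ into Feng's formula~\eqref{eq:27} does reduce the conjecture to the single identity $h_2=I(x)-x\log 2+\log(1+x)$. Your observation that $g_1\circ g_3=g_3\circ g_1$ and $g_2\circ g_4=g_4\circ g_2$ is correct (and in fact stronger than the four-fold overlap the paper exhibits), and your remark that the trace-monoid count alone only recovers the weaker bound $\mathrm{dim}_{\mathrm{H}}\mu_{BC,1/2}^{(x)}\le 2-x$ is a useful diagnostic showing that the ``missing'' entropy must come from dyadic coincidences in the translation parts. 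But the step you flag as the obstacle---quantifying those dyadic coincidences to pin down $h_2$ exactly---is precisely the open problem; invoking thermodynamic formalism and exponential separation is a wish list, not an argument. In short: your reduction is sound, your assessment of where the difficulty lies is correct, and the conjecture remains open for both you and the authors.
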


The following result shows that the right-hand side of \eqref{eq:26}
provides a lower bound on the dimension of $\mu_{BC,1/2}^{(x)}$.

\begin{theorem}
  \label{thr:BC-dim-bis}
  The dimension of the distribution $\mu_{BC,1/2}^{(x)}$ of
  $(\zeta(x),\zeta(1))$ in the Bernoulli convolution scheme with
  $\lambda=1/2$ satisfies
  \begin{equation}
    \label{eq:26-1}
    \mathrm{dim}_{\mathrm{H}} \mu_{BC,1/2}^{(x)}\geq 2-\frac{\log(1+x)}{\log 2}.
  \end{equation}
\end{theorem}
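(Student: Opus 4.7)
The plan is to obtain the lower bound from the estimate
\[
\mu_{BC,1/2}^{(x)}(B_r(z)) \lesssim r^{2-\log(1+x)/\log 2}
\]
valid for all $z\in[0,2]^2$ and all sufficiently small $r>0$, from which the mass distribution principle immediately yields $\mathrm{dim}_{\mathrm{H}}\mu_{BC,1/2}^{(x)}\geq 2-\log(1+x)/\log 2$. Throughout, I will use the coupled representation
\[
\zeta(x)=\sum_{j\geq 1}2^{-(j-1)}Q_{S_j(x)},\qquad \zeta(1)=\sum_{n\geq 1}2^{-(n-1)}Q_n
\]
from \eqref{eq:representation_neg_binomial}, where the $\mathrm{Bernoulli}(1/2)$ digits $(Q_n)$ and the independent thinning indicators $V_n:=\one_{\{U_n\leq x\}}\sim\mathrm{Bernoulli}(x)$ generate $\zeta(1)$ and, through the arrival times $S_j(x)$ of $(V_n)$, the thinned expansion $\zeta(x)$.

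The argument proceeds on the dyadic scale $r=2^{-N}$. Let $q_1,q_2,\ldots$ denote the binary digits of $z_2/2$. The first key step is to show that, modulo a set of $\mu_{BC,1/2}^{(x)}$-measure zero, the event $\{|\zeta(1)-z_2|\leq r\}$ reduces to $\{Q_i=q_i:\ 1\leq i\leq N\}$, an event of probability $2^{-N}$. The second key step is that, conditional on this event and on $T_N(x):=V_1+\cdots+V_N=k$, the first $k$ digits of $\zeta(x)$ are the deterministic substring $q_{S_1}\cdots q_{S_k}$ selected by the $V_i$'s, while $Q_{S_{k+1}},Q_{S_{k+2}},\ldots$ remain fresh independent $\mathrm{Bernoulli}(1/2)$. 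Therefore, conditionally, $\zeta(x)$ is uniform on an interval of length $2^{-k+1}$, which gives the bound $\Prob{|\zeta(x)-z_1|\leq r\mid Q_1,\ldots,Q_N,\,T_N(x)=k}\leq \min(1,2^kr)=2^{k-N}$.

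Summing this over $k$ with $T_N(x)\sim\mathrm{Binomial}(N,x)$ and the binomial theorem,
\[
\Prob{|\zeta(x)-z_1|\leq r,\ |\zeta(1)-z_2|\leq r}\leq 2^{-N}\sum_{k=0}^N\binom{N}{k}x^k(1-x)^{N-k}\,2^{k-N}=2^{-2N}(1+x)^N,
\]
which equals $r^{2-\log(1+x)/\log 2}$ since $r=2^{-N}$. A standard comparison of arbitrary small $r$ with the nearest dyadic scale extends this estimate from $r=2^{-N}$ to all $r\in(0,1)$ at the cost of a bounded multiplicative constant, after which the mass distribution principle completes the proof. Reassuringly, the exponent matches the local-dimension value computed at the binary-rational points in Theorem~\ref{thr:BC-dim}, hinting that the bound is in fact sharp.

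The main technical obstacle is the rigorous justification of the reduction $\{|\zeta(1)-z_2|\leq r\}\subset\{Q_i=q_i,\,i\leq N\}$. For generic $z_2$ a single flipped $Q_i$ could in principle be compensated by a low-order tail, and for binary-rational $z_2$ two admissible expansions coexist; but each source of error contributes only a bounded multiplicative factor and hence does not perturb the exponent. A clean workaround is to pass to a slightly sparser sequence of radii such as $r_N=\tfrac{1}{3}\cdot 2^{-N}$, on which the reduction becomes exact. The rest of the argument is then straightforward binomial bookkeeping.
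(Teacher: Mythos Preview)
Your argument is correct, and the route is genuinely different from (and in fact more economical than) the paper's. The paper first invokes Feng's theorem to know that $\mu_{BC,1/2}^{(x)}$ is exact dimensional, then for $\mu$-almost every $z=(\zeta(x),\zeta(1))$ uses Borel--Cantelli to extract a sparse sequence of scales $r=2^{-\tau_k}$ along which the dyadic box containing $z$ can be controlled, and finally computes $\mu(\tilde z+[0,2^{-\tau_k}])\leq 2^{-\tau_k}\E\big[2^{-(\tau_k-T_{\tau_k})}\big]=2^{-\tau_k}((1+x)/2)^{\tau_k}$; exact dimensionality is what legitimises passing to the subsequence. You bypass both ingredients by establishing the uniform Frostman-type bound $\mu_{BC,1/2}^{(x)}(B_r(z))\leq C\,r^{\,2-\log(1+x)/\log 2}$ for \emph{all} $z$ and small $r$, from which the lower local dimension is at least $2-\log(1+x)/\log 2$ everywhere, hence $\dim_{\mathrm H}\mu_{BC,1/2}^{(x)}\geq 2-\log(1+x)/\log 2$ without any appeal to exact dimensionality. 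The core computation---$\E\,2^{T_N(x)-N}=2^{-N}(1+x)^N$ with $T_N(x)\sim\mathsf{Bin}(N,x)$---is the same in both proofs.

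Two small points of exposition. First, the sentence ``modulo a set of measure zero, $\{|\zeta(1)-z_2|\leq r\}$ reduces to $\{Q_i=q_i,\ i\leq N\}$'' is not literally true: for $r=2^{-N}$ the ball $[z_2-r,z_2+r]$ has the same length as a level-$N$ dyadic interval but is generically not aligned with one. The clean statement is that $\{|\zeta(1)-z_2|\leq 2^{-N}\}$ is contained in the union of at most two events of the form $\{Q_i=q_i^{(j)},\ i\leq N\}$, and your conditional bound applies to each summand; this gives the same exponent with an extra factor $2$. Second, when you condition only on $T_N(x)=k$ (rather than on the full vector $(V_1,\ldots,V_N)$), the first $k$ digits of $\zeta(x)$ are not deterministic but a random selection from $q_1,\ldots,q_N$; this is harmless, since conditionally on $(Q_1,\ldots,Q_N,V_1,\ldots,V_N)$ the tail $\sum_{j>k}2^{-(j-1)}Q_{S_j(x)}$ is a fresh uniform on $[0,2^{-k+1}]$, and averaging a density bounded by $2^{k-1}$ over the $V$'s preserves the bound $\P(|\zeta(x)-z_1|\leq r\mid Q_1,\ldots,Q_N)\leq 2^{-N}(1+x)^N$. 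With these clarifications your proof goes through as written.
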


We close the section on perpetuities by referring the reader to the last subsection of the Appendix where two further examples related to perpetuities are discussed in brief.



\section{Other examples}
\label{sec:other-examples}

\begin{example}
  \label{ex:degenerate}
  Assume that $f(x)\equiv Q$ for some random variable $Q$ whose
  distribution we denote by $\P_Q$. Note that this case corresponds to
  a degenerate perpetuity with $M=0$ a.s. Let us assume that
  $\mathcal{X}$ is $[0,\infty)$ with $\mu$ being the Lebesgue
  measure. The process $\mathcal{P}$ can be regarded as a marked
  Poisson process on $[0,\infty)^2$ with unit intensity and the marks
  being i.i.d. random variables $(Q_k)$ with distribution $\P_Q$ which
  are also independent of positions of the points in
  $\mathcal{P}$. Let $(t_k,x_k)_{k\in\mathbb{Z}}$ be the set of lower
  left records of $\mathcal{P}$ such that
  $(t_0,x_0)$ and $(t_1,x_1)$ are separated by the bisectrix
  $x=t$. Using this notation, the process $(\zeta(x))_{x>0}$ can be
  written as follows
$$
    \zeta(x)=Q_{\inf\{n\in\mathbb{Z}:x_n\leq x\}}, \quad x>0.
$$
In other words, $\zeta(x)=Q_i$ if $x\in(x_{i-1},x_i]$,
$i\in\mathbb{Z}$.  The jump points $(x_k)_{k\in\mathbb{Z}}$ form a
scale invariant Poisson point process, see, for example,
\cite[Prop.~2]{gned08}. After the exponential time change, we obtain a
process $(\tilde{\zeta}(s))_{s\in\R}=(\zeta(e^s))_{s\in\R}$ that takes
i.i.d.~values distributed as $Q$ between the points of a standard
two-sided Poisson process on $\R$ with unit intensity.
\end{example}

\begin{example}
  Assume that $f(z)=\max(1,e^\xi z)$, where $\E\xi<0$. Then
  $L_f=\min(1,e^\xi)$. The backward iterations converge a.s. to a
  random variable $e^Y$ such that $Y$ satisfies the Lindley equation
  $Y\dsim\max(0,\xi+Y)$ from queuing theory. It is well known that
  $Y$ is distributed as
  \begin{displaymath}
    \sup_{j\geq 0} \sum_{i=1}^j \xi_i,
  \end{displaymath}
  where $(\xi_i)_{i\in\NN}$ are i.i.d.~copies of $\xi$.  In other
  words, $\zeta(x)$ is the supremum of a random walk with negative
  drift. For the corresponding process $(\zeta(x))_{x\in(0,1]}$ we
  have the representation
  \begin{displaymath}
    \zeta(x)=\sup_{j\geq 0} \sum_{i=1}^j
      \xi_i\one_{\{U_i\geq x\}},\quad x\in(0,1],
  \end{displaymath}	
  where $(U_i)_{i\in\NN}$ are i.i.d.~uniform on $[0,1]$ which are also
  independent of $(\xi_j)_{j\in\NN}$.
\end{example}

\begin{example}
  \label{ex:cont-frac}
  Let $f(z)=1/(z+\xi)$, where $\xi$ is a positive random variable and
  $z\geq 0$. The iterations produce random continued fractions, see,
  for example, \cite{let:ses83}. The Lipschitz constant of $f$ is
  $L_f=\xi^{-2}$, so \eqref{eq:cond1} and \eqref{eq:cond2} are
  fulfilled if $\E \xi^{-2}<\infty$ and $\E \log\xi>0$.

  If $\xi$ is Gamma distributed, then the
  backwards iterations converge almost surely, and the limit
  $\zeta(x)$ has the inverse Gaussian distribution. Therefore, one
  obtains a stochastic process whose all univariate marginals are inverse
  Gaussian.

  If $\xi$ takes values from $\NN$, then it is possible to uniquely
  recover the sequence of iterations from the limit, so
  Theorem~\ref{thr:Markov} yields the Markov property of the process
  $(\zeta(x))_{x>0}$. 
  
\end{example}


\section{Concluding remarks}
\label{sec:concluding-remarks}

Most of the presented results (with appropriate amendements) hold for
Lipschitz functions taking values in an arbitrary Polish space; in this
case, one obtains set-indexed functions with values in this Polish space.



It is possible to amend the sieving construction in various ways. For
instance, let $\sP$ be the Poisson process $\{(x_i,f_i)\}$ in $\R^d$
marked by i.i.d.~random Lipschitz functions satisfying
\eqref{eq:cond1} and \eqref{eq:cond2}. For each point $x\in\R^d$, order
the points $(x_i)_{i\in\NN}$ of the process according to their
distance to $x$ and take the backward iterations of the corresponding
functions. This results in a random field indexed by $\R^d$ whose
one-dimensional distributions are all identical and which is also
scale invariant.

For yet another alternative construction, let $\sP$ be the Poisson
process $\{(s_i,t_i,f_i)\}$ in $\R\times\R_+\times\sG$. Fix $a>0$, and
for each $x\in\R$ consider the points $(s_i,t_i)$ such that
$|x-s_i|\leq at_i$. Order these points by increasing second
coordinates $t_i$ and let $\zeta(x)$ be the limit of the backwards
iterations of the corresponding functions.

Finally, let us make a concluding remark that a different notion 
of probabilistic sieving related to so-called generalized leader-election procedures
has been recently considered in \cite{Alsmeyer+Kabluchko+Marynych:2016} and \cite{Alsmeyer+Kabluchko+Marynych:2017}.


\section*{Acknowledgements}

The authors have benefited from discussions with a number of
colleagues, especially with Gerold Alsmeyer, Bojan Basrak, Andreas
Basse-O'Connor, Bernardo D'Auria, Lutz D\"umbgen, De-Jun Feng, Sergei
Foss, Alexander Iksanov, Zakhar Kabluchko, Sebastian Mentemeier and
Sid Resnick.  IM was supported by the Swiss National Science
Foundation Grants No 200021\_175584 and IZHRZ0\_180549 and by the
Alexander von Humboldt Foundation. We thank two anonymous referees 
for a number of valuable suggestions, pointers to literature and for detecting 
several oversights in the earlier versions of the paper.

\bibliographystyle{abbrv}
\bibliography{AMIM}

\newpage

\section*{Appendix}

In the Appendix we collect the promised proofs and examples skipped in the main text.

\subsection*{Proof of Proposition \ref{prop:generators}}

  Let us prove \eqref{eq:24}, the proof of \eqref{eq:25} is
  similar. Order all points of $\sP_{[0,x+\delta]}$ according to their
  arrival times $t_i$. Formula \eqref{eq:23} implies that, conditionally
  on $\{\zeta(e^t)=z\}$, the random variable $\zeta(e^{t+\delta})$ is
  distributed as the following a.s.~limit
$$
  \lim_{n\to\infty} (g^{z})^{1\uparrow (\tau_1-1)}\circ f_1
  \circ (g^{z})^{\tau_1\uparrow (\tau_2-1)}\circ
  f_2\circ\cdots\circ (g^{z})^{\tau_{n-1}\uparrow (\tau_n-1)}\circ f_n (z_0),
$$
where 
$$
\tau_0:=0,\quad \tau_n:=\min\{i>\tau_{n-1}:\; x_i\in(e^t,e^{t+\delta}]\},\quad n\in\NN,
$$
and $(f_k)_{k\in\NN}$ are i.i.d. copies of $f$ which are independent
of everything else. Note that $(\tau_n-\tau_{n-1})_{n\in\NN}$ are
i.i.d. and
$$
\P\{\tau_1=j\}=e^{-\delta(j-1)}(1-e^{-\delta}),\quad j\in\NN.
$$  
On the first step we show that it is possible to neglect
$f_2,f_3,\ldots$, which have been inserted after $g_{\tau_1}^z$, that
is
\begin{multline}\label{eq:generator_proof1}
\lim_{\delta\downarrow 0}\frac{1}{\delta}\E \Big(h\Big((g^{z})^{1\uparrow (\tau_1-1)}\circ f_1\circ (g^{z})^{\tau_1\uparrow (\tau_2-1)}\circ f_2\circ\cdots\circ (g^{z})^{\tau_{n-1}\uparrow (\tau_n-1)}\circ f_n \circ \cdots(z_0)\Big)\\
-h\Big((g^{z})^{1\uparrow (\tau_1-1)}\circ f_1\circ(g^{z})^{\tau_1\uparrow \infty}(z_0)\Big)\Big)=0.
\end{multline}
Using the fact that $h^{\prime}$ is continuous, hence, bounded on the
compact set ${\rm supp}\,Z_{\infty}$, we derive using the mean value
theorem for differentiable functions
\begin{multline*}
  \Big|h\Big((g^{z})^{1\uparrow (\tau_1-1)}\circ f_1\circ (g^{z})^{\tau_1\uparrow (\tau_2-1)}\circ f_2\circ\cdots\circ (g^{z})^{\tau_{n-1}\uparrow (\tau_n-1)}\circ f_n \circ \cdots(z_0)\Big)\\
-h\Big((g^{z})^{1\uparrow (\tau_1-1)}\circ f_1\circ(g^{z})^{\tau_1\uparrow \infty}(z_0)\Big|
\leq {\rm const}\cdot c_f^{\tau_2}.
\end{multline*}
Since $\delta^{-1}\E c_f^{\tau_2}=\delta^{-1}(\E c_f^{\tau_1})^2 \to 0$ as $\delta\downarrow0$, the generating operator is given by 
\begin{align}
  (\mathsf{A}_\uparrow h)(z)&=\lim_{\delta\downarrow 0}
  \frac{1}{\delta}\Big[\E h\Big((g^{z})^{1\uparrow (\tau_1-1)}\circ f_1\circ(g^{z})^{\tau_1\uparrow \infty}(z_0)\Big)
  - h(z)\Big]\nonumber\\
  &=\lim_{\delta\downarrow 0}
  \frac{1}{\delta}\Big[\E h\Big((g^{z})^{1\uparrow (\tau_1-1)}\circ f\circ ((g^z)^{1\uparrow (\tau_1-1)})^{-1}(z)\Big)
  - h(z)\Big]\nonumber\\
  &=\lim_{\delta\downarrow 0}
  \frac{1}{\delta}\sum_{k=0}^{\infty}\Big[\E h\Big((g^{z})^{1\uparrow k}\circ f\circ ((g^z)^{1\uparrow k})^{-1}(z)\Big)
  - h(z)\Big]e^{-\delta k}(1-e^{-\delta}),\label{eq:generator_proof2}
\end{align}
where for the second equality we have used that 
\begin{displaymath}
  (g^{z})^{\tau_1\uparrow \infty} (z_0)
  =((g^z)^{1\uparrow (\tau_1-1)})^{-1}(z).
\end{displaymath}
Using the inequality
$$
\E h\Big((g^{z})^{1\uparrow k}\circ f\circ ((g^z)^{1\uparrow k})^{-1}(z)\Big)
  - h(z)\leq {\rm const}\cdot c_f^k,\quad k\geq 0,
$$
by the Lebesgue dominated convergence theorem we can swap the sum and
the limit on the right-hand side of \eqref{eq:generator_proof2}. This
completes the proof of \eqref{eq:24}.

\subsection*{Markov processes generated by Bernoulli convolutions}
\label{sec:mark-proc-gener-1}

As we have mentioned in Section~\ref{sec:markov-property}, the
process $(\zeta(x))_{x>0}$ generated by the mapping $f(x)=\lambda x+Q$
with $\lambda\in(0,1/2)$ and $Q$ equally likely taking the values $0$
and $1$, is Markov both in forward and reverse time.

In order to calculate its generating operator, note that each
$z\in{\rm supp}\,Z_\infty\subset [0,(1-\lambda)^{-1}]$ corresponds to
a sequence $(q_{n}^{z})_{n\in\NN}$ from $\{0,1\}^\NN$ such that
$g_n^z(x)=\lambda x+q_n^{z}$ and
\begin{equation}
  \label{eq:bernoulli_convolutions_generators}
  z=(g^z)^{1\uparrow \infty}(z_0)=\sum_{k=1}^{\infty}\lambda^{k-1}q_k^z.
\end{equation}
Direct calculations yield
\begin{displaymath}
  g_{1}^{z}\circ\cdots\circ g_{k}^{z}\circ f
  \circ (g_{k}^z)^{-1}\cdots (g_{1}^z)^{-1}(x)
  =(1-\lambda)\sum_{i=1}^k \lambda^{i-1}q_i^z+\lambda^k Q +\lambda x,\quad x\in\R.
\end{displaymath}
By \eqref{eq:24},
\begin{displaymath}
  (\mathsf{A}_\uparrow h)(z)=
  \sum_{k=0}^\infty
  \Big[\E h(-(1-\lambda)\hat{z}_k +\lambda^kQ
  +z)-h(z)\Big],
\end{displaymath}
where $Q$ equally likely takes values $0,1$ and 
\begin{displaymath}
  \hat{z}_k:= \sum_{i=k+1}^\infty \lambda^{i-1}q_i^z,\quad k\geq 0.
\end{displaymath}
If $h(z)=z$, then 
\begin{displaymath}
  (\mathsf{A}_\uparrow h)(z)=
  -(1-\lambda)\sum_{k=0}^\infty
  \hat{z}_k+\frac{\E Q}{1-\lambda}
  =-(1-\lambda)\sum_{i=1}^\infty i\lambda^{i-1} q_i^z+\frac{\E Q}{1-\lambda}.
\end{displaymath}
A curious observation is that the sum in the last formula is the
derivative of $\lambda\mapsto \lambda z(\lambda)$ in
\eqref{eq:bernoulli_convolutions_generators}.  The generating operator
in the reverse time is given by
\begin{displaymath}
  (\mathsf{A}_\downarrow h)(z)=\sum_{k=0}^\infty
  \Big[h(-(1-\lambda^{-1})\hat{z}_{k-1} -\lambda^{k-2}q_k^z
  +z)-h(z)\Big].
\end{displaymath}

\subsection*{Proof of Theorems \ref{thr:BC-dim} and \ref{thr:BC-dim-bis}}

\begin{proof}
  First of all, note that we may replace the Euclidean ball in
  \eqref{eq:17} with the $\ell_\infty$-ball. Further, for any sequence
  $r_n\downarrow 0$, there exist a sequence $(k_n)\uparrow\infty$ of
  integers such that $2^{-k_n}\leq r_n< 2^{-k_n+1}$, and by the
  standard sandwich argument we see that it suffices to prove
  \eqref{eq:17} along the sequence $r_k=2^{-k}$ as $k\to\infty$.
  
  Order the points $(t_i,x_i,f_i)_{i\in\NN}$ of the point process
  $\sP_{[0,1]}$ so that $t_1\leq t_2\leq \cdots$. Recall the notation
  \begin{displaymath}
    T_n(x)=\sum_{j=1}^{n}\one_{\{x_j\leq x\}}\quad\text{and}\quad S_n(x)=\inf\{k\in\NN: T_k(x)=n\}, \quad n\in\NN.
  \end{displaymath}
  Since $x$ is fixed, in the following the argument $x$ is omitted. 

  Let $z_1=\sum_{k=1}^{m}\gamma_k/2^{k-1}$, $\gamma_m=1$, and
  $z_2=\sum_{k=1}^{n}\gamma'_k/2^{k-1}$, $\gamma_n^{\prime}=1$, be the
  binary expansions of $z_1$ and $z_2$, respectively. By the
  assumption, $\gamma_1\gamma_2\ldots\gamma_m$ is a substring of
  $\gamma'_1\gamma'_2\ldots\gamma'_n$ and, in particular, $m\leq
  n$. Recalling the representation
  \eqref{eq:representation_neg_binomial} for $\zeta$, we can write
  \begin{displaymath}
    \zeta(x)=\sum_{n=1}^{\infty}\frac{Q_{S_n}}{2^{n-1}}
    \quad\text{and}\quad
    \zeta(1)=\sum_{n=1}^{\infty}\frac{Q_{n}}{2^{n-1}}.
  \end{displaymath}
  For $k>\max(n,m)$, we have
  \begin{align*}
    \mu_{BC,1/2}^{(x)}\left(z+[0,2^{-k}]^2\right)
    &=\Prob{\zeta(x)\in [z_1,z_1+2^{-k}],\zeta(1)\in [z_2,z_2+2^{-k}]}\\
    &=\P\big\{Q_1=\gamma'_1,\dots,Q_n=\gamma'_n,Q_{n+1}=\cdots=Q_{k+1}=0,\\
    &\quad\quad\;\; Q_{S_1}=\gamma_1,\dots,Q_{S_m}=\gamma_m,
    Q_{S_{m+1}}=\cdots=Q_{S_{k+1}}=0\big\}.
  \end{align*}
  Denote the event under the last probability sign by $A$. Since we
  assume $\gamma_m=1$, event $A$ can occur only if
  $\{S_m\leq n\}\cup \{S_{m}>k+1\}$. We proceed by bounding
  $\Prob{A,S_{m}>k+1}$ as follows:
  \begin{align*}
    &\P\{A, S_{m}>k+1\}\\
    &\leq \Prob{S_{m}>k+1,Q_{n+1}=\cdots=Q_{k+1}=0,Q_{S_{m+1}}=\cdots=Q_{S_{k+1}}=0}\\
    &= \Prob{S_{m}>k+1}\left(\frac{1}{2}\right)^{k-n+1}
    \left(\frac{1}{2}\right)^{k-m+1}
    \leq m\Prob{S_1>\frac{k+1}{m}}\left(\frac{1}{2}\right)^{k-n+1}
    \left(\frac{1}{2}\right)^{k-m+1}\\
    &=m(1-x)^{(k+1)/m}\left(\frac{1}{2}\right)^{k-n+1}
    \left(\frac{1}{2}\right)^{k-m+1}
    =\mathcal{O}\left(\frac{(1-x)^{1/m}}{4}\right)^k
    \quad \text{as }\; k\to\infty.
  \end{align*}
  In order to calculate $\Prob{A,S_{m}\leq n}$, note that 
  $\{S_m\leq n\}=\{T_n\geq m\}$. By definition, $S_{T_n}\leq n$ and
  $S_{T_n+1}> n$. Therefore,
  \begin{align*}
    \Prob{A,S_{m}\leq n}
    =\sum_{l=m}^n\sum_{j=l}^n
    \Prob{S_m\leq n,T_n=l,S_l=j,S_{l+1}>n,B_{m,n},C_{n,k}}
  \end{align*}
  where 
  \begin{align*}
    B_{m,n}&=\big\{Q_1=\gamma'_1,\dots,Q_n=\gamma'_n,
    Q_{S_1}=\gamma_1,\dots,Q_{S_m}=\gamma_m,
    Q_{S_{m+1}}=\cdots=Q_{S_{T_n}}=0\big\},\\
    C_{n,k}&=\big\{Q_{n+1}=\cdots=Q_{k+1}=0,Q_{S_{T_n+1}}=\cdots=Q_{S_{k+1}}=0\big\}. 
  \end{align*}
  Note that $S_{l+i}=S_l+S'_i$, $i\geq1$, where $(S'_i)_{i\in\NN}$ is
  a distributional copy of the random walk $(S_i)_{i\in\NN}$. Then
  \begin{multline*}
    \Prob{S_m\leq n,T_n=l,S_l=j,S_{l+1}>n,B_{m,n},C_{n,k}}\\
    =\Prob{C_{n,k},S_{l+1}>n|T_n=l,S_l=j,B_{m,n}}
    \Prob{S_m\leq n,T_n=l,S_l=j,B_{m,n}},
  \end{multline*}
 and further
  \begin{align*}
    &\hspace{-1cm}\Prob{C_{n,k},S_{l+1}>n|T_n=l,S_l=j,B_{m,n}}\\
    &=\Prob{Q_{n+1}=\cdots=Q_{k+1}=0,Q_{j+S'_1}=\cdots=Q_{j+S'_{k-l+1}}=0,j+S'_1>n}\\
    &=\Prob{Q_{n+1}=\cdots=Q_{k+1}=0,Q_{n+S'_1}=\cdots=Q_{n+S'_{k-l+1}}=0}
    \Prob{j+S'_1>n},
  \end{align*}
  where the last equality relies on the memoryless property of the
  geometrically distributed $S'_1$.  Let $N$ be binomially distributed
  $\mathsf{Bin}(k-n+1,x)$.  Then
  \begin{align*}
    \P\big\{Q_{n+1}=&\cdots=Q_{k+1}=0,Q_{n+S'_1}=\cdots=Q_{n+S'_{k-l+1}}=0\big\}\\
    &=\left(\frac{1}{2}\right)^{k-n+1}
    \E \left(\frac{1}{2}\right)^{k-l+1-N}\\
    &=\left(\frac{1}{2}\right)^{k-n+1}
    \sum_{i=0}^{k-n+1} \binom{k-n+1}{i}x^i(1-x)^{k-n+1-i}
    \left(\frac{1}{2}\right)^{k-l+1-i}\\
    &=\left(\frac{1}{2}\right)^{k-l+1}
    \left(\frac{1+x}{2}\right)^{k-n+1}.
  \end{align*}
  Thus, 
  \begin{multline*}
    \Prob{A,S_{m}\leq n}
    =  \left(\frac{1}{2}\right)^{k}
    \left(\frac{1+x}{2}\right)^{k}\\
    \sum_{l=m}^n\sum_{j=l}^n (1-x)^{n-j}
    \left(\frac{1}{2}\right)^{-l+1}\left(\frac{1+x}{2}\right)^{-n+1}
    \Prob{S_m\leq n,T_n=l,S_l=j,S_{l+1}>n,B_{m,n}}.
  \end{multline*}
  Note that the double sum does not depend on $k$. Hence, 
  \begin{displaymath}
    \mu_{BC,1/2}^{(x)}\left(z+[0,2^{-k}]^2\right)
    ={\rm const} \left(\frac{1}{2}\right)^{k}
    \left(\frac{1+x}{2}\right)^{k}
    +\mathcal{O}\left(\frac{(1-x)^{1/m}}{4}\right)^k,
  \end{displaymath}
  where the constant does not depend on $k$ (but might depend on $m$ and $n$). The same expression holds for $\mu_{BC,1/2}^{(x)}\left(z+[-2^{-k},0]^2\right)$. 

  Furthermore,
  \begin{align*}
    \mu_{BC,1/2}^{(x)}&\left(z+[-2^{-k},0]\times[0,2^{-k}]\right)\\
    &=\P\big\{Q_1=\gamma'_1,\dots,Q_n=\gamma'_n,
    Q_{n+1}=\cdots=Q_{k+1}=0,\\
    &\quad\quad\;\;  Q_{S_1}=\gamma_1,\dots,Q_{S_m-1}=\gamma_{m-1},Q_{S_m}=0,
      Q_{S_{m+1}}=\cdots=Q_{S_{k+1}}=1\big\}.
  \end{align*}
  The event under probability sign occurs only if $S_{m+1}\leq n$ or
  $S_{m+1}>k+1$. By taking the double sum over $T_n=l$ and $S_l=j$ for
  $m+1\leq l\leq n$ and $l\leq j\leq n$ as above, we arrive at 
  \begin{displaymath}
    \mu_{BC,1/2}^{(x)}\left(z+[-2^{-k},0]\times[0,2^{-k}]\right)
    \leq {\rm const}\cdot \left(\frac{1-x}{4}\right)^k,
  \end{displaymath}
  where the constant does not depend on $k$. Furthermore, 
  $\mu_{BC,1/2}^{(x)}\left(z+[0,-2^{-k}]\times[-2^{-k},0]\right)$ is
  bounded by the same expression. Thus,
  \begin{displaymath}
    \mu_{BC,1/2}^{(x)}\left(z+[-2^{-k},2^{-k}]^2\right)
    =c \left(\frac{1}{2}\right)^{k}
    \left(\frac{1+x}{2}\right)^{k}
    +\mathcal{O}\left(\frac{(1-x)^{1/m}}{4}\right)^k.
  \end{displaymath}
  Finally, \eqref{eq:17} yields \eqref{eq:26}.   
\end{proof}

\begin{proof}[Proof of Theorem \ref{thr:BC-dim-bis}]
  %
  All points $z:=(z_1,z_2)$ in the support of $\mu_{BC,1/2}^{(x)}$ can
  be represented as binary expansions
  $z_1=\sum_{k=1}^{\infty}\gamma_k/2^{k-1}$ and
  $z_2=\sum_{k=1}^{\infty}\gamma'_k/2^{k-1}$, where the sequences
  $\gamma:=(\gamma_k)_{k\in\NN}$ and $\gamma'=(\gamma'_n)_{n\in\NN}$
  in $\{0,1\}^\NN$ are such that $\gamma$ is a subsequence of
  $\gamma'$. For almost all $z$, there is an infinite increasing
  sequence $(\tau_k)_{k\in\NN}$ of natural numbers such that
  $\gamma_{\tau_k+1}=\gamma'_{\tau_k+1}=0$ and
  $\gamma_{\tau_k+2}=\gamma'_{\tau_k+2}=1$ for all $k\geq1$. This
  follows from the Borel--Cantelli lemma applied to the sequence of
  independent events 
  \begin{displaymath}
    B_n:=\{Q_{Y_n}=0,Q_{Y_n+1}=1,Q_{S_{Y_n}}=0,Q_{S_{Y_n+1}}=1\},\quad
    n\geq 1,
  \end{displaymath}
  where $Y_1=1$, and $Y_{n+1}=S_{Y_n+1}+1$, $n\geq1$.  Note that the
  sequence $(\tau_k)_{k\in\NN}$ is not random, it is determined by the
  sequences $\gamma$ and $\gamma'$.  Given that $\mu_{BC,1/2}^{(x)}$
  is exact dimensional and the limit in \eqref{eq:17} exists, it is
  possible to take the limit along $r_k=2^{-\tau_k}$, $k\in\NN$.

  Consider $\tilde{z}:=(\tilde{z}_1 ,\tilde{z}_2)$ with
  \begin{displaymath}
    \tilde{z}_1:=\sum_{j=1}^{\tau_k} 2^{-(j-1)} \gamma_j
    \quad \text{and}\quad  
    \tilde{z}_2:=\sum_{j=1}^{\tau_k} 2^{-(j-1)} \gamma'_j.
  \end{displaymath}
  Then 
  \begin{displaymath}
    z+[-2^{-(\tau_k+1)},2^{-(\tau_k+1)}]\subset 
    \tilde{z}+[0,2^{-\tau_k}]\subset 
    z+[-2^{-\tau_k},2^{-\tau_k}],
  \end{displaymath}
  where we used that $\gamma_{\tau_k+2}=\gamma'_{\tau_k+2}=1$. 
  Therefore, it suffices to consider 
  \begin{multline*}
    \mu_{BC,1/2}^{(x)}(\tilde{z}+[0,2^{-\tau_k}])\\
    =\P\big\{Q_i=\gamma'_i, i=1,\dots,\tau_k,\;    
      \gamma'_{S_j}=\gamma_j,j=1,\dots,T_{\tau_k},
      Q_{S_l}=\gamma_{S_l},l=T_{\tau_k}+1,\dots,\tau_k\big\}.
  \end{multline*}
  Note that $T_{\tau_k} :=\sum_{j=1}^{\tau_k}\one_{\{x_j\leq x\}}$ has the
  binomial distribution $\mathsf{Bin}(\tau_k,x)$, so that
  \begin{multline*}
    \mu_{BC,1/2}^{(x)}(\tilde{z}+[0,2^{-\tau_k}])
    \leq \P\big\{Q_i=\gamma'_i, i=1,\dots,\tau_k,
      Q_{S_l}=\gamma_{S_l},l=T_{\tau_k}+1,\dots,\tau_k\big\}\\
    =\left(\frac{1}{2}\right)^{\tau_k}
    \E\left(\frac{1}{2}\right)^{\tau_k-T_{\tau_k}}
    =\left(\frac{1}{2}\right)^{\tau_k}\left(\frac{1+x}{2}\right)^{\tau_k}.
  \end{multline*}
  The conclusion follows from \eqref{eq:17}. 
\end{proof}

\subsection*{Further examples related to perpetuities}
\label{sec:other-perpetuities}

\begin{example}
  Assume that $Q$ is standard normal and $M=\lambda\in(0,1)$ is
  constant. Then $\zeta(x)$, $x>0$, has univariate Gaussian marginals,
  and its covariance is given by 
$$
\E (\zeta(x)\zeta(y))=\frac{x}{(1-\lambda)(y+\lambda x)}.
$$  
 By time change $x=e^s$,
  we arrive at a centred stationary process $\tilde\zeta(s)$,
  $s\in\R$, with univariate Gaussian marginals and covariance
  \begin{displaymath}
    \E(\tilde\zeta(0)\tilde\zeta(s))=\frac{1}{(1-\lambda)(e^{|s|} +\lambda)}. 
  \end{displaymath}
  The bivariate distributions of this process are no longer Gaussian,
  see Figure~\ref{fig:normal}.
 
\end{example}

\begin{figure}[!t]\label{fig:normal}
    \begin{center}
      \includegraphics[scale=0.25]{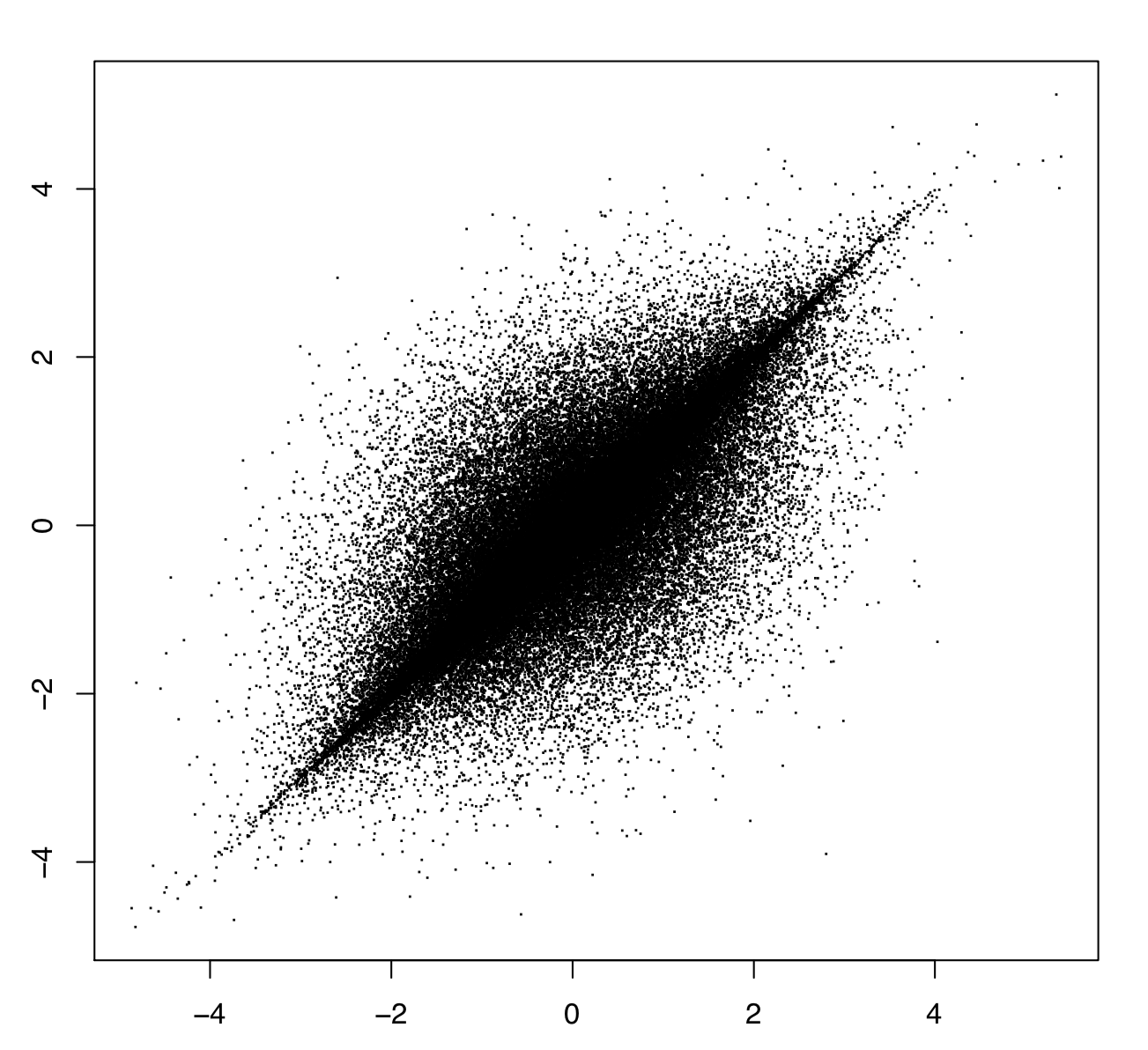}
      \caption{A simulation of $(\zeta(0.7),\zeta(1))$ for normal $Q$
        and $\lambda=1/2$.}
    \end{center}
  \end{figure}	

\begin{example}
  Let $M=Q$ for the standard uniform $Q$. In this case, $\zeta(x)$ for
  each $x>0$ follows the Dickman distribution, see
  e.g. \cite{pen:wad04}. While the obtained stochastic process has all
  univariate Dickman marginals, it does not have independent
  increments like the Dickman process constructed using the infinite
  divisibility property of the Dickman law, see \cite{cov09}.  In our
  case we have the representation
  \begin{displaymath}
    \zeta(x)=\sum_{n=1}^\infty Q_1^{\one_{\{U_1\leq x\}}}\cdots
    Q_{n-1}^{\one_{\{U_{n-1}\leq x\}}} Q_n\one_{\{U_n\leq x\}}.
  \end{displaymath}
\end{example}

\end{document}